\newcommand{\cris}{\mathrm{cris}}
\newcommand{\Fil}{{\mathrm{Fil}}}
\newcommand{\Ext}{\mathrm{Ext}}
\newcommand{\PGam}{{\Phi \Gamma}}
\newcommand\Hom{\mathrm{Hom}}
\def\st{\mathrm{st}}
\def\dR{\mathrm{dR}}
\def\rig{\mathrm{rig}}
\def\cyc{\mathrm{cyc}}
\def\gr{\mathrm{gr}}
\def\log{\mathrm{log}}
\def\GL{\mathrm{GL}}
\def\Max{\mathrm{Max}}
\def\max{\mathrm{max}}
\def\BX{\mathbf{X}}
\def\BB{\mathbf{B}}
\def\BZ{\mathbf{Z}}
\def\BD{\mathbf{D}}
\def\BQ{\mathbf{Q}}
\def\BN{\mathbf{N}}
\def\BF{\mathbf{F}}
\def\BA{\mathbf{A}}
\def\BC{\mathbf{C}}
\def\BR{\mathbf{R}}
\def\RE{\mathbf{E}}
\def\RA{\mathbf{A}}
\def\RW{\mathbf{W}}
\def\RB{\mathbf{B}}
\def\CL{\mathcal{L}}
\def\CD{\mathcal{D}}
\def\CV{\mathcal{V}}
\def\CF{\mathcal{F}}
\def\fo{\mathfrak{o}}
    \theoremstyle{plain}
    \newtheorem{thm}{Theorem}[section]
    \newtheorem{lem}[thm]{Lemma}
    \newtheorem{prop}[thm]{Proposition}
    \theoremstyle{definition}
    \newtheorem{defn}[thm]{Definition}
    \theoremstyle{remark}
    \newtheorem {rem}[thm]{Remark}
    \numberwithin{equation}{section}
\begin{document}

\title{A generalization of Colmez-Greenberg-Stevens formula}
\author{Bingyong Xie \footnote{This paper is supported by
Science and Technology Commission of Shanghai Municipality (grant
no. 13dz2260400) and the National Natural Science Foundation of
China (grant no. 11671137).}
\\ \small Department of Mathematics, East China Normal University,
Shanghai, China \\ \small byxie@math.ecnu.edu.cn}

\date{}
\maketitle

\begin{abstract} In this paper we study the derivatives of Frobenius
and the derivatives of Hodge-Tate weights for families of Galois
representations with triangulations. We give a generalization of the
Fontaine-Mazur $\CL$-invariant and use it to build a formula which
is a generalization of the Colmez-Greenberg-Stevens formula.
\end{abstract}

Key words: Frobenius, Hodge-Tate weight, $\CL$-invariant.

MSC(2010) classification: 11F80, 11F85.


\section{Introduction}

In their remarkable paper \cite{MTT}, Mazur, Tate and Teitelbaum
proposed a conjectural formula for the derivative at $s=1$ of the
$p$-adic $L$-function of an elliptic curve $E$ over $\BQ$ when $p$
is a prime of split multiplicative reduction. An important quantity
in this formula is the so called $\CL$-invariant, namely
$\CL(E)=\log_p(q_E)/v_p(q_E)$ where $q_E\in  \BQ_p^\times$ is the
Tate period for $E$. This conjectural formula was proved by
Greenberg and Stevens \cite{GS} using Hida's families. Indeed, for
the weight $2$ newform $f$ attached to $E$, there exists a family of
$p$-adic ordinary Hecke eigenforms containing $f$. A key formula
they proved is \begin{equation} \CL(E)= -2
\frac{\alpha'(f)}{\alpha(f)} \label{eq:L-elliptic}
\end{equation} where $\alpha$ is the function of $U_p$-eigenvalues of
the eigenforms in the Hida family. On the other hand, they showed
that $-2\frac{\alpha'(f)}{\alpha(f)}$ is equal to $\frac{L'_p(f,
1)}{L(f,1)}$. Combining these two facts they obtained the
conjectural formula.

In this paper we will focus on (\ref{eq:L-elliptic}) which was later
generalized by Colmez \cite{Cz2010} to the non-ordinary setting. We
state Colmez's result below.

\begin{thm}\label{thm:colmez} $($\cite{Cz2010}$)$ Suppose that, at each closed point $z$ of $\mathrm{Max}(S)$
one of the Hodge-Tate weight of $\CV_z$ is $0$, and there exists
$\alpha\in S$ such that
$(\BB_{\cris,S}^{\varphi=\alpha}\widehat{\otimes}_S \CV
)^{G_{\BQ_p}}$ is locally free of rank $1$ over $S$. Suppose $z_0$
is a closed point of $\mathrm{Max}(S)$ such that $\CV_{z_0}$ is
semistable with Hodge-Tate weights \footnote{In this paper, the
Hodge-Tate weights are defined to be minus the generalized
eigenvalues of Sen's operators. In particular the Hodge-Tate weight
of the cyclotomic character $\chi_\cyc$ is $-1$.} $0$ and $k\geq 1$.
Then the differential
$$\frac{\mathrm{d}\alpha}{\alpha} - \frac{1}{2} \CL \mathrm{d}\kappa+
\frac{1}{2}\mathrm{d}\delta $$ is zero at $z_0$, where $\CL$ is the
Fontaine-Mazur $\CL$-invariant of $\CV_{z_0}$.
\end{thm}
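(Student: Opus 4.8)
Since the asserted vanishing is at the single point $z_0$, it is equivalent to show that the $1$-form $\frac{\mathrm{d}\alpha}{\alpha}-\tfrac12\CL\,\mathrm{d}\kappa+\tfrac12\mathrm{d}\delta$ (defined near $z_0$, with $\CL$ the fixed $\CL$-invariant of $\CV_{z_0}$) pairs to $0$ with every tangent vector of $\Max(S)$ at $z_0$. So the plan is to reduce to a first-order statement: fixing such a tangent vector, I restrict the family along a morphism $\mathrm{Spec}\,L[\epsilon]\to\Max(S)$ with $\epsilon^2=0$ and $L$ the residue field of $z_0$. This yields a deformation $\widetilde{\CV}$ of $\CV_{z_0}$ over $L[\epsilon]$, together with a generator $\widetilde{x}$ of $(\BB_{\cris,L[\epsilon]}^{\varphi=\widetilde{\alpha}}\widehat{\otimes}\widetilde{\CV})^{G_{\BQ_p}}$ with $\widetilde{\alpha}=\alpha_0(1+a\epsilon)$, and the triangulation equipped with infinitesimal parameters $\kappa'$ and $\delta'$ (the $\epsilon$-parts of the weight $\kappa$ and of the remaining triangulation parameter $\delta$). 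What remains is the numerical identity $a-\tfrac12\CL\kappa'+\tfrac12\delta'=0$ in $L$.

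Next I pass to $(\varphi,\Gamma)$-modules over the Robba ring. Put $D_0:=D^\dagger_{\rig}(\CV_{z_0})$. Since $\CV_{z_0}$ is semistable, non-crystalline, with Hodge--Tate weights $0$ and $k\ge1$, the filtered $(\varphi,N)$-module $\BD_{\st}(D_0)$ has a basis $e_1,e_2$ with $Ne_2=e_1$, $Ne_1=0$, $\varphi e_1=\alpha_0e_1$, $\varphi e_2=p\alpha_0e_2$, and $\Fil^i\BD_{\st}(D_0)=L(e_2-\CL e_1)$ for $1\le i\le k$; this last is the definition of $\CL$. The hypotheses — one Hodge--Tate weight equal to $0$ at every point of $\Max(S)$, and $(\BB_{\cris,S}^{\varphi=\alpha}\widehat{\otimes}\CV)^{G_{\BQ_p}}$ locally free of rank $1$ — force the family to be trianguline near $z_0$ with sub-line $\CR_S(\mathrm{unr}(\alpha))$, where $\mathrm{unr}(\alpha)$ sends $p$ to $\alpha$ and is trivial on $\BZ_p^\times$; thus $\widetilde{x}$ produces a saturated rank-$1$ sub $\CR_{L[\epsilon]}(\mathrm{unr}(\widetilde{\alpha}))\hookrightarrow\widetilde{D}:=D^\dagger_{\rig}(\widetilde{\CV})$ with quotient $\CR_{L[\epsilon]}(\widetilde{\delta_2})$, from whose weight and value at $p$ the numbers $\kappa'$ and $\delta'$ are read off. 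Reducing modulo $\epsilon$, the extension class of $D_0$ lies in $\Ext^1(\CR_L(\delta_2),\CR_L(\delta_1))=H^1(\CR_L(\delta_1\delta_2^{-1}))$, and the decisive structural input is that $\delta_1\delta_2^{-1}$ is one of Colmez's exceptional characters — with the conventions above it is $\chi_\cyc\,x^{k-1}$, for which $H^2(\CR_L(\delta_1\delta_2^{-1}))\cong L$ and hence $\dim_LH^1(\CR_L(\delta_1\delta_2^{-1}))=2$. This is exactly what allows $D_0$ to be both triangulable and non-split of semistable type, and it is in this $2$-dimensional $H^1$ that $\CL$ lives, as the coordinate of the extension class of $D_0$ in the line complementary to the crystalline one.

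The heart of the proof is then a first-order computation in $(\varphi,\Gamma)$-cohomology. Write $\widetilde{x}=x_0+\epsilon x_1$ with $x_0,x_1\in D_0$, $x_0$ spanning $\BD_{\cris}(D_0)=Le_1$. Expanding the two defining relations $\varphi\widetilde{x}=\widetilde{\alpha}\widetilde{x}$ and $\gamma\widetilde{x}=\widetilde{x}$ ($\gamma\in\Gamma$) to first order expresses the number $a$ and the $1$-cocycle classifying the deformation $\widetilde{D}$ (with its triangulation) in terms of $x_0$ and $x_1$; feeding this through the long exact $(\varphi,\Gamma)$-cohomology sequences attached to $0\to\CR(\delta_1)\to D_0\to\CR(\delta_2)\to0$ identifies $a$, $\kappa'$ and $\delta'$ with the coordinates of the image of this cocycle against a distinguished basis of the $2$-dimensional $H^1(\CR_L(\delta_1\delta_2^{-1}))$ — a basis made of a crystalline class and a ``weight-derivative'' class obtained, in Colmez's style, by differentiating $\delta_1\delta_2^{-1}$ along its weight. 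In that basis the change of coordinates to the basis adapted to $\BD_{\st}(D_0)$ carries $\CL$ as its off-diagonal entry — this is where the defining property of $\CL$ (the position of the Hodge line $L(e_2-\CL e_1)$) enters — and unwinding the resulting linear algebra produces precisely $a-\tfrac12\CL\kappa'+\tfrac12\delta'=0$, the factor $2$ being of Greenberg--Stevens type, traceable to the relation $\det(\varphi\mid\BD_{\st}(D_0))=p\,\alpha_0^2$ between the two Frobenius eigenvalues.

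I expect the main obstacle to be exactly this last dictionary: matching the three analytic quantities — the logarithmic derivative $a$ of the Frobenius eigenvalue, the derivative $\kappa'$ of the Hodge--Tate weight, and $\delta'$ — with the coordinates of the deformation class in the exceptional $H^1$, and pinning the relevant transition coefficient down as $\CL$ with the correct sign and normalisation. Controlling the $\kappa'$-term is the most delicate point, because it forces one to relate the first-order variation of the Sen operator (equivalently, of the Hodge--Tate weight) to the behaviour of $x_1$ modulo the de Rham filtration — a dual-exponential computation in families. Once that identification is secured, the reduction to first order and the bookkeeping in the cohomology sequences are routine.
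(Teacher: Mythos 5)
Your outline is essentially Colmez's original strategy rather than this paper's. The paper never proves Theorem \ref{thm:colmez} directly; it deduces it as the $K=\BQ_p$, rank-$2$ case of Theorem \ref{thm:main}, whose proof runs through $B$-pairs rather than through the exceptional $H^1$ of a rank-one $(\varphi,\Gamma)$-module. Concretely, after the same reduction to $S=E[Z]/(Z^2)$ that you make, the paper attaches to the deformation a $1$-cocycle $c_B(W)$ valued in $W_z^*\otimes W_z$ (\S\ref{ss:cocycle-inform}), proves the projection vanishing property and the identification $\pi_{hh}([c])=[\epsilon'_h]$ (Theorem \ref{prop:middle-step}), pushes the cocycle into an auxiliary rank-$3$ filtered module $\mathscr{D}_0$ built from $D_2^*\otimes D_1$, and extracts the linear relation from Lemma \ref{lem:aux}, whose engine is Zhang's explicit cup-product/reciprocity formula (Lemma \ref{lem:zhang}) together with the fact that the relevant rank-$2$ subquotient is the Kummer class $[(p)]+\exp(\vec\CL)$. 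So where you realize $\CL$ as an off-diagonal entry of a change of basis inside the two-dimensional $H^1(\CR_L(\chi_\cyc x^{k-1}))$, the paper realizes it as the obstruction to a class in $H^1(G_K,E)$ dying in $H^2(G_K,E(1))$ after cupping with $[(p)]+\exp(\CL)$. Your route is closer to \cite{Cz2010} and \cite{Pott} and is perfectly viable for the rank-$2$, $K=\BQ_p$ case; the paper's route is what scales to higher rank, general $K$, and non-semisimple Frobenius.

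That said, as a proof your proposal has a genuine gap, and it is exactly the one you flag yourself: the entire content of the theorem is the ``dictionary'' identifying $a$, $\kappa'$ and $\delta'$ with coordinates of the deformation class in a distinguished basis of the exceptional $H^1$, and identifying the transition coefficient to the $\BD_\st$-adapted basis as $\CL$ with the correct sign and normalization (including the provenance of the factor $\tfrac12$). You assert the output of this computation rather than perform it. Two specific points would need to be supplied: (i) the relation between the first-order variation of the Hodge--Tate weight and the position of $x_1$ modulo the de Rham filtration --- in the paper this is handled not by a dual-exponential argument but by the combination of Lemma \ref{lem:varph-gamma-fil}, equation (\ref{eq:Uhh-episilon}) and Lemma \ref{lem:constant-gamma}, which pin down all the coefficients $\gamma_{h,0},\gamma_{h,\tau}$ of the cocycle; and (ii) the step that actually injects the definition of $\CL$ (the line $L(e_2-\CL e_1)=\Fil$) into the cohomological identity --- in the paper this is Lemma \ref{lem:aux} via \cite[Lemma 5.5]{Zhang}. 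Until those are written out, the displayed identity $a-\tfrac12\CL\kappa'+\tfrac12\delta'=0$ is a target, not a conclusion.
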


See \cite{Cz2010} for the precise meanings of $\kappa$ and $\delta$.
Roughly speaking, $\mathrm{d}\delta$ is the derivative of Frobenius,
and $\mathrm{d}\kappa$ is the derivative of Hodge-Tate weights.

The condition that
``$(\BB_{\cris,S}^{\varphi=\alpha}\widehat{\otimes}_S \CV
)^{G_{\BQ_p}}$ is locally free of rank $1$ over $S$'' in Theorem
\ref{thm:colmez} is equivalent to that $\CV$ admits a triangulation
\cite{Cz2008}.
 So, Theorem \ref{thm:colmez} means that the derivatives of
Frobenius and the derivatives of Hodge-Tate weights of a family of
$2$-dimensional representations of $G_{\BQ_p}$ with a triangulation
satisfy a non-trivial relation at each semistable (but
non-crystalline) point.

Colmez's theorem was generalized by Zhang \cite{Zhang} for families
of $2$-dimensional Galois representations of $G_K$ ($K$ a finite
extension of $\BQ_p$) and Pottharst \cite{Pott} who considered
families of (not necessarily \'etale) $(\varphi,\Gamma)$-modules of
rank $2$ instead of families of $2$-dimensional Galois
representations.

In this paper we give a generalization of Colmez's theorem which
includes the above generalizations as special cases.

Fix a finite extension $K$ of $\BQ_p$. What we work with is a family
of $K$-$B$-pair (called $S$-$B$-pair in our context) that is locally
triangulable. We will provide conditions for Fontaine-Mazur
$\CL$-invariant to be defined. Note that, the $\CL$-invariant is now
a vector with component number equal to $[K:\BQ_p]$.

\begin{thm}\label{thm:main}
Let $W$ be an $S$-$B$-pair that is semistable at a point $z\in
\Max(S)$. Suppose that $W$ is locally triangulable at $z$ with the
local triangulation parameters $(\delta_1,\cdots,\delta_n)$. Assume
that for $D_z$, the filtered $E$-$(\varphi,N)$-module attached to
$W_z$, the Fontaine-Mazur $\CL$-invariant $\vec{\mathcal{L}}_{s,t}$
$($see Definition \ref{defn:Fontaine-Mazur}$)$ can be defined for
 $ s, t \in \{1,2,\cdots, n\}$.  Then
$$ \frac{1}{[K:\BQ_p]}\left(\frac{\mathrm{d} \delta_t(p)}{\delta_t(p)} - \frac{\mathrm{d}
\delta_s(p)}{\delta_s(p)}\right)
 + \vec{\CL}_{s,t} \cdot (\mathrm{d}\vec{w}(\delta_t)-\mathrm{d}\vec{w}(\delta_s)) = 0 . $$
Here, $\vec{w}(\delta_i)$ is the Hodge-Tate weight of the character
$\delta_i$.
\end{thm}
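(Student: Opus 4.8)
The argument runs in three stages: reduce to a rank-$2$ family, describe the Fontaine-Mazur $\CL$-invariant as a cohomological coordinate of the extension class defining the triangulation, and then extract the relation by a first-order (dual-number) deformation in the spirit of Greenberg-Stevens. \textbf{Step 1 (reduction to rank $2$).} All three quantities in the formula — the logarithmic derivatives $\mathrm{d}\delta_s(p)/\delta_s(p)$ and $\mathrm{d}\delta_t(p)/\delta_t(p)$, the weight derivatives $\mathrm{d}\vec{w}(\delta_s)$ and $\mathrm{d}\vec{w}(\delta_t)$, and $\vec{\CL}_{s,t}$ — depend only on the part of the triangulation indexed by $s$ and $t$: by Definition~\ref{defn:Fontaine-Mazur}, $\vec{\CL}_{s,t}$ is built from the way the monodromy operator $N$ on $D_z$ links the two graded lines labelled by $s$ and $t$, and from the Hodge filtration on the rank-$2$ subquotient of $D_z$ carrying those two lines. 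Since the hypothesis that $\vec{\CL}_{s,t}$ is defined says exactly that $N$ identifies the two lines (so that their Frobenius slopes and Hodge-Tate weights are in ``Steinberg position''), this subquotient exists as an $S$-$B$-pair $W'$ — cut out from the triangulation of $W$ near $z$ — which is semistable and non-crystalline at $z$, locally triangulable at $z$ with parameters $(\eta_1,\eta_2)$ (the pair $(\delta_s,\delta_t)$, suitably ordered, the choice being absorbed by the antisymmetry of the formula), and whose associated filtered $E$-$(\varphi,N)$-module at $z$ has $\CL$-invariant $\vec{\CL}$. It is thus enough to prove: for such a rank-$2$ $W'$,
$$\frac{1}{[K:\BQ_p]}\left(\frac{\mathrm{d}\eta_2(p)}{\eta_2(p)} - \frac{\mathrm{d}\eta_1(p)}{\eta_1(p)}\right) + \vec{\CL}\cdot\big(\mathrm{d}\vec{w}(\eta_2) - \mathrm{d}\vec{w}(\eta_1)\big) = 0 .$$

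\textbf{Step 2 ($\CL$-invariant as a cohomological coordinate).} Near $z$ the triangulation exhibits $W'$ as an extension $0 \to W'(\eta_1) \to W' \to W'(\eta_2) \to 0$, hence as a class $c \in \Ext^1_S\big(W'(\eta_2),W'(\eta_1)\big) \cong H^1\big(G_K, W'(\eta)\big)$ with $\eta = \eta_1\eta_2^{-1}$; under Berger's equivalence between $B$-pairs and $(\varphi,\Gamma)$-modules this is $H^1$ of the rank-$1$ module $\CR_S(\eta)$ over the Robba ring. Because $W'_z$ is semistable but not crystalline, $\eta$ specializes at $z$ to an \emph{exceptional} character, one for which $H^1$ of $\CR_E(\eta_z)$ has strictly larger dimension than at a generic point of $S$; the explicit description of these cohomology groups (Colmez and Nakamura over a field, and the theory of cohomology of $(\varphi,\Gamma)$-modules over affinoids in families) attaches to $c$, in each embedding $\tau \colon K \hookrightarrow E$, a well-defined ``crystalline component'' and ``$\mathrm{log}$-component'', and the $\tau$-entry of $\vec{\CL}$ is the ratio of these two. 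That $W'$ genuinely carries a triangulation through $z$ forces each crystalline component to be nonzero, so $\vec{\CL}$ is a finite vector, in accordance with the standing hypothesis.

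\textbf{Step 3 (differentiating the triangulation).} Restrict $W'$ along a tangent vector at $z$, i.e. over the dual-number thickening $\mathrm{Spec}\,E[\varepsilon]/(\varepsilon^2) \hookrightarrow \Max(S)$; the character $\eta$ acquires a first-order variation $\mathrm{d}\eta$ whose unramified part is assembled from $\mathrm{d}\eta_1(p)/\eta_1(p)$ and $\mathrm{d}\eta_2(p)/\eta_2(p)$ and whose Hodge-Tate part is assembled from $\mathrm{d}\vec{w}(\eta_1)$ and $\mathrm{d}\vec{w}(\eta_2)$, the latter read off from Sen theory for the family. Because $\eta_z$ is exceptional, the specialization map $H^1\big(\CR_S(\eta)\big) \to H^1\big(\CR_E(\eta_z)\big)$ is not surjective; to first order its image is the subspace annihilated by a cup product against $\mathrm{d}\eta$ (a Bockstein for the thickening by the cotangent space at $z$). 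The class of the triangulated family $W'$ lands in this image and, being semistable non-crystalline, has the crystalline and $\mathrm{log}$ coordinates of Step 2, whose ratio is $\vec{\CL}$; writing out the annihilation condition in that explicit basis turns ``$c_z$ lies in the image of specialization'' into precisely the displayed identity. The coefficient $1/[K:\BQ_p]$ appears because $\eta_i(p)$ packages a norm of the per-embedding unramified data, so that its logarithmic derivative unfolds as a sum over the $[K:\BQ_p]$ embeddings of $K$ into $E$.

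\textbf{Where the difficulty lies.} The genuinely new work is to run this entirely over $K$ rather than $\BQ_p$: the Sen operator, the Hodge filtration on $D_z$, and hence $\vec{\CL}$, all split over the embeddings $K \hookrightarrow E$; the exceptional locus of $\eta$ breaks up into a component for each embedding; and one must match, embedding by embedding, the trichotomy ``crystalline / semistable non-crystalline / de Rham but not semistable'' inside the enlarged $H^1$ with the individual terms of the formula. This bookkeeping, together with the need to know that the cohomology sheaves on $S$ entering Step 3 are locally free of the expected rank near $z$ — so that the deformation argument forces an equality and not merely a dimension inequality — is exactly where local triangulability at $z$ and the hypothesis that $\vec{\CL}_{s,t}$ is defined do the essential work; isolating the correct rank-$2$ subquotient in Step 1 relies on the same Steinberg-type constraints that $N$ imposes at $z$.
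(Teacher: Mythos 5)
Your outline circles the right ideas (infinitesimal deformation at $z$, a cohomological reading of $\vec{\CL}$, a cup-product/reciprocity obstruction), but Step~1 contains a genuine gap that the rest of the argument depends on. You claim that the two graded lines indexed by $s$ and $t$ can be isolated as a rank-$2$ $S$-$B$-pair $W'$ ``cut out from the triangulation of $W$ near $z$''. The triangulation only gives you the subquotient $\Fil_t W/\Fil_{s-1}W$, of rank $t-s+1$; to pass from that to a rank-$2$ family you would have to quotient by the middle block $L$, and $L$ exists only as part of a perfect $s$-decomposition of the \emph{fiber} $D_z$ (Definition~\ref{defn:Fontaine-Mazur}) — it is a splitting of the filtered $(\varphi,N)$-module at the single point $z$, is not $\varphi$- or filtration-canonical in the family, and does not spread out to a sub-$S$-$B$-pair. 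The paper's proof is structured precisely to avoid this: it only reduces to $\CF_tW/\CF_{s-1}W$ (so effectively $s=1$, $t=n$ with the middle kept), works with the full deformation cocycle $c_B(W)\in H^1_B(W_z^*\otimes W_z)$, kills the off-diagonal and middle contributions via the projection vanishing property (Theorem~\ref{prop:middle-step}), and only at the level of the fiber passes to the rank-$3$ module $\mathscr{D}_0\subset D_2^*\otimes D_1$ to which the reciprocity-law lemma (Lemma~\ref{lem:aux}) is applied. Without a substitute for the projection vanishing property, your rank-$2$ reduction does not get off the ground.

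Steps~2 and~3 are also assertions rather than arguments: the identification of $\vec{\CL}$ with a ratio of ``crystalline'' and ``log'' coordinates of the extension class, and the claim that the first-order image of specialization is the annihilator of a cup product against $\mathrm{d}\eta$, are exactly the statements that need proof, and over a general $K$ they require the explicit computations the paper carries out — notably Lemma~\ref{lem:constant-gamma}, which pins down $[K_0:\BQ_p]\gamma_{h,0}=-v_p(\pi_K)\epsilon_h(p)$ and $\gamma_{h,\tau}=\epsilon_{h,\tau}$ (this is where the factor $1/[K:\BQ_p]$ and the $\psi_0$ versus $\psi_\tau$ bookkeeping actually come from), and Zhang's cup-product formula (Lemma~\ref{lem:zhang}) together with the Kummer-theoretic description of the relevant rank-$2$ extension as $[(p)]+\exp(\vec{\CL})$. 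As written, the proposal is a plausible roadmap in the Greenberg--Stevens/Colmez style, but it does not constitute a proof and its first reduction is false as stated.
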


In \cite{Xie2015} we proved Theorem \ref{thm:main} for a special
case, where we consider the case of $K=\BQ_p$ and demand that the
Frobenius is simisimple at $z$. The motivation and some potential
applications of our theorem was also discussed in \cite{Xie2015}.

Our paper is orginized as follows. In Section \ref{sec:B-pairs} we
recall the theory of $B$-pairs built by Berger. Then in Section
\ref{sec:family-B-pairs} we extend a part of this theory to families
of $B$-pairs, and discuss the relation between triangulations of
semistable $B$-pairs and refinements of their associated filtered
$(\varphi,N)$-modules. In Section \ref{sec:coh} we compare
cohomology groups of $(\varphi,\Gamma)$-modules and those of
$B$-pairs, and then attach a $1$-cocycle to each infinitesimal
deformation of a $B$-pair. In Section \ref{sec:aux-for} we use the
reciprocity law to build an auxiliary formula for $L$-invariants.
The $L$-invariant is defined in Section \ref{sec:L-inv}. In Section
\ref{sec:proj-van-prop} we prove a formula called ``projection
vanishing property'' for the above $1$-cocycle. Finally in Section
\ref{sec:main} we use the auxiliary formula in Section
\ref{sec:aux-for} and the projection vanishing property to deduce
Theorem \ref{thm:main}.

\section*{Notations}

Let $K$ be a finite extension of $\BQ_p$, $G_K$ the absolute Galois
group $\mathrm{Gal}(\overline{K}/K)$. Let $K_0$ be the maximal
absolutely unramified subfield of $K$. Let $G_K^{\mathrm{ab}}$
denote the maximal abelian quotient of $G_K$.

Let $\chi_\cyc$ be the cyclotomic character of $G_K$, $H_K$ the
kernel of $\chi_\cyc$ and $\Gamma_K$ the quotient $G_K/H_K$. Then
$\chi_\cyc$ induces an isomorphism from $\Gamma_K$ onto an open
subgroup of $\BZ_p^\times$.

Let $E$ be a finite extension of $K$ such that all embeddings of $K$
into an algebraic closure of $E$ are contained in $E$,
$\mathrm{Emb}(K,E)$ the set of embeddings of $K$ into $E$. We
consider $E$ as a coefficient field and let $G_K$ acts trivially on
$E$.

Let $\mathrm{rec}_K$ be the reciprocity map of local class field
theory such that $\mathrm{rec}_K(\pi_K)$ is a lifting of the inverse
of $q$th power Frobenius of $k$, where $\pi_K$ is a uniformizing
element of $K$ and $k$ is the residue field of $K$ with cardinal
number $q$. Note that the image of $\mathrm{rec}_K$ coincides with
the image of the Weil group $W_K\subset G_K$ by the quotient map
$G_K\rightarrow G_K^{\mathrm{ab}}$. Let $\mathrm{rec}_K^{-1}: W_K
\rightarrow K^\times$ be the converse map of $\mathrm{rec}_K$.

\section{$(\varphi,\Gamma_K)$-modules and $B$-pairs}
\label{sec:B-pairs}

\subsection{Fontaine's rings}

We recall the construction of Fontaine's period rings. Please
consult \cite{Fontaine, Ber02} for more details.

Let $\BC_p$ be a completed algebraic closure of $\BQ_p$ with
valuation subring $\mathfrak{o}_{\BC_p}$ and $p$-adic valuation
$v_p$ normalized such that $v_p(p)=1$.

Let $\widetilde{\RE}$ be $\{ (x^{(i)})_{i\geq 0} \ | \ x^{(i)}\in
\BC_p, \ (x^{(i+1)})^p=x^{(i)} \ \forall \  i\in\BN \},$ and let
$\widetilde{\RE}^+$ be the subset of $\widetilde{\RE}$ such that
$x^{(0)}\in \mathfrak{o}_{\BC_p}$. If $x,y\in \widetilde{\RE}$, we
define $x+y$ and $xy$ by
$$ (x+y)^{(i)}=\lim_{j\rightarrow \infty} (x^{(i+j)}+y^{(i+j)})^{p^j} , \hskip 10pt (xy)^{(i)}=x^{(i)}y^{(i)} .
$$ Then $\widetilde{\RE}$ is a field of characteristic $p$.  Define a function
$v_\RE:\widetilde{\RE}\rightarrow \: \BR\cup\{+\infty\}$ by putting
$v_\RE((x^{(n)}))=v_p(x^{(0)})$. This is a valuation for which
$\widetilde{\RE}$ is complete and $\widetilde{\RE}^+$ is the ring of
integers in $\widetilde{\RE}$.  If we let
$\varepsilon=(\varepsilon^{(n)})$ be an element of
$\widetilde{\RE}^+$ with $\epsilon^{(0)}=1$ and $\epsilon^{(1)}\neq
1$, then $\widetilde{\RE}$ is a completed algebraic closure of
$\BF_p((\varepsilon-1))$. Put $\omega= [\varepsilon]-1$. Let
$\tilde{p}$ be an element of $\widetilde{\RE}$ such that
$\tilde{p}^{(0)}=p$.

Let $\widetilde{\RA}^+$ be the ring $\RW(\widetilde{\RE}^+)$ of Witt
vectors with coefficients in $\widetilde{\RE}^+$, $\widetilde{\RA}$
the ring of Witt vectors $\RW(\widetilde{\RE})$, and
$\widetilde{\RB}^+=\widetilde{\RA}[1/p]$. The map $$\theta:
\widetilde{\RB}^+\rightarrow \BC_p, \hskip 10pt \sum_{n\gg -\infty}
p^k[x_k] \mapsto \sum_{n\gg -\infty} p^k x_k^{(0)} $$ is surjective.
Let $\RB^+_\dR$ be the $\ker(\theta)$-adic completion of
$\widetilde{\RB}^+$. Then $t_\cyc=\log[\varepsilon]$ is an element
of $\RB^+_\dR$, and put $\RB_\dR=\RB^+_\dR[1/t_\cyc]$. There is a
filtration $\Fil^\bullet$ on $\RB_\dR$ such that $\Fil^i
\RB_\dR=\bigoplus_{j\geq i} \RB^+_\dR t_\cyc^j$.

Let $\RB^+_\max$ be the subring of $\widetilde{\RB}^+$ consisting of
elements of the form $\sum_{n\geq 0} b_n ([\tilde{p}]/p)^n$, where
$b_n\in \widetilde{\RB}^+$ and $b_n\rightarrow 0$ when $n\rightarrow
+\infty$. Put $\RB_\max=\RB_\max^+[1/t_\cyc]$; $\RB_\max$ is
equipped with a $\varphi$-action. Put
$\RB_\log=\RB_\max[\log[\tilde{p}]]$;
 $\RB_\log$ is equipped with a
$\varphi$-action and a monodromy $N$; $\RB_\log^{N=0}=\RB_\max$;
$\RB_\log$ is a subring of $\RB_\dR$. Put
$\RB_e=\RB_\max^{\varphi=1}$. We have the following fundamental
exact sequence
\[ \xymatrix{  0 \ar[r] & \BQ_p \ar[r] & \RB_e \ar[r] & \RB_\dR/\RB^+_\dR \ar[r] & 0 .  }\]

If $r$ and $s$ are two elements in $\BN[1/p]\cup \{+\infty\}$, we
put $\widetilde{\RA}^{[r,s]} =\widetilde{\RA}^+\{
\frac{p}{[\bar{\omega}^r]}, \frac{[\bar{\omega}^s]}{p}  \} $ and
$\widetilde{\RB}^{[r,s]}=\widetilde{\RA}^{[r,s]}[1/p]$ with the
convention that $p/[\bar{\omega}^{+\infty}]=1/[\bar{\omega}]$ and
$[\bar{\omega}^{+\infty}]/p=0$. We equip these rings with the
$p$-adic topology. There are natural continuous $G_K$-actions on
$\widetilde{A}_{[r,s]}$ and $\widetilde{B}_{[r,s]}$. Frobenius
induces isomorphisms $\varphi:
\widetilde{A}_{[r,s]}\xrightarrow{\sim} \widetilde{A}_{[pr,ps]}$ and
$\varphi: \widetilde{B}_{[r,s]}\xrightarrow{\sim}
\widetilde{B}_{[pr,ps]}$. If $r\leq r_0\leq s_0\leq s$, then we have
the $G_K$-equivariant injective natural map $\widetilde{A}_{[r,
s]}\hookrightarrow \widetilde{A}_{[r_0,s_0]}$. For $r>0$ we put
$\widetilde{\BB}^{\dagger, r}_\rig= \bigcap_{s\in [r,+\infty)}
\widetilde{B}_{[r,s]}$ (equipped with certain Frechet topology) and
$\widetilde{\BB}^\dagger_\rig= \cup_{r>0}\widetilde{\BB}^{\dagger,
r}_\rig$ (equipped with the inductive limit topology). Frobenius
induces isomorphisms $\varphi: \widetilde{\BB}^{\dagger,
r}_\rig\xrightarrow{\sim} \widetilde{\BB}^{\dagger, pr}_\rig$ and
$\varphi: \widetilde{\BB}^{\dagger}_\rig\xrightarrow{\sim}
\widetilde{\BB}^{\dagger}_\rig$.

Put $$ A_{K'_0}=\{\sum_{k\geq -\infty}^{+\infty}a_k\omega^k \ | \
a_k\in \mathfrak{o}_{K'_0} , \ a_k\rightarrow 0 \ \text{ when
}k\rightarrow-\infty)\} $$ and $B_{K'_0}=A_{K'_0}[1/p]$. Here $K'_0$
is the maximal absolutely unramified subfield of
$K_\infty=K(\mu_{p^\infty})$. Then $A_{K'_0}$ is a complete discrete
valuation ring with $p$ as a prime element, and $B_{K'_0}$ is the
fractional field of $A_{K'_0}$. The $G_K$-action and $\varphi$
preserve $A_{K'_0}$: $\varphi(\omega)=(1+\omega)^p-1$ and
$g(\omega)=(1+\omega)^{\chi_\cyc(g)}-1$. Let $\BA$ be the $p$-adic
completion of the maximal unramified extension of $A_{K'_0}$ in
$\widetilde{\BA}$, $\BB$ its fractional field. Then $\varphi$ and
the $G_K$-action preserve $\BA$ and $\BB$.

We put $\BB_K=\BB^{H_K}$ and $\BB^{\dagger,r}_K=\BB_K\cap
\widetilde{\BB}^{\dagger, r}$. Let $\BB^{\dagger,r}_{\rig,K}$ be the
Frechet completion of $\BB^{\dagger,r}_K$ for the topology induced
from that on $\widetilde{\BB}^{\dagger,r}_\rig$, and put
$\BB^\dagger_{\rig,K}=\cup_{r>0} \BB^{\dagger,r}_{\rig,K}$ equipped
with the inductive limit topology. Frobunius induces injections
$\BB^{\dagger,r}_{\rig,K}\hookrightarrow \BB^{\dagger,pr}_{\rig,K}$
and $\BB^{\dagger}_{\rig,K}\hookrightarrow\BB^{\dagger}_{\rig,K}$;
there are continuous $\Gamma_K$-actions on
$\BB^{\dagger,r}_{\rig,K}$ and $\BB^{\dagger}_{\rig,K}$.

\vskip 5pt

We end this subsection by the definition of
$E$-$(\varphi,\Gamma_K)$-modules \cite{Naka}.

\begin{defn} An $E$-$(\varphi,\Gamma_K)$-module is a finite
$\BB^{\dagger}_{\rig,K}\otimes_{\BQ_p}E$-module $M$ equipped with a
Frobenius semilinear action $\varphi_M$ and a comtinuous semilinear
$\Gamma_K$-action such that $M$ is free as a
$\BB^{\dagger}_{\rig,K}$-module, that
$\mathrm{id}_{\BB^\dagger_{\rig,K}}\otimes \varphi_M :
\BB^\dagger_{\rig,K}\bigotimes_{\varphi, \BB^\dagger_{\rig,K}} M
\rightarrow M $ is an isomorphism, and that $\varphi_M$ and the
$\Gamma_K$-action commute with each other.
\end{defn}

By \cite[Lemma 1.30]{Naka} if $M$ is an
$E$-$(\varphi,\Gamma_K)$-module, 
then $M$ is free over $\BB^\dagger_{\rig,K}\otimes_{\BQ_p}E$.

\subsection{$B$-pairs}

We recall the theory of $E$-$B$-pairs \cite{Ber08, Naka}.

Put $\BB_{e,E}=\BB_e\otimes_{\BQ_p}E$,
$\BB^+_{\dR,E}=\BB^+_\dR\otimes_{\BQ_p}E$ and $\BB_{\dR,
E}=\BB_\dR\otimes_{\BQ_p}E$. We extend the $G_K$-actions
$E$-linearly to these rings.

\begin{defn} An {\it $E$-$B$-pair} of $G_K$ is a couple $W=(W_e, W_\dR^+)$
such that

$\bullet$ $W_e$ is a finite $\BB_{e,E}$-module with a continuous
semilinear action $G_K$-action which is free as a $\BB_e$-module.

$\bullet$ $W^+_\dR\subset W_\dR=\BB_\dR\otimes_{\BB_e} W_e$ is a
$G_K$-stable $\BB^+_{\dR,E}$-lattice.
\end{defn}

By \cite[Remark 1.3]{Naka} $W_e$ is free over $\BB_{e,E}$ and
$W^+_\dR$ is free over $\BB^+_{\dR,E}$.

If $V$ is an $E$-representation of $G_K$, then
$W(V)=(\BB_{e,E}\otimes_E V, \BB^+_{\dR,E}\otimes_E V)$ is an
$E$-$B$-pair, called the {\it $E$-$B$-pair attached to $V$}.

If $S$ is a Banach $E$-algebra, we can define $S$-$B$-pairs
similarly; to each $S$-representation $V$ of $G_K$ is associated an
$S$-$B$-pair $W(V)=(\BB_{e,E}\otimes_E V, \BB^+_{\dR,E}\otimes_E
V)$.

If $W_1=(W_{1,e}, W_{1,\dR}^+)$ and $W_2=(W_{2,e}, W^+_{2,\dR})$ are
two $E$-$B$-pairs, we define $W_1\bigotimes W_2$ to be
$$(W_{1,e}\bigotimes\limits_{\BB_{e,E}}W_{2,e},
W^+_{1,\dR}\bigotimes\limits_{\BB^+_{\dR,E}}W^+_{2,\dR}).$$ Here,
$W_{1,e}\bigotimes\limits_{\BB_{e,E}}W_{2,e}$ is equipped with the
diagonal $G_K$-action, and
$W^+_{1,\dR}\otimes_{\BB^+_{\dR,E}}W^+_{2,\dR}$ is naturally
considered as a $G_K$-stable $\BB^+_{\dR,E}$-lattice of
$$\BB_\dR\otimes_{\BB_e}(W_{1,e}\bigotimes_{\BB_{e,E}}W_{2,e})=
W_{1,\dR}\bigotimes_{\BB_{\dR,E}}W_{2,\dR},$$ where
$W_{1,\dR}=\BB_\dR\otimes_{\BB_e} W_{1,e}$ and
$W_{2,\dR}=\BB_\dR\otimes_{\BB_e} W_{2,e}$.

If $W=(W_e, W^+_\dR)$ is an $E$-$B$-pair with
$W_\dR=\BB_\dR\otimes_{\BB_e} W_e$, we define the dual of $W$ to be
$W^*=(W^*_e, W^{*,+}_\dR)$, where $W^*_e$ is
$\mathrm{Hom}_{\BB_e}(W,\BB_e)$ equipped with the natural
$G_K$-action, and $W^{*,+}_\dR$ is the $G_K$-stable lattice of
$\BB_\dR\otimes_{\BB_e} W^*_e\cong
\mathrm{Hom}_{\BB_\dR}(W_\dR,\BB_\dR)$ defined by
$$ \{\ell\in \mathrm{Hom}_{\BB_\dR}(W_\dR,\BB_\dR): \ell(x)\in \BB^+_\dR \text{ for all } x\in W^+_\dR \}.
$$

The relation between $(\varphi,\Gamma_K)$-modules and $B$-pairs is
built by Berger \cite{Ber08}. We recall Berger's construction below.

Let $M$ be a $(\varphi,\Gamma_K)$-module of rank $d$ over the Robba
ring $\BB^{\dagger}_{\rig,K}$. Berger \cite{Ber08} showed that
$$W_e(M):=(\widetilde{\BB}^\dagger_{\rig}[1/t]\otimes
_{\BB^\dagger_{\rig,K}}M)^{\varphi=1}$$ is a free $\BB_e$-module of
rank $d$ and equipped with a continuous semilinear $G_K$-action.

For sufficiently large $r_0>0$ we can take a unique
$\Gamma_K$-stable finite free $\BB^{\dagger,r}_{\rig,K}$-submodule
$M^{r}\subset M$ such that
$$\BB^{\dagger}_{\rig,K}\otimes_{\BB^{\dagger,r}_{\rig,K}}M^r=M$$ and
$$\mathrm{id}_{\BB^{\dagger, pr}_{\rig,K}}\otimes \varphi_M:
\BB^{\dagger,
pr}_{\rig,K}\otimes_{\BB^{\dagger,r}_{\rig,K}}M^r\xrightarrow{\sim}M^{pr}$$
for any $r\geq r_0$. Berger \cite{Ber08} showed that the
$\BB^+_\dR$-module
$$W^+_\dR(M):=\BB^+_\dR\otimes_{i_n, \BB^{\dagger,
(p-1)p^{n-1}}_{\rig,K}} M^{(p-1)p^{n-1}}$$  is independent of any
$n$ such that $(p-1)p^{n-1}\geq r_0$, and showed that there is a
canonical $G_K$-equivariant isomorphism
$\BB_\dR\otimes_{\BB_e}W_e(M)\xrightarrow{\sim}\BB_\dR\otimes_{\BB^+_\dR}W^+_\dR(M)$.

Put $W(M)=(W_e(M), W^+_\dR(M))$. This is an $E$-$B$-pair of rank
$d=\mathrm{rank}_{\BB^{\dagger}_{\rig,K}}M$.

The following is a variant version of Berger's result \cite[Theorem
2.2.7]{Ber08}.

\begin{prop}\label{thm:berger} \cite[Theorem 1.36]{Naka} The functor $M\mapsto W(M)$ is an exact functor and this
gives an equivalence of categories between the category of
$E$-$(\varphi,\Gamma_K)$-modules and the category of $E$-$B$-pairs
of $G_K$.
\end{prop}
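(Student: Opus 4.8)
The plan is to produce an explicit quasi-inverse $W\mapsto M(W)$ to the functor $M\mapsto W(M)$, and then to reduce the statement over $E$ to Berger's theorem over $\BQ_p$ (\cite[Theorem 2.2.7]{Ber08}) by a routine $E$-linearity bookkeeping that uses \cite[Lemma 1.30]{Naka} and \cite[Remark 1.3]{Naka}.

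To build $M(W)$ from an $E$-$B$-pair $W=(W_e,W^+_\dR)$, the idea is to glue the datum $W_e$, which controls the module ``away from $t=0$'', with the lattice $W^+_\dR$, which controls it ``at $t=0$''. The module $\widetilde{\BB}^\dagger_{\rig}[1/t]\otimes_{\BB_e}W_e$ is free over $\widetilde{\BB}^\dagger_{\rig}[1/t]$ and carries semilinear $\varphi$ and $G_K$ actions with $\varphi$-invariants $W_e$; one then kills the pole at $t=0$ using the localization maps $\iota_m\colon \widetilde{\BB}^{\dagger,r}_{\rig}\to\BB^+_\dR$, roughly by setting, for $r$ and $n$ large,
$$ \widetilde{N}^{r}(W)=\{\, x\in \widetilde{\BB}^{\dagger,r}_{\rig}[1/t]\otimes_{\BB_e}W_e \ :\ \iota_m(x)\in W^+_\dR \ \text{ for all } m\geq n \,\} $$
and gluing these as $r$ varies. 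Using the fundamental exact sequence together with the results of Berger--Colmez on $\varphi$-modules over $\widetilde{\BB}^\dagger_{\rig}[1/t]$ (notably $(\widetilde{\BB}^\dagger_{\rig}[1/t])^{\varphi=1}=\BB_e$ and the fact that such a $\varphi$-module is recovered from its $\varphi$-invariants), one checks that $\widetilde{N}(W):=\cup_{r}\widetilde{N}^{r}(W)$ is finite free over $\widetilde{\BB}^\dagger_{\rig}$ of rank $\mathrm{rank}_{\BB_e}W_e$, stable under $\varphi$ and $G_K$. Taking $H_K$-invariants yields a $(\varphi,\Gamma_K)$-module over $\widetilde{\BB}^\dagger_{\rig,K}$, and then the decompletion equivalence (the base-change functor from $(\varphi,\Gamma_K)$-modules over $\BB^\dagger_{\rig,K}$ to those over $\widetilde{\BB}^\dagger_{\rig,K}$ is an equivalence, after Cherbonnier--Colmez and Kedlaya) produces $M(W)$ over the Robba ring. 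The whole construction is $E$-linear, and since $[E:\BQ_p]<\infty$ no completion issues arise; by \cite[Lemma 1.30]{Naka} the output is automatically free over $\BB^{\dagger}_{\rig,K}\otimes_{\BQ_p}E$, hence an $E$-$(\varphi,\Gamma_K)$-module.

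Next I would verify the two natural isomorphisms $W(M(W))\cong W$ and $M(W(M))\cong M$: the first is built into the construction (one shows $W_e(\widetilde{N}(W))=W_e$ and $W^+_\dR(\widetilde{N}(W))=W^+_\dR$ from the defining intersection), and the second follows from Berger's description of $W_e(M)$, $W^+_\dR(M)$ together with the uniqueness of the $\Gamma_K$-stable lattices $M^{r}$. For exactness, a short sequence of $E$-$(\varphi,\Gamma_K)$-modules is exact iff it is exact as a sequence of $\BB^\dagger_{\rig,K}$-modules, and a short sequence of $E$-$B$-pairs is exact iff both the $W_e$- and the $W^+_\dR$-components are; exactness of $M\mapsto W^+_\dR(M)$ is immediate since the relevant base change $\BB^+_\dR\otimes_{\iota_n,\,\BB^{\dagger,r}_{\rig,K}}(-)$ is exact on the finite free modules $M^{r}$, while exactness of $M\mapsto W_e(M)$ uses flatness of $\widetilde{\BB}^\dagger_{\rig}[1/t]$ over $\BB^\dagger_{\rig,K}$ and the vanishing of the first cohomology of $\varphi-1$ on $\widetilde{\BB}^\dagger_{\rig}[1/t]\otimes_{\BB^\dagger_{\rig,K}}M$ (again a Berger--Colmez input), which makes $(-)^{\varphi=1}$ exact on these modules.

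I expect the main obstacle to be the decompletion/overconvergence step, which is the one genuinely deep ingredient; the gluing construction and the exactness verification are technical but essentially formal once the structural properties of $\widetilde{\BB}^\dagger_{\rig}[1/t]$ and $\BB^+_\dR$ are in place. For the present paper, however, the most economical route is simply to quote \cite[Theorem 2.2.7]{Ber08} for $\BQ_p$-coefficients and then tensor with $E$: an $E$-module structure on an object of either category corresponds to one on the associated object of the other (using \cite[Remark 1.3]{Naka} on the $W^+_\dR$-side and \cite[Lemma 1.30]{Naka} on the $(\varphi,\Gamma_K)$-module side), so the equivalence is $E$-equivariant and restricts to the asserted equivalence between $E$-$(\varphi,\Gamma_K)$-modules and $E$-$B$-pairs, with exactness inherited from the $\BQ_p$-case.
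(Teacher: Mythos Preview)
The paper does not prove this proposition at all: it is stated with a citation to \cite[Theorem 1.36]{Naka} and no argument is given. So there is nothing substantive to compare against on the paper's side.

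Your sketch is sound, and in fact your final paragraph lands exactly on the route the paper (via Nakamura) takes: invoke Berger's equivalence \cite[Theorem 2.2.7]{Ber08} over $\BQ_p$ and observe that an $E$-linear structure on one side corresponds to an $E$-linear structure on the other, with \cite[Remark 1.3]{Naka} and \cite[Lemma 1.30]{Naka} guaranteeing the requisite freeness over $\BB_{e,E}$, $\BB^+_{\dR,E}$, and $\BB^\dagger_{\rig,K}\otimes_{\BQ_p}E$. The longer first part of your proposal, where you redo Berger's gluing construction and the decompletion step, is a correct outline of what actually underlies the result, but it is more than the paper asks for; the decompletion/overconvergence ingredient you flag as the ``genuinely deep'' step is precisely what is packaged inside \cite{Ber08}, and the paper is content to import it wholesale. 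In short: your proposal is correct, and its economical version coincides with the paper's approach of simply citing the literature.
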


\begin{prop} The functor $M\mapsto W(M)$ respects the tensor
products and duals.
\end{prop}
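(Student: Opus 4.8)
The identity is checked separately on the $\BB_e$-part and on the $\BB^+_\dR$-part of the $B$-pairs, and in both cases it reduces to elementary base change identities for finite free modules together with two inputs from Berger's construction: the descent isomorphism
$$\widetilde{\BB}^{\dagger}_\rig[1/t]\otimes_{\BB_e}W_e(M)\xrightarrow{\ \sim\ }\widetilde{\BB}^{\dagger}_\rig[1/t]\otimes_{\BB^{\dagger}_{\rig,K}}M$$
underlying the fact that $W_e(M)$ is free of rank $\mathrm{rank}_{\BB^{\dagger}_{\rig,K}}M$, and the equality $(\widetilde{\BB}^{\dagger}_\rig[1/t])^{\varphi=1}=\BB_e$. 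To treat tensor products, write $M=M_1\otimes M_2$ for the tensor product over $\BB^{\dagger}_{\rig,K}\otimes_{\BQ_p}E$ of two $E$-$(\varphi,\Gamma_K)$-modules. Viewing $W_e(M_i)$ inside $\widetilde{\BB}^{\dagger}_\rig[1/t]\otimes_{\BB^{\dagger}_{\rig,K}}M_i$, multiplication gives a natural $G_K$-equivariant map $\eta\colon W_e(M_1)\otimes_{\BB_{e,E}}W_e(M_2)\to W_e(M)$, well defined because the $W_e(M_i)$ consist of $\varphi$-fixed elements. It is an isomorphism: both sides are finite free over $\BB_{e,E}$, and applying $(-)^{\varphi=1}$ recovers $\eta$ from $\eta\otimes_{\BB_e}\widetilde{\BB}^{\dagger}_\rig[1/t]$; after this base change the descent isomorphism (used three times) and the identity $R'\otimes_R(A\otimes_R B)\cong(R'\otimes_R A)\otimes_{R'}(R'\otimes_R B)$ identify source and target, compatibly with $\eta$, with $\widetilde{\BB}^{\dagger}_\rig[1/t]\otimes_{\BB^{\dagger}_{\rig,K}}M$.

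For the $\BB^+_\dR$-part, I would first observe that for all sufficiently large $r$ one may take $M^r=M_1^r\otimes_{\BB^{\dagger,r}_{\rig,K}}M_2^r$: this is a $\Gamma_K$-stable finite free $\BB^{\dagger,r}_{\rig,K}$-submodule generating $M$ over $\BB^{\dagger}_{\rig,K}$ for which $\mathrm{id}\otimes\varphi_M$ induces the required isomorphism $\BB^{\dagger,pr}_{\rig,K}\otimes_{\BB^{\dagger,r}_{\rig,K}}M^r\xrightarrow{\sim}M^{pr}$, hence it agrees with Berger's canonical lattice by uniqueness. Applying $\BB^+_\dR\otimes_{i_n,\BB^{\dagger,r_n}_{\rig,K}}(-)$ with $r_n=(p-1)p^{n-1}$ and the same base change identity yields a canonical isomorphism $W^+_\dR(M)\cong W^+_\dR(M_1)\otimes_{\BB^+_{\dR,E}}W^+_\dR(M_2)$. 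Since Berger's comparison isomorphism $\BB_\dR\otimes_{\BB_e}W_e(M)\xrightarrow{\sim}\BB_\dR\otimes_{\BB^+_\dR}W^+_\dR(M)$ is produced from the inclusion of $M^r$ into its base change to $\widetilde{\BB}^{\dagger,r}_\rig[1/t]$ after applying $i_n$ and $(-)^{\varphi=1}$, it is the tensor product of the comparison isomorphisms for $M_1$ and $M_2$; putting the three identifications together shows $W(M_1\otimes M_2)\cong W(M_1)\otimes W(M_2)$ as $E$-$B$-pairs.

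The argument for duals is entirely parallel. Writing $M^*$ for the dual $(\varphi,\Gamma_K)$-module, one checks as above that $(M^*)^r=(M^r)^*$ for $r\gg0$. As $M$ and the $M^r$ are finite free, $\Hom$ commutes with the base changes involved; using the descent isomorphism, $(\widetilde{\BB}^{\dagger}_\rig[1/t])^{\varphi=1}=\BB_e$, and the fact that a homomorphism out of the $\varphi$-fixed module $W_e(M)$ is $\varphi$-equivariant precisely when it is $\BB_{e,E}$-linear, one obtains $W_e(M^*)=\Hom_{\BB_{e,E}}(W_e(M),\BB_{e,E})$ and $W^+_\dR(M^*)=\Hom_{\BB^+_{\dR,E}}(W^+_\dR(M),\BB^+_{\dR,E})$. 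Under the comparison isomorphism the latter is exactly the lattice $\{\ell\in\Hom_{\BB_\dR}(W_\dR(M),\BB_\dR):\ell(W^+_\dR(M))\subseteq\BB^+_\dR\}$ that defines $W(M)^*$, so $W(M^*)\cong W(M)^*$.

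No step presents a serious difficulty; the point that deserves the most care is the compatibility of Berger's auxiliary data with the two operations, namely the identities $(M_1\otimes M_2)^r=M_1^r\otimes M_2^r$ and $(M^*)^r=(M^r)^*$ for $r\gg0$ — which rely on the \emph{uniqueness} of the $\Gamma_K$-stable lattice $M^r$ satisfying the prescribed Frobenius condition — and, relatedly, the verification that the canonical $\BB_\dR$-comparison isomorphism in Berger's construction is monoidal and respects duals.
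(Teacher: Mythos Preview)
Your proof is correct and follows essentially the same route as the paper's: build the natural map on the $\BB_e$-side by taking $\varphi$-invariants of the obvious isomorphism over $\widetilde{\BB}^{\dagger}_\rig[1/t]$, handle the $\BB^+_\dR$-side via the identification $(M_1\otimes M_2)^r=M_1^r\otimes M_2^r$ and base change through $i_n$, and treat duals in parallel.

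The one point where you diverge from the paper is in proving that the map $\eta\colon W_e(M_1)\otimes_{\BB_{e,E}}W_e(M_2)\to W_e(M_1\otimes M_2)$ is an isomorphism. The paper records only that $\eta$ is injective and then invokes a rank count together with \cite[Lemma~1.10]{Naka} (that an injective $\BB_e$-linear map between free $\BB_e$-modules of the same rank is an isomorphism). Your argument instead passes to $\widetilde{\BB}^{\dagger}_\rig[1/t]$ via the descent isomorphism, observes that $\eta\otimes_{\BB_e}\widetilde{\BB}^{\dagger}_\rig[1/t]$ is the tautological identification, and then recovers $\eta$ by applying $(-)^{\varphi=1}$ to both sides using $(\widetilde{\BB}^{\dagger}_\rig[1/t])^{\varphi=1}=\BB_e$ and freeness. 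This is a clean and self-contained alternative that avoids the external reference; the paper's version is shorter on the page but hides the work in Nakamura's lemma. Your added care about the uniqueness of $M^r$ and the monoidal compatibility of the $\BB_\dR$-comparison isomorphism makes explicit what the paper leaves to the phrase ``from the above Berger's construction we see.''
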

\begin{proof} Let $M_1$ and $M_2$ be two $E$-$(\varphi,\Gamma_K)$-modules. By taking $\varphi$-invariants, the
isomorphism
$$ (\widetilde{\BB}^\dagger_{\rig}[1/t]\otimes_{\widetilde{\BB}^{\dagger,r}_{\rig,K}}M_1
) \otimes_{\widetilde{\BB}^\dagger_{\rig}\otimes_{\BQ_p}E[1/t]}
(\widetilde{\BB}^\dagger_{\rig}[1/t]\otimes_{\widetilde{\BB}^{\dagger,r}_{\rig,K}}M_2
) \xrightarrow{\sim}
\widetilde{\BB}^\dagger_{\rig}[1/t]\otimes_{\widetilde{\BB}^\dagger_{\rig,K}}(M_1\otimes
M_2) $$ induces a $G_K$-equivariant injective map
$$ W_e(M_1)\otimes_{\BB_{e,E}} W_e(M_2) \rightarrow W_e(M_1\otimes M_2) .
$$ Here, $M_1\otimes
M_2$ denotes the $E$-$(\varphi,\Gamma_K)$-module $
M_1\otimes_{\BB^\dagger_{\rig,K}\otimes_{\BQ_p}E} M_2$. Comparing
dimensions and using \cite[Lemma 1.10]{Naka} we see that this map is
in fact an isomorphism. From the above Berger's construction we see
that the natural map $$
W^+_\dR(M_1)\otimes_{\BB^+_\dR\otimes_{\BQ_p}E}
W^+_\dR(M_2)\rightarrow W^+_\dR(M_1 \otimes M_2)$$ is an
isomorphism. This proves that the functor $M\mapsto W(M)$ respects
tensor products. The proof of that it respects duals is similar.
\end{proof}

\subsection{Semistable $E$-$B$-pairs}

\begin{defn} An {\it $E$-$(\varphi,N)$-module} over $K$ is a
$K_0\otimes_{\BQ_p}E$-module $D$ with a $\varphi\otimes
1$-semilinear isomorphism $\varphi_D: D\rightarrow D$, and a
$K_0\otimes_{\BQ_p}E$-linear map $N_D:D\rightarrow D$ such that
$N_D\varphi_D=p \varphi_D N_D$. A {\it filtered
$E$-$(\varphi,N)$-module} over $K$ is an $E$-$(\varphi,N)$-module
with an exhaustive $\BZ$-indexed descending filtration
$\Fil^\bullet$ on $K\otimes_{K_0}D$.
\end{defn}

We have an isomorphism of rings
\begin{equation} \label{eq:vector-element}
K\otimes_{\BQ_p}E\xrightarrow{\sim} \bigoplus\limits_{\tau\in
\mathrm{Emb}(K,E)}E_\tau, \ \  a\otimes b \mapsto ( \tau(a)b
)_{\tau}, \end{equation} where $E_\tau$ is a copy of $E$ for each
$\tau\in \mathrm{Emb}(K,E)$. Let $e_\tau$ be the unity of $E_\tau$.
Then $1=\sum_{\tau}e_\tau$. Put $D_\tau= e_\tau (K\otimes_{K_0}D)$.
Then
$K\otimes_{K_0}D=\bigoplus\limits_{\tau\in\mathrm{Emb}(K,E)}D_\tau$.
Let $\Fil_\tau$ denote the induced filtration on $D_\tau$.

\begin{defn} Let $W=(W_e, W^+_\dR)$ be an $E$-$B$-pair. We define
$\BD_\cris(W)=(\BB_{\mathrm{max}}\otimes_{\BB_e}W_e)^{G_K}$,
$\BD_\st(W)=(\BB_\log\otimes_{\BB_e}W_e)^{G_K}$ and
$\BD_\dR(W)=(\BB_\dR\otimes_{\BB_e}W_e)^{G_K}$.  Then we have
$\dim_{K_0}(\BD_?(W))\leq \mathrm{rank}_{\BB_e}W_e$ for $?=\cris,
\st$, and $\dim_{K}(\BD_\dR(W))\leq \mathrm{rank}_{\BB_e}W_e$. We
say that $W$ is {\it crystalline} (resp. {\it semistable}) if
$\dim_{K_0}(\BD_?(W)):= \mathrm{rank}_{\BB_e}W_e$ for $?=\cris$
(resp. $\st$).
\end{defn}

If $W$ is a semistable $E$-$B$-pair, we attach to $W$ a filtered
$E$-$(\varphi,N)$-module as follows. The underlying
$E$-$(\varphi,N)$-module is $\BD_\st(W)$; the filtration on
$\BD_\dR(W)=K\otimes_{K_0}\BD_\st(W)$ is given by $\Fil^i
\BD_\dR(W)= t^i W^+_\dR\cap \BD_\dR(W)$.

\begin{prop}\label{prop:berger}
\begin{enumerate}
\item\label{it:berger-a} The functor $W\mapsto \BD_\st(W)$ realizes an equivalence of
categories between the category of semistable $E$-$B$-pairs of $G_K$
and the category of filtered $E$-$(\varphi,N)$-modules over $K$.
\item\label{it:berger-b-2} If $W_1$ and $W_2$ are semistable, then
so is $W_1\otimes W_2$.
\item\label{it:berger-d} The functor $W\mapsto \BD_\st(W)$ respects the tensor products
and duals.
\item\label{it:berger-b} If
\[\xymatrix{ 0 \ar[r] & W_1 \ar[r] & W \ar[r] & W_2 \ar[r] & 0 }\]
is a short exact sequence of $E$-$B$-pairs, and $W$ is semistable,
then $W_1$ and $W_2$ are semistable.
\item\label{it:berger-c} The functor $W\mapsto \BD_\st(W)$ is exact.
\end{enumerate}
\end{prop}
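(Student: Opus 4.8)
The plan is to deduce everything from the equivalence in Proposition \ref{thm:berger} together with Berger's dictionary between semistable $(\varphi,\Gamma_K)$-modules and filtered $(\varphi,N)$-modules, and from the basic linear-algebra behaviour of the functors $\BB_{\max}\otimes_{\BB_e}(-)$ and $\BB_{\dR}\otimes_{\BB_e}(-)$. For part \ref{it:berger-a}, the key point is that semistability of an $E$-$B$-pair $W$ is equivalent to semistability of the associated $E$-$(\varphi,\Gamma_K)$-module $M$ with $W(M)\cong W$, and that $\BD_\st(W)=\BD_\st(M)$ in the sense of $(\varphi,\Gamma_K)$-module theory; Berger's equivalence (the $p$-adic monodromy theorem together with \cite{Ber08}) then identifies the target category with filtered $E$-$(\varphi,N)$-modules over $K$. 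One must check the inverse functor sends a filtered $(\varphi,N)$-module $D$ to the $B$-pair with $W_e=(\BB_{\log}\otimes_{K_0}D)^{N=0,\varphi=1}$ and $W^+_\dR$ the lattice in $\BB_\dR\otimes_K D_K$ cut out by $\Fil^\bullet$, and that this is quasi-inverse; these are the standard Fontaine constructions, now with $E$-coefficients, and no new idea is needed beyond keeping track of the $K_0\otimes_{\BQ_p}E$-module structure.

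For parts \ref{it:berger-b-2} and \ref{it:berger-d}, I would argue as follows. There is always a natural $G_K$-equivariant, $\varphi$-, $N$- and filtration-compatible map
$$ \BD_\st(W_1)\otimes_{K_0\otimes_{\BQ_p}E}\BD_\st(W_2)\longrightarrow \BD_\st(W_1\otimes W_2),$$
induced by multiplication in $\BB_{\log}$ and the definition of the tensor product of $B$-pairs recalled above; similarly $\BD_\st(W^*)\to \BD_\st(W)^*$. By left-exactness of $G_K$-invariants and the inequality $\dim_{K_0}\BD_\st(W)\le \mathrm{rank}_{\BB_e}W_e$, the left-hand side has dimension at most $\mathrm{rank}(W_{1,e})\cdot\mathrm{rank}(W_{2,e})=\mathrm{rank}((W_1\otimes W_2)_e)$, with equality exactly when $W_1$ and $W_2$ are semistable. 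In that case both sides have the same dimension and the map is injective (it is injective because $\BB_{\log}$ is a domain and the $\BD_\st$'s are $\BB_{\log}$-linearly independent families inside $\BB_{\log}\otimes W_e$), hence an isomorphism; this simultaneously proves that $W_1\otimes W_2$ is semistable, so \ref{it:berger-b-2} holds, and that the tensor structure is respected. The filtrations match because $\Fil^\bullet$ on $\BD_\dR$ is computed from $W^+_\dR$ and the tensor $B$-pair has $W^+_\dR$ the tensor of the lattices. The dual case is the same computation.

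For parts \ref{it:berger-b} and \ref{it:berger-c}, I would pass through the equivalence of Proposition \ref{thm:berger}: a short exact sequence of $E$-$B$-pairs corresponds to a short exact sequence of $E$-$(\varphi,\Gamma_K)$-modules, and there semistability is the statement that $\dim_{K_0}\BD_\st$ is maximal. Applying the left-exact functor $\BD_\st$ to $0\to W_1\to W\to W_2\to 0$ gives
$$ 0\to \BD_\st(W_1)\to \BD_\st(W)\to \BD_\st(W_2),$$
whence $\dim\BD_\st(W)\le \dim\BD_\st(W_1)+\dim\BD_\st(W_2)\le \mathrm{rank}(W_{1,e})+\mathrm{rank}(W_{2,e})=\mathrm{rank}(W_e)$. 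If $W$ is semistable the outer terms are equal, forcing equality everywhere; hence $W_1$ and $W_2$ are semistable and the sequence of $\BD_\st$'s is also exact on the right, giving \ref{it:berger-b} and \ref{it:berger-c} at once. (Exactness of $\BD_\st$ as a functor on \emph{all} semistable $B$-pairs then follows, since any short exact sequence of semistable $B$-pairs is in particular a short exact sequence of $B$-pairs with $W$ semistable.) The main obstacle, such as it is, is bookkeeping: making sure that the dimension count is done over $K_0\otimes_{\BQ_p}E$ rather than over $K_0$, that the comparison maps are genuinely compatible with $N$ and $\Fil^\bullet$ and not merely with $\varphi$ and $G_K$, and that injectivity of the natural map in \ref{it:berger-d} is justified (via $\BB_{\log}$ being an integral domain containing $\BB_e$ with $\BB_{\log}^{G_K}=K_0$); none of this is deep, but it is where an incorrect proof would go wrong, so I would spell it out carefully.
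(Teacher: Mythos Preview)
Your treatment of (\ref{it:berger-a}), (\ref{it:berger-b-2}), (\ref{it:berger-d}) and (\ref{it:berger-b}) follows the same dimension-count route as the paper and is fine.

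There is, however, a genuine gap in your argument for (\ref{it:berger-c}). Exactness of $\BD_\st$ is exactness in the category of \emph{filtered} $E$-$(\varphi,N)$-modules, so the maps in
\[
0\to\BD_\st(W_1)\to\BD_\st(W)\to\BD_\st(W_2)\to 0
\]
must be strict for the filtrations on $K\otimes_{K_0}(-)$. Your dimension count only produces exactness of the underlying $(\varphi,N)$-modules (surjectivity on the right); strictness of the filtrations does not follow automatically, since the filtration induced on $\BD_\dR(W_1)$ from $\BD_\dR(W)$ could a priori be finer than its intrinsic one, and dually for $W_2$. The paper closes this gap with a short duality argument: setting $d_i(-)=\dim_K\Fil^i\BD_\dR(-)$, the exact sequence gives $d_i(W)\le d_i(W_1)+d_i(W_2)$, and the same inequality applied to the dual short exact sequence gives $d_{1-i}(W^*)\le d_{1-i}(W_1^*)+d_{1-i}(W_2^*)$. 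Using the identity $d_i(W)=\dim_K\BD_\dR(W)-d_{1-i}(W^*)$ (which relies on the compatibility with duals from (\ref{it:berger-d})) one combines these to force $d_i(W)=d_i(W_1)+d_i(W_2)$ for every $i$, which is precisely strictness. You need to insert this step; as written, your proof of (\ref{it:berger-c}) is incomplete.
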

\begin{proof} Assertion (\ref{it:berger-a}) follows from \cite[Proposition 2.3.4]{Ber08}. See also \cite[Theorem 1.18
(2)]{Naka}.

Let $W_1$ and $ W_2$ be two $E$-$B$-pairs. The isomorphism
$$ (\BB_{\log} \otimes_{\BB_e} W_1)
\otimes_{\BB_\log\otimes_{\BQ_p}E} (\BB_{\log} \otimes_{\BB_e} W_2)
\xrightarrow{\sim} \BB_{\log} \otimes_{\BB_e} (W_1 \otimes W_2)
$$ induces an injective map
\begin{equation}\label{eq:inj-iso}
\BD_\st(W_1)\otimes_{K_0\otimes_{\BQ_p}E} \BD_\st(W_2) \rightarrow
\BD_\st(W_1\otimes W_2).\end{equation} When $W_1$ and $W_2$ are
semistable, the dimension of the source over $K_0$ is
$\frac{\mathrm{rank}_{\BB_e}W_1
\mathrm{rank}_{\BB_e}W_2}{[E:\BQ_p]}$. The dimension of the target
over $K_0$ is always  equal to or less than  $\mathrm{rank}_{\BB_e}
(W_1\otimes W_2)=\frac{\mathrm{rank}_{\BB_e}W_1
\mathrm{rank}_{\BB_e}W_2}{[E:\BQ_p]}$. Hence, (\ref{eq:inj-iso}) is
an isomorphism, and so $W_1\otimes W_2$ is semistable. This proves
(\ref{it:berger-b-2}). Similarly, the isomorphism
\begin{equation} \label{eq:iso-deRham}
(\BB_{\dR} \otimes_{\BB_e} W_1) \otimes_{\BB_\dR\otimes_{\BQ_p}E}
(\BB_{\dR} \otimes_{\BB_e} W_2) \xrightarrow{\sim} \BB_{\dR}
\otimes_{\BB_e} (W_1 \otimes W_2)
\end{equation} induces an isomorphism
$$\BD_\dR(W_1)\otimes_{K\otimes_{\BQ_p}E} \BD_\dR(W_2) \rightarrow
\BD_\dR(W_1\otimes W_2).$$ Via the isomorphism (\ref{eq:iso-deRham})
the filtration on $(\BB_{\dR} \otimes_{\BB_e} W_1)
\otimes_{\BB_\dR\otimes_{\BQ_p}E} (\BB_{\dR} \otimes_{\BB_e} W_2)$
coincides with that on $\BB_{\dR} \otimes_{\BB_e} (W_1 \otimes W_2)
$. Therefore, the filtration on
$\BD_\dR(W_1)\otimes_{K\otimes_{\BQ_p}E} \BD_\dR(W_2) $ and that on
$\BD_\dR(W_1\otimes W_2)$ coincide. Indeed, they are the
restrictions of the filtrations on $(\BB_{\dR} \otimes_{\BB_e} W_1)
\otimes_{\BB_\dR\otimes_{\BQ_p}E} (\BB_{\dR} \otimes_{\BB_e} W_2)$
and $\BB_{\dR} \otimes_{\BB_e} (W_1 \otimes W_2)$ respectively.
Similarly we can show that $W\mapsto \BD_\st(W)$ respects duals.
This proves (\ref{it:berger-d}).

For (\ref{it:berger-b}) we have the following exact sequence
\begin{equation}\label{eq:exact-3-W} \xymatrix{ 0 \ar[r] & \BD_\st(W_1) \ar[r] & \BD_\st(W)
\ar[r] & \BD_\st(W_2)  .} \end{equation} So (\ref{it:berger-b})
follows from a dimension argument. Furthermore, when $W$ is
semistable,  $\BD_\st(W)\rightarrow \BD_\st(W_2)$ is surjective. For
any $i\in\BZ$ we write $d_i(W)$ for $\dim_{K}\Fil^i \BD_\st(W)$. As
the maps in the exact sequence (\ref{eq:exact-3-W}) respect
filtrations, we have $d_i(W)\leq d_i(W_1)+ d_i(W_2)$. Similarly, we
have $d_{1-i}(W^*)\leq d_{1-i}(W^*_1)+ d_{1-i}(W^*_2)$. As $W\mapsto
\BD_\st(W)$ respects duals, we have $d_i(W)=\dim_{K}(\BD_\dR(W))-
d_{1-i}(W^*)$. Then
\begin{eqnarray*} d_i(W) & = & \dim_{K}(\BD_\dR(W))- d_{1-i}(W^*) \\ & \geq
& (\dim_{K}(\BD_\dR(W_1))- d_{1-i}(W_1^*) ) +
\dim_{K}(\BD_\dR(W_2))- d_{1-i}(W_2^*) \\ &=&
d_i(W_1)+d_i(W_2).\end{eqnarray*} Thus we must have
$d_i(W)=d_i(W_1)+d_i(W_2)$ for all $i\in\BZ$. In other words, the
maps in (\ref{eq:exact-3-W}) are strict for the filtrations, which
shows (\ref{it:berger-c}).
\end{proof}

By \cite[Proposition 2.3.4]{Ber08} the quasi-inverse of the functor
$\BD_\st$ 
is given by
\begin{equation} \label{eq:def-DB}
\BD_B(D)=((\BB_{\log}\otimes_{K_0 }D)^{\varphi=1, N=0},
\Fil^0(\BB_{\dR }\otimes_{K_0 }D)).\end{equation}

For a filtered $E$-$(\varphi,N)$-module $D$ we put $$ \BX_\log(D)
=(\BB_\log\otimes_{K_0}D)^{\varphi=1, N=0} \  \ \text{and} \ \
\BX_\dR(D)= \BB_\dR \otimes_{K_0}D  / \Fil^0( \BB_\dR \otimes_{K_0}D
).
$$ If $\BD_B(D)=(W_e, W^+_\dR)$, then
$\BX_\log(D)=W_e$ and
$\BX_\dR(D)=(\BB_\dR\otimes_{\BB_e}W_e)/W^+_\dR$.


\section{$S$-$B$-pairs of rank $1$ and triangulations}
\label{sec:family-B-pairs}

\subsection{$S$-$B$-pairs of rank $1$}

Let $S$ be a Banach $E$-algebra.

For any $a\in S^\times$ we define a filtered $S$-$\varphi$-module
$D_a$ as follows. As a $K_0\otimes_{\BQ_p}S$-module,
$$D_a=K_0\otimes_{\BQ_p}S=\oplus_{\tau: K_0\hookrightarrow E}S
e_\tau;$$ the $\varphi\otimes 1$-semilinear action $\varphi$ on
$D_a$ satisfies
$$\varphi(e_{\mathrm{id}}) = e_{\varphi^{-1}}, \ \varphi(e_{\varphi^{-1}}) = e_{\varphi^{-2}}, \ \cdots, \ \varphi(e_{\varphi^{1-f}}) = a e_{\mathrm{id}};$$ the
descending filtration on $D_{a,K}=K\otimes_{\BQ_p}S$ is given by
$\Fil^0 D_{a,K}=D_{a,K}$ and $\Fil^1 D_{a,K}=0$.

\begin{lem} \label{lem:kisin} If $a\in S$ satisfies that $a-1$ is topologically nilpotent,
then there exists a unit $u_0\in \BB_\max\widehat{\otimes}_{K_0}S$
such that $\varphi^{[K_0:\BQ_p]}(u_0)= au_0$. Consequently
$$ \{x\in \BB_\max\widehat{\otimes}_{K_0}S: \varphi^{[K_0:\BQ_p]}(x)= ax\} = (\BB_{e,K_0}\widehat{\otimes}_{K_0}S) u_0 . $$
\end{lem}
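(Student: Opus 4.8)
The plan is to build $u_{0}$ as the exponential of a single ``universal'' period element, after transporting $a$ into the period ring. Throughout set $f=[K_{0}:\BQ_{p}]$, so that $q=p^{f}$ is the cardinality of the residue field $k$.

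First I would settle the scalar side. As $a-1$ is topologically nilpotent it has spectral radius $0$, so $\|(1-a)^{n}\|^{1/n}\to 0$; hence $\log a:=-\sum_{n\ge 1}(1-a)^{n}/n$ converges in $S$ (since $\|(1-a)^{n}/n\|\le n\,\|(1-a)^{n}\|\to 0$), the element $\log a$ again has spectral radius $0$ (in a commutative Banach algebra the quasinilpotent elements form a closed ideal, and $\log a$ lies in the closed ideal generated by $1-a$), and therefore $\exp(\log a)=\sum_{n\ge 0}(\log a)^{n}/n!$ converges in $S$ and equals $a$ (using $\|(\log a)^{n}/n!\|\le p^{n/(p-1)}\,\|(\log a)^{n}\|\to 0$).

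The core of the argument is to produce one element $\eta_{0}$ of a $p$-adically complete model of $\RB^{+}_{\max}$ with $\varphi^{f}(\eta_{0})=\eta_{0}+1$, i.e.\ to show that $\varphi^{f}-1$ is surjective there. Let $\RA^{+}_{\max}\subseteq\RB^{+}_{\max}$ be the $p$-adic completion of $\widetilde{\RA}^{+}\bigl[[\tilde{p}]/p\bigr]$, so $\RB^{+}_{\max}=\RA^{+}_{\max}[1/p]$; it is $\varphi$-stable and $p$-adically complete, and $\RA^{+}_{\max}/p\cong R[\bar u]$ is a polynomial ring over $R:=\widetilde{\RE}^{+}/(\tilde{p})$, with $\bar u$ the class of $[\tilde{p}]/p$. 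Since $\varphi([\tilde{p}]/p)=p^{\,p-1}\bigl([\tilde{p}]/p\bigr)^{p}\in p\,\RA^{+}_{\max}$, one gets $\varphi(\bar u)=0$ in $\RA^{+}_{\max}/p$, so $\varphi^{f}$ annihilates the ideal $(\bar u)$ and restricts on the constant subring $R$ to $x\mapsto x^{q}$. As $\widetilde{\RE}$ is algebraically closed of characteristic $p$, a valuation estimate shows that $x\mapsto x^{q}-x$ is surjective on $\widetilde{\RE}^{+}$, hence on its quotient $R$. Given $w\in\RA^{+}_{\max}/p$, I would write $w=w_{0}+w_{1}$ with $w_{0}\in R$ and $w_{1}\in(\bar u)$, choose $y\in R$ with $y^{q}-y=w_{0}$, and set $x:=y-w_{1}$; then $\varphi^{f}(x)=y^{q}$, whence $(\varphi^{f}-1)(x)=y^{q}-y+w_{1}=w$. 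So $\varphi^{f}-1$ is onto $\RA^{+}_{\max}/p$, and by $p$-adic completeness onto $\RA^{+}_{\max}$; taking $w=1$ yields $\eta_{0}\in\RA^{+}_{\max}$, of $p$-adic norm $\le 1$, with $\varphi^{f}(\eta_{0})=\eta_{0}+1$. I expect this to be the main difficulty, in particular pinning down $\RA^{+}_{\max}/p$ and the vanishing of $\bar u$ under $\varphi$ modulo $p$.

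Granting $\eta_{0}$, set $u_{0}:=\exp\!\bigl((\log a)\,(\eta_{0}\otimes 1)\bigr)=\sum_{n\ge 0}\tfrac{(\log a)^{n}}{n!}\,(\eta_{0}^{\,n}\otimes 1)$. Working inside $\RA^{+}_{\max}\widehat{\otimes}_{\mathcal{O}_{K_{0}}}\mathcal{O}_{S}$ (for a ring of definition $\mathcal{O}_{S}$ of $S$) one has $\|\eta_{0}^{\,n}\otimes 1\|\le 1$, and $\|(\log a)^{n}/n!\|\to 0$ by the first step, so the series converges in $\RB^{+}_{\max}\widehat{\otimes}_{K_{0}}S\subseteq\RB_{\max}\widehat{\otimes}_{K_{0}}S$ and $u_{0}$ is a unit, with inverse $\exp(-(\log a)(\eta_{0}\otimes 1))$. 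Since $\varphi^{f}$ restricts to the identity on $K_{0}$ it is $S$-linear and a continuous ring homomorphism on $\RB_{\max}\widehat{\otimes}_{K_{0}}S$ fixing $\log a$, so $\varphi^{f}(u_{0})=\exp\!\bigl((\log a)(\eta_{0}+1)\otimes 1\bigr)=\exp\!\bigl((\log a)(\eta_{0}\otimes 1)\bigr)\cdot\exp(\log a)=a\,u_{0}$, which is the first assertion. For the consequence, $x\in\RB_{\max}\widehat{\otimes}_{K_{0}}S$ satisfies $\varphi^{f}(x)=ax$ iff $\varphi^{f}(xu_{0}^{-1})=xu_{0}^{-1}$, i.e.\ iff $xu_{0}^{-1}\in(\RB_{\max}\widehat{\otimes}_{K_{0}}S)^{\varphi^{f}=1}$; and $\RB_{\max}^{\varphi^{f}=1}=\RB_{e}\otimes_{\BQ_{p}}K_{0}=\RB_{e,K_{0}}$ (a standard computation from $\RB_{e}=\RB_{\max}^{\varphi=1}$ and $K_{0}\otimes_{\BQ_{p}}K_{0}\cong\prod_{i}K_{0}$, on which $\varphi$ cyclically permutes the factors), while formation of $\varphi^{f}$-invariants commutes with $\widehat{\otimes}_{K_{0}}S$; hence this invariant ring is $\RB_{e,K_{0}}\widehat{\otimes}_{K_{0}}S$, and multiplying back by $u_{0}$ gives $\{x:\varphi^{f}(x)=ax\}=(\RB_{e,K_{0}}\widehat{\otimes}_{K_{0}}S)\,u_{0}$, as required.
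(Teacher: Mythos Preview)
There is a genuine gap in your first paragraph: the implication ``$a-1$ topologically nilpotent $\Rightarrow$ spectral radius $0$'' is false. In any nonarchimedean Banach algebra, $x^{n}\to 0$ only forces the spectral radius to be $<1$; take $S=E$ and $a-1=\pi_{E}$ a uniformizer. More damagingly, even when both $\log a$ and $\exp(\log a)$ converge one need not have $\exp(\log a)=a$. For $p=2$, $S=\BQ_{2}$, $a=3$ one has $\log(-1)=0$ in $\BQ_{2}$, so $\log 3=\log(-3)=\log(1-4)$ lies in $4\BZ_{2}$, and then $\exp(\log 3)=1-4=-3\neq 3$. In that case your $u_{0}=\exp\bigl((\log a)(\eta_{0}\otimes 1)\bigr)$ satisfies $\varphi^{f}(u_{0})=-a\,u_{0}$, not $a\,u_{0}$. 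The exponential route therefore needs the stronger hypothesis $\|a-1\|<p^{-1/(p-1)}$ (so that $a$ is genuinely in the image of $\exp$), which is not what the lemma assumes.

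The paper sidesteps all of this with a shorter and more robust construction. First, for your $\eta_{0}$ it simply uses the $\varphi$-compatible embedding $\widehat{\BQ_{p}^{\mathrm{ur}}}\hookrightarrow\RB_{\max}$: since $\varphi^{f}-1$ is surjective on $\widehat{\BQ_{p}^{\mathrm{ur}}}$ (Artin--Schreier on $\overline{\BF}_{p}$ plus $p$-adic completeness of $W(\overline{\BF}_{p})$), no analysis of $\RA^{+}_{\max}/p$ is needed. Second, instead of exponentiating, it iterates: choose $c_{0}=1$ and recursively $c_{i}\in\widehat{\BQ_{p}^{\mathrm{ur}}}$ with $(\varphi^{f}-1)c_{i}=c_{i-1}$ (one may take all $c_{i}$ in $W(\overline{\BF}_{p})$, hence bounded), and set $u_{0}=\sum_{i\ge 0}c_{i}(a-1)^{i}$. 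This converges in $\RB_{\max}\widehat{\otimes}_{K_{0}}S$ merely because $(a-1)^{i}\to 0$, is visibly a unit, and a one-line telescoping gives $\varphi^{f}(u_{0})=u_{0}+(a-1)u_{0}=a\,u_{0}$. Your formula is recovered as the special case $c_{i}=\binom{\eta_{0}}{i}$, i.e.\ the \emph{binomial} series $(1+(a-1))^{\eta_{0}}$ rather than the exponential; replacing $\exp$ by this binomial expansion would repair your argument under the stated hypothesis. Your deduction of the ``consequently'' clause is fine and is exactly what the paper leaves implicit.
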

\begin{proof} Let $\BQ_p^{\mathrm{ur}}$ be the completed unramified extension
of $\BQ_p$. Then there exists an inclusion
$\BQ_p^{\mathrm{ur}}\hookrightarrow \BB_\max$ that is compatible
with $\varphi$.

As $\varphi^{[K_0:\BQ_p]}-1$ is surjective on $\BQ_p^{\mathrm{ur}}$,
there exists a sequence $c_0=1, c_1, \cdots$ of elements in
$\BQ_p^{\mathrm{ur}}$ such that
$$(\varphi^{[K_0:\BQ_p]}-1)c_i=c_{i-1}$$ for $i\geq 1$. The image of
$c_i$ by the map
$$\BQ_p^{\mathrm{ur}}\hookrightarrow\BB_\max\rightarrow
\BB_\max\widehat{\otimes}_{K_0}S$$ is again denoted by $c_i$. Put
$$u_0=\sum_{i=0}^{\infty}c_i(a-1)^i.$$ Then $u_0$ is a unit and we have
$\varphi^{[K_0:\BQ_p]}u_0=au_0$.
\end{proof}

\begin{prop}\label{prop:S-B-pair} If $a\in S$ satisfies that $a-1$ is topologically
nilpotent, then $\BD_B(D_a)$ is an $S$-$B$-pair of rank $1$. Here
$\BD_B$ is the functor defined by $($\ref{eq:def-DB}$)$.
\end{prop}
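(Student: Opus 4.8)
The plan is to unwind the definition $\BD_B(D_a)=\bigl((\BB_\log\otimes_{K_0}D_a)^{\varphi=1,N=0},\ \Fil^0(\BB_\dR\otimes_{K_0}D_a)\bigr)$ and check the two conditions in the definition of an $S$-$B$-pair: that the $\BB_e$-part is free of rank $1$ (with a continuous semilinear $G_K$-action) and that the de Rham part is a $G_K$-stable $\BB^+_{\dR,E}$-lattice in the corresponding $\BB_\dR$-module. Since $N_{D_a}=0$ the condition $N=0$ is automatic and $\BB_\log$ can be replaced by $\BB_\max$; the $\varphi$-structure on $D_a$ is cyclic of length $f=[K_0:\BQ_p]$ with $\varphi^f$ acting as multiplication by $a$ on the line $Se_{\mathrm{id}}$. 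Writing $W_e:=(\BB_\max\widehat\otimes_{K_0}D_a)^{\varphi=1}$, the first step is to identify $W_e$ with $\{x\in\BB_\max\widehat\otimes_{K_0}S:\varphi^f(x)=ax\}$ via the cyclic basis $e_{\mathrm{id}},e_{\varphi^{-1}},\dots,e_{\varphi^{1-f}}$ (a $\varphi$-invariant vector is determined by its $e_{\mathrm{id}}$-component, which must satisfy exactly this twisted equation). Then Lemma~\ref{lem:kisin} applies directly: since $a-1$ is topologically nilpotent, there is a unit $u_0$ with $\varphi^f(u_0)=au_0$, and the lemma gives $W_e=(\BB_{e,K_0}\widehat\otimes_{K_0}S)u_0$. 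Because $\BB_{e,K_0}\widehat\otimes_{K_0}S\cong\BB_e\widehat\otimes_{\BQ_p}S=\BB_{e,S}$ (as $K_0/\BQ_p$ is unramified, $\BB_{e,K_0}=\BB_e\otimes_{\BQ_p}K_0$), this exhibits $W_e$ as free of rank $1$ over $\BB_{e,S}$, with $G_K$ acting semilinearly and continuously — the $G_K$-action being inherited from $\BB_\max$, which acts trivially on $D_a$ and on $S$.

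Next I would handle the de Rham side. Set $W^+_\dR:=\Fil^0(\BB_\dR\widehat\otimes_{K_0}D_a)$; by the description of the filtration on $D_{a,K}$ (namely $\Fil^0=D_{a,K}$, $\Fil^1=0$) together with the filtration on $\BB_\dR$ (with $\Fil^i\BB_\dR=t^i\BB^+_\dR$), the tensor-product filtration gives $W^+_\dR=\BB^+_{\dR}\widehat\otimes_{K_0}D_a=\BB^+_{\dR,S}\otimes_{\text{(suitable)}}(K\otimes_{K_0}D_a)$, which is visibly free of rank $[K:\BQ_p]/[K_0:\BQ_p]\cdot 1$ over $\BB^+_{\dR}\widehat\otimes_{K_0}S$ and hence of rank $1$ over $\BB^+_{\dR,S}$. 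It is $G_K$-stable because the filtration is $G_K$-stable. Finally one must verify the compatibility: that $\BB_\dR\otimes_{\BB_e}W_e\xrightarrow{\sim}\BB_\dR\otimes_{\BB^+_\dR}W^+_\dR$ as $G_K$-modules, so that $(W_e,W^+_\dR)$ is a genuine $B$-pair. This is where I would invoke the fact (from Proposition~\ref{prop:berger} and the surrounding discussion, in particular \cite[Proposition 2.3.4]{Ber08}) that $\BD_B$ is the quasi-inverse of $\BD_\st$ on the category of filtered $(\varphi,N)$-modules: the content of that equivalence is precisely that $\BD_B(D)$ is always a $B$-pair (semistable, in fact) of rank equal to $\dim_{K_0}D$, once $D$ is an honest filtered $(\varphi,N)$-module with $\varphi_D$ bijective. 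One checks $D_a$ qualifies: $\varphi_{D_a}$ is an isomorphism (it is cyclic with $a\in S^\times$), $N_{D_a}=0$ satisfies $N\varphi=p\varphi N$ trivially, and the filtration is exhaustive. The only subtlety is that \cite{Ber08} and Proposition~\ref{prop:berger} are stated over the coefficient field $E$, whereas here $S$ is a Banach $E$-algebra; so the real work is checking that Berger's construction is compatible with base change along $E\to S$, which reduces — by the freeness in Lemma~\ref{lem:kisin} and the flatness of the relevant period rings — to the known statement over $E$.

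I expect the main obstacle to be this last point: making the passage from $E$-coefficients to $S$-coefficients rigorous, i.e. showing that forming $\varphi$-invariants in $\BB_\max\widehat\otimes_{K_0}D_a$ commutes with the completed tensor product $\widehat\otimes_E S$ and that the resulting module is finite free over $\BB_{e,S}$ rather than merely finitely generated. Lemma~\ref{lem:kisin} is exactly the tool that circumvents a general base-change theorem: the explicit unit $u_0=\sum c_i(a-1)^i$ trivializes the $\varphi^f=a$-isocrystal over $\BB_\max\widehat\otimes_{K_0}S$, reducing everything to the case $a=1$, i.e. to $D_a=D_1$ being the "trivial" object whose $\BD_B$ is the unit $B$-pair $\bigl(\BB_{e,S},\ \BB^+_{\dR,S}\bigr)$ — and for that object all claims are immediate. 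So in outline: (i) reduce via $u_0$ to $a=1$; (ii) for $a=1$ observe $\BD_B(D_1)$ is the unit $S$-$B$-pair of rank $1$ by inspection; (iii) transport freeness and $G_K$-equivariance back through multiplication by $u_0$, using that $u_0$ is a $G_K$-fixed unit (it lies in $\BQ_p^{\mathrm{ur}}\widehat\otimes_{K_0}S$-image, on which $G_K$ acts through $\Gal$ of the residue field, but since $c_i\in\BQ_p^{\mathrm{ur}}$ and we only need the $\varphi$-equation, a small check shows the twist is harmless); (iv) conclude the de Rham lattice condition and the $\BB_\dR$-compatibility directly from the shape of the filtration on $D_a$, which does not depend on $a$.
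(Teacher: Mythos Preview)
Your approach is essentially the same as the paper's: decompose along the cyclic basis $e_{\mathrm{id}},e_{\varphi^{-1}},\dots,e_{\varphi^{1-f}}$, observe that $\varphi$-invariance forces the $e_{\mathrm{id}}$-component to satisfy $\varphi^{[K_0:\BQ_p]}(x)=ax$, and apply Lemma~\ref{lem:kisin}. The paper's proof stops right there, treating the de Rham lattice and the $\BB_\dR$-compatibility as immediate from the trivial filtration on $D_{a,K}$; your extended detour through the general $\BD_B$--$\BD_\st$ equivalence and the base-change worries is unnecessary, since Lemma~\ref{lem:kisin} is already stated and proved directly over $S$ and gives the freeness of $W_e$ outright (and note that $u_0$ need not be $G_K$-fixed for $W_e$ to carry a semilinear $G_K$-action --- the action is simply inherited from $\BB_\max$).
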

\begin{proof} For each $z\in \BB_\max\widehat{\otimes}_{\BQ_p}D_a$ we write $z=\sum c_\tau
e_\tau$ with $c_\tau\in \BB_\max\widehat{\otimes}_{K_0,\tau}S$. Then
$\varphi(z)=z$ if and only if
$\varphi(c_{\varphi^i})=c_{\varphi^{i-1}}$ ($i=1,\cdots,
[K_0:\BQ_p]$) and
$\varphi^{[K_0:\BQ_p]}(c_{\mathrm{id}})=ac_{\mathrm{id}}$. Our
assertion follows from Lemma \ref{lem:kisin}.
\end{proof}

For any $a\in S^\times$, let $\delta_a: K^\times \rightarrow
S^\times$ denote the character such that $\delta_a(\pi_K)=a$ and
$\delta_a|_{\mathfrak{o}_K^\times}=1$.

\begin{rem} In the case of $S=E$, for any $u\in E^\times$,  $\BD_B(D_u)$ coincides with the $E$-$B$-pair
$W(\delta_u)$ defined in \cite{Naka} (see \cite[\S 1.4]{Naka}). From
now on the base change of $W(\delta_u)$ from $E$ to $S$ is again
denoted by $W(\delta_u)$.
\end{rem}

Let $\delta: K^\times \rightarrow S^\times$ be a continuous
character such that $\delta(\pi_K)$ is of the form
$\delta(\pi_K)=au$, where $u\in E^\times$ and $a\in S$ satisfies
that $a-1$ is topologically nilpotent. We call such a character {\it
a good character}. Let $W_a$ be the resulting $S$-$B$-pair in
Proposition \ref{prop:S-B-pair}. Let $\delta'$ be the unitary
continuous character $K^\times\rightarrow E^\times$ such that
$\delta'|_{\mathfrak{o}^\times_K}=\delta|_{\mathfrak{o}^\times_K}$
and $\delta'(\pi_K)=1$. By local class field theory, this induces a
continuous character $\widetilde{\delta}': G_K\rightarrow S^\times$
such that $\widetilde{\delta}'\circ \mathrm{rec}_K=\delta'$. Then we
put $$ W(\delta)=W(S(\widetilde{\delta}'))\otimes W(\delta_u)
\otimes W_a, $$ where $W(S(\widetilde{\delta}'))$ is the
$S$-$B$-pair attached to the Galois representation
$S(\widetilde{\delta}')$.

If $\delta$ is a continuous character $\delta: K^\times\rightarrow
S^\times$, we write $\log(\delta)$ for the logarithmic of
$\delta|_{\mathfrak{o}_K^\times}$, which is a $\BZ_p$-linear
homomorphism $\log(\delta): K \rightarrow S$.

For any $\tau\in\mathrm{Emb}(K,E)$ we use the same notation $\tau$
to denote the composition of $\tau: K\hookrightarrow E$ and $E
\hookrightarrow S$. Then $\{\tau: K\hookrightarrow S\}$ is a basis
of $\Hom_{\BZ_p}(E, S)$ over $S$. Write
$\log(\delta)=\sum_{\tau}k_\tau \tau$, $k_\tau\in S$. We call
$(k_\tau)_{\tau}$ the {\it weight vector} of $\delta$ and denote it
by $\vec{w}(\delta)$. We use $w_\tau(\delta)$ to denote $k_\tau$.

\begin{rem} Let $S$ be an affinoid algebra over $E$. For any continuous character $\delta: K^\times
\rightarrow S^\times$ and any point $z_0$ of $\Max(S)$, there exists
an affinoid neighborhood $U=\Max(S')$ of $z_0$ in $\Max(S)$ such
that the restriction of $\delta$ to $U$ is good.
\end{rem}

\begin{lem}\label{lem:varph-gamma-fil} Let $\delta$ be a character of $K^\times$ with values in
$S=E[Z]/(Z^2)$, $\bar{\delta}$ the character of $K^\times$ with
values in $E$ obtained from $\delta$ modulo $(Z)$. Write
$\delta=\bar{\delta}_S(1+Z\epsilon)$, where $\bar{\delta}_S$ is the
character $K^\times \xrightarrow{\bar{\delta}} E^\times
\hookrightarrow S^\times$. Let $\epsilon'$ be the additive character
of $G_K$ such that $\epsilon' \circ\mathrm{rec}_K(p)=0$ and
$\epsilon'\circ\mathrm{rec}_K|_{\fo_K^\times}=\epsilon|_{\fo_K^\times}$.

Assume that $W(\bar{\delta})$ is crystalline and
$\varphi^{[K_0:\BQ_p]}$ acts on $\BD_\cris(W(\bar{\delta}))$ by
$\alpha$. Then there is a nonzero element $$ x\in
(\BB_{\max,E}\otimes_{\BB_{e,E}}W(\delta)_e)^{\varphi^{[K_0:\BQ_p]}=\alpha
(1+Zv_p(\pi_K)\epsilon(p)), G_K= (1+Z\epsilon')} $$ whose reduction
modulo $Z$ is a basis of $\BD_\st(W(\bar{\delta}))$ over
$K\otimes_{\BQ_p}E$.
\end{lem}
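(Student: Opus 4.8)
The plan is to work infinitesimally over $S = E[Z]/(Z^2)$ and to build the desired element $x$ by deforming a basis of $\BD_\st(W(\bar\delta))$ step by step, using the explicit description of $W(\delta)$ as the tensor product $W(S(\widetilde{\delta}'))\otimes W(\delta_u)\otimes W_a$ given just before the statement. First I would observe that since $W(\bar\delta)$ is crystalline of rank $1$, $\BD_\cris(W(\bar\delta)) = \BD_\st(W(\bar\delta))$ is free of rank $1$ over $K\otimes_{\BQ_p}E$ with $\varphi^{[K_0:\BQ_p]}$ acting by $\alpha$; pick a basis vector $\bar x$ and pick any lift $\tilde x\in \BB_{\max,E}\otimes_{\BB_{e,E}}W(\delta)_e$ of $\bar x$. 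The task is then to correct $\tilde x$ by an element of $Z\cdot(\BB_{\max,E}\otimes W(\delta)_e)$ so that it becomes an eigenvector for $\varphi^{[K_0:\BQ_p]}$ with the prescribed eigenvalue $\alpha(1+Zv_p(\pi_K)\epsilon(p))$ and is fixed by $G_K$ up to the twist $(1+Z\epsilon')$.

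The key step is to analyze the two twisting factors separately. The factor $W(\delta_u)\otimes W_a$ contributes the Frobenius deformation: by Proposition \ref{prop:S-B-pair} and Lemma \ref{lem:kisin}, $W_a = \BD_B(D_a)$ with $a$ such that $a-1$ is topologically nilpotent, and in the $Z^2 = 0$ setting the unit $u_0 = \sum c_i(a-1)^i$ of Lemma \ref{lem:kisin} truncates to $u_0 = 1 + c_1(a-1)$, so that $W_a$ acquires a canonical $\varphi$-eigenbasis whose eigenvalue under $\varphi^{[K_0:\BQ_p]}$ is exactly $a$. Writing $\delta(\pi_K) = au$, one reads off from the construction of $W(\delta)$ that the $\varphi^{[K_0:\BQ_p]}$-eigenvalue picked up is $\alpha\cdot a$; matching this with $\alpha(1+Zv_p(\pi_K)\epsilon(p))$ amounts to the identity $a = 1 + Zv_p(\pi_K)\epsilon(p)$, which I would verify from $\delta = \bar\delta_S(1+Z\epsilon)$, the normalization $\delta_a(\pi_K) = a$, $\delta_a|_{\fo_K^\times} = 1$, and $\epsilon'\circ\mathrm{rec}_K(p) = 0$ (so that the whole ramified part of the deformation at $\pi_K$ is carried by $a$). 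The factor $W(S(\widetilde{\delta}'))$ contributes the Galois twist: $\widetilde{\delta}' = 1 + Z\,(\text{additive character})$ modulo $Z^2$, and by definition $\widetilde{\delta}'\circ\mathrm{rec}_K = \delta'$ where $\delta'$ is the unitary character agreeing with $\delta$ on $\fo_K^\times$ and trivial on $\pi_K$; unwinding through $\mathrm{rec}_K$ identifies this additive character with $\epsilon'$, so the basis vector of $W(S(\widetilde{\delta}'))$ transforms under $G_K$ precisely by $1 + Z\epsilon'$.

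Finally I would assemble: tensoring the $\varphi$-eigenbasis of $W(\delta_u)\otimes W_a$ with the tautological Galois-eigenbasis of $W(S(\widetilde{\delta}'))$ and with the lift $\tilde x$ of $\bar x$ produces an element $x\in (\BB_{\max,E}\otimes_{\BB_{e,E}}W(\delta)_e)$ whose $\varphi^{[K_0:\BQ_p]}$-eigenvalue is $\alpha(1+Zv_p(\pi_K)\epsilon(p))$ and whose $G_K$-action is through $1+Z\epsilon'$, and whose reduction mod $Z$ is $\bar x$, hence nonzero and a basis of $\BD_\st(W(\bar\delta))$. The nonvanishing is automatic from the reduction statement. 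I expect the main obstacle to be the bookkeeping in the second-to-last step: keeping straight the three conventions — the sign/normalization in the Hodge–Tate weight of $\chi_\cyc$, the direction of $\mathrm{rec}_K$ fixed in the Notations, and Nakamura's normalization of $W(\delta_u)$ — so that the twist by $\epsilon'$ (with $\epsilon'\circ\mathrm{rec}_K(p) = 0$) and the twist by $a$ (absorbing $\epsilon(p)$, scaled by $v_p(\pi_K)$) land with exactly the coefficients asserted. Everything else is a routine consequence of the rank-$1$ computations already recorded in Lemma \ref{lem:kisin} and Proposition \ref{prop:S-B-pair}.
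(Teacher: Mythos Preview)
Your tensor-decomposition strategy is exactly what the paper does, but you have chosen the wrong factorization and this makes your eigenvalue identifications fail. The paper's entire proof is the single observation
\[
W(\delta)\;=\;W(\bar\delta_S)\;\otimes\;W_{a'}\;\otimes\;W(1+Z\epsilon'),\qquad a'=1+Zv_p(\pi_K)\epsilon(p),
\]
which first peels off the full central fibre $W(\bar\delta_S)$ and then splits the infinitesimal part into a pure Frobenius twist $W_{a'}$ and a pure Galois twist by $1+Z\epsilon'$. The element $x$ is then $\bar x_S$ tensored with the canonical generators of the other two factors, and each of the three asserted properties is read off from a single factor.

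You instead work with the \emph{defining} factorization $W(\delta)=W(S(\widetilde{\delta}'))\otimes W(\delta_u)\otimes W_a$. There one is forced to take $u=\bar\delta(\pi_K)$ and hence $a=1+Z\epsilon(\pi_K)$, which in general differs from $1+Zv_p(\pi_K)\epsilon(p)$: writing $p=\pi_K^{e}v$ with $v\in\fo_K^\times$ gives $v_p(\pi_K)\epsilon(p)=\epsilon(\pi_K)+e^{-1}\epsilon(v)$. Likewise the additive Galois character carried by $\widetilde{\delta}'$ is the $\eta$ normalized by $\eta\circ\mathrm{rec}_K(\pi_K)=0$, not $\epsilon'$ (which is normalized by $\epsilon'\circ\mathrm{rec}_K(p)=0$). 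These two discrepancies do cancel, but only after one regroups into the paper's factorization, which you have not carried out. Separately, your final assembly step tensors four objects --- bases of $W(\delta_u)$, $W_a$, $W(S(\widetilde{\delta}'))$, and the lift $\tilde x$ --- into a rank-one module that is already the tensor product of the first three; the role you want $\tilde x$ to play is really that of $\bar x_S$ sitting in the $W(\bar\delta_S)$ factor of the \emph{paper's} decomposition, not a lift into all of $W(\delta)$.
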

\begin{proof} This follows from the fact that
$W(\delta)= W(\bar{\delta}_S)\otimes
W_{\delta_{1+Zv_p(\pi_K)\epsilon(p)}}\otimes W(1+Z\epsilon') .$
\end{proof}

\subsection{Triangulations and refinements}

Now let $S$ be an affinoid algebra over $E$. For any open affinoid
subset $U$ of $S$ and any $S$-$B$-pair $W$ let $W_U$ denote the
restriction to $U$ of $W$.

\begin{defn} Let $W$ be an $S$-$B$-pair of rank $n$, $z_0$ a point of $\Max(S)$. If there
is
\begin{quote}
$\bullet$ an affinoid neighborhood $U=\Max(S_U)$ of $z_0$,

$\bullet$ a strictly increasing filtration $$ \{0\}= \Fil_0 W_U
\subset \Fil_1 W_U \subset\cdots \subset \Fil_n W_U = W_U $$ of
saturated free sub-$S_U$-$B$-pairs, and

$\bullet$ $n$ good continuous characters $\delta_i:
\BQ_p^\times\rightarrow S_U^\times$ 

\end{quote}
\noindent such that for any $i=1,\cdots, n$,
$$\Fil_i W_U/\Fil_{i-1} W_U \simeq W(\delta_i),$$ 
we say that $W$ is {\it locally triangulable at $z_0$};  we call
$\Fil_\bullet$ a {\it local triangulation} of $W$ at $z_0$, and call
$(\delta_1,\cdots, \delta_n)$ the {\it local triangulation
parameters} attached to $\Fil_\bullet$.
\end{defn}

Please consult \cite{Cz2010,Be2011} for more knowledge on
triangulations.

To discuss the relation between triangulations and refinements, we
restrict ourselves to the case of $S=E$.

Let $D$ be a filtered $E$-$(\varphi,N)$-module of rank $n$. The
operator $\varphi^{[K_0:\BQ_p]}$ on $D$ is
$K_0\otimes_{\BQ_p}E$-linear. We assume that the eigenvalues of
$\varphi^{[K_0:\BQ_p]}:D\rightarrow D$ are all in
$K_0\otimes_{\BQ_p}E$, i.e. there exists a basis of $D$ over
$K_0\otimes_{\BQ_p}E$ such the matrix of $\varphi^{[K_0:\BQ_p]}$
with respect to this basis is upper-triangular.

Following Mazur \cite{Mazur} we define a {\it refinement} of $D$ to
be a filtration on $D$
$$ 0 = \CF_0 D \subset \CF_1 D \subset \cdots \subset \CF_n D= D $$ by
$E$-subspaces stable by $\varphi_D$ and $N_D$, such that each factor
$\mathrm{gr}^\CF_i D= \CF_i D/\CF_{i-1} D$ ($i=1,\cdots, n$) is of
rank $1$ over $K_0\otimes_{\BQ_p}E$. Any refinement fixes an
ordering $\alpha_1,\cdots, \alpha_n$ of eigenvalues of
$\varphi^{[K_0:\BQ_p]}$ and an ordering $\vec{k}_1,\cdots,
\vec{k}_n$ of Hodge-Tate weights of $K\otimes_{K_0}D$ taken with
multiplicities such that the eigenvalue of $\varphi^{[K_0:\BQ_p]}$
on $\mathrm{gr}^\CF_i D$ is $\alpha_i$ and the Hodge-Tate weight of
$\mathrm{gr}^\CF_i\CD$ is $\vec{k}_i$.

We have the following analogue of \cite[Proposition 1.3.2]{Ben}.

\begin{prop} Let $W$ be a semistable $E$-$B$-pair,
$D=\BD_\st(W)$.
\begin{enumerate}
\item\label{it:trian-refine-a}
The equivalence of categories between the category of semistable
$E$-$B$-pairs and the category of filtered $E$-$(\varphi,N)$-modules
induces a bijection between the set of triangulations on $W$ and the
set of refinements on $D$.
\item\label{it:trian-refine-b}
If $(\Fil_i W)$ is a triangulation of $W$ with triangulation
parameters $(\delta_1,\cdots, \delta_n)$ that correspond to a
refinement $\CF_\bullet D$ of $D$ with the ordering of Hodge-Tate
weights being $\vec{k}_1, \cdots, \vec{k}_n$, then
$\delta_i=\tilde{\delta}_i\prod\limits_{\tau\in\mathrm{Emb}(K,E)}\tau(x)^{k_{i,\tau}}$,
where $\tilde{\delta}_i$ is a smooth character.
\end{enumerate}
\end{prop}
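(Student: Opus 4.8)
The plan is to exploit the equivalence of categories in Proposition~\ref{prop:berger}(\ref{it:berger-a}) together with the explicit description of rank-$1$ objects already developed in this section. For part (\ref{it:trian-refine-a}), a triangulation $\Fil_\bullet W$ of $W$ is by definition a filtration by saturated sub-$E$-$B$-pairs with rank-$1$ graded pieces; since $W$ is semistable, each $\Fil_i W$ is again semistable by Proposition~\ref{prop:berger}(\ref{it:berger-b}), and the functor $\BD_\st$ is exact and compatible with the formation of subobjects and quotients by Proposition~\ref{prop:berger}(\ref{it:berger-c}) and (\ref{it:berger-d}). Hence $\BD_\st(\Fil_\bullet W)$ is a filtration of $D=\BD_\st(W)$ by sub-$E$-$(\varphi,N)$-modules with graded pieces of rank $1$ over $K_0\otimes_{\BQ_p}E$; as $N_D$ and $\varphi_D$ are carried along by functoriality, this is exactly a refinement in the sense of Mazur. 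Conversely a refinement $\CF_\bullet D$ consists of $(\varphi,N)$-stable subspaces, and applying the quasi-inverse $\BD_B$ of~(\ref{eq:def-DB}) produces a filtration of $W$ by sub-$B$-pairs; one checks the subobjects are saturated because $\BD_B$ of a rank-$1$ filtered $(\varphi,N)$-module is a rank-$1$ $B$-pair (the rank-$1$ computations of Propositions~\ref{prop:S-B-pair} and the remark on $W(\delta_u)$ identify these graded pieces, and saturation is the statement that the $B^+_\dR$-lattice on the quotient is induced, which follows from exactness of $\BD_B$ on the de~Rham side, cf.\ the strictness argument in the proof of Proposition~\ref{prop:berger}). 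The two constructions are mutually inverse because $\BD_\st$ and $\BD_B$ are, so this gives the claimed bijection.

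For part (\ref{it:trian-refine-b}), I would first identify which rank-$1$ semistable $E$-$B$-pairs arise as graded pieces. A rank-$1$ filtered $E$-$(\varphi,N)$-module over $K$ necessarily has $N=0$ (since $N\varphi=p\varphi N$ forces $N$ nilpotent, hence zero in rank $1$), is determined by the scalar $\alpha_i$ by which $\varphi^{[K_0:\BQ_p]}$ acts and by the jumps $\vec{k}_i=(k_{i,\tau})_\tau$ of the filtration on each $D_\tau$. So $\gr^\CF_i D$ is the filtered $(\varphi,N)$-module obtained from $D_{\alpha_i}$ (in the notation of Section~\ref{sec:family-B-pairs}, after twisting the filtration jump to be $k_{i,\tau}$ in embedding $\tau$). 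Applying $\BD_B$ and comparing with the explicit construction of $W(\delta)$: the character $\delta_i$ must satisfy $\delta_i(\pi_K)=\alpha_i$ (up to the unramified normalization) and must have weight vector $\vec{w}(\delta_i)=\vec{k}_i$. Now $\prod_{\tau}\tau(x)^{k_{i,\tau}}$ is precisely the character of $K^\times$ whose weight vector is $\vec{k}_i$ and whose value on $\pi_K$ is $\prod_\tau \tau(\pi_K)^{k_{i,\tau}}$; dividing $\delta_i$ by it yields a character $\tilde\delta_i$ with trivial weight vector, i.e.\ one that is locally constant on $\fo_K^\times$ and whose value on $\pi_K$ is a unit times $\alpha_i\prod_\tau\tau(\pi_K)^{-k_{i,\tau}}$ — that is, a smooth (locally constant) character. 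Writing this out carefully, using the decomposition~(\ref{eq:vector-element}) and the definition of $\log(\delta)$ and $\vec w(\delta)$, gives the stated factorization.

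The main obstacle I anticipate is not the categorical bookkeeping but the \emph{saturation} condition in part (\ref{it:trian-refine-a}): one must verify that the sub-$B$-pairs produced from a refinement have the property that $W_\dR^+$ of the quotient is the image of $W_\dR^+$ (equivalently, that $\Fil_i W$ is a direct summand of $W$ as a $B^+_\dR$-lattice after base change), and dually that $\BD_\st$ of a saturated sub-$B$-pair inherits a filtration compatible with the refinement. This amounts to checking that the short exact sequences of $B$-pairs coming from a refinement induce strict exact sequences of filtered $(\varphi,N)$-modules; the proof of Proposition~\ref{prop:berger}(\ref{it:berger-c}) already establishes exactly this strictness (via the duality identity $d_i(W)=\dim_K\BD_\dR(W)-d_{1-i}(W^*)$), so the obstacle is really in organizing that machinery rather than proving anything new. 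The secondary subtlety is tracking the unramified normalization factor relating $\delta_i(\pi_K)$ to $\alpha_i$ through the tensor factor $W(S(\tilde\delta'))$ in the definition of $W(\delta)$; this is a matter of unwinding definitions, absorbed into the smooth character $\tilde\delta_i$.
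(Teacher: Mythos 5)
Your argument is correct and follows essentially the same route as the paper: the paper's entire proof of part (a) is the single observation that $\BD_\st$ is exact, and your discussion of saturation and strictness is precisely the content already established in the proof of Proposition \ref{prop:berger}(\ref{it:berger-c}). For part (b) the paper simply cites \cite[Lemma 4.1]{Naka}, and your explicit analysis of the rank-$1$ graded pieces (forcing $N=0$, matching the $\varphi^{[K_0:\BQ_p]}$-eigenvalue and the weight vector, and noting that a continuous character of $K^\times$ with vanishing weight vector is locally constant, hence smooth) is a correct reconstruction of that cited lemma rather than a genuinely different approach.
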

\begin{proof} Assertion (\ref{it:trian-refine-a}) follows from the
fact that $\BD_\st$ is an exact. Assertion (\ref{it:trian-refine-b})
follows from \cite[Lemma 4.1]{Naka}.
\end{proof}

\section{Cohomology Theory} \label{sec:coh}
\subsection{Cohomology of $(\varphi,\Gamma_K)$-modules and cohomology of
$B$-pairs} \label{sec:coh-B-pair}

Let $M$ be a $(\varphi,\Gamma_K)$-module. Assume that $\Gamma_K$ has
a topological generator $\gamma$. Define the cohomology
$H^\bullet_\PGam(M)$ by the complex $C^\bullet(M)$ defined by
\[ C^0(M)=M \xrightarrow{(\gamma-1, \varphi-1)} C^1(M)=M\oplus M \rightarrow C^2(M)=M,
\] where the map $C^1(M) \rightarrow C^2(M)$ is given by $(x,y)\mapsto
(\varphi-1)x-(\gamma-1)y$. Denote the kernel of $C^1(M)\rightarrow
C^2(M)$ by $Z^1(M)$.

There is a one-to-one correspondence between $H^1(M)$ and the set of
extensions of $M_0$ by $M$ in the category of
$(\varphi,\Gamma_K)$-modules, where $M_0=\BB^{\dagger}_{\rig,K}e_0$
is the trivial $(\varphi,\Gamma_K)$-module with
$\varphi(e_0)=\gamma(e_0)=e_0$. Let $\tilde{M}$ be an extension of
$M_0$ by $M$, and let $\tilde{e}$ be any lifting of $e_0$ in
$\tilde{M}$. Then the element in $H^1(M)$ corresponding to the
extension $\tilde{M}$ is the class of $((\gamma-1)\tilde{e},
(\varphi-1)\tilde{e})\in Z^1(M)$.

In \cite{Naka} Nakamura introduced a cohomology for $B$-pairs and
use it to compute the cohomology of $(\varphi,\Gamma_K)$-modules.

If $W=(W_e, W^+_\dR)$ is an $E$-$B$-pair, let $C^\bullet(W)$ be the
complex of $G_K$-modules defined by
$$ C^0(W):=W_e \rightarrow C^1(W):= W_\dR/W^+_\dR.
$$ Here, $W_e\rightarrow W_\dR/W^+_\dR$ is the natural map.

\begin{defn} Let $W=(W_e, W^+_\dR)$ be an $E$-$B$-pair. We
define the {\it Galois cohomology of $W$} by $H^i_B(W):= H^i(G_K,
C^\bullet(W))$.
\end{defn}

By definition there is a long exact sequence \begin{equation}
\label{eq:long-exact-sq}
\cdots \rightarrow H^i_B(W) \rightarrow
H^i(G_K, W_e) \rightarrow H^i(G_K, W_\dR/W^+_\dR) \rightarrow \cdots
.
\end{equation}

For a $G_K$-module $M$ put $C^0(M)=M$ and let $C^i(M)$ be the space
of continuous functions from $(G_K)^{\times i}$ to $M$. Let
$\delta_0: C^0(M)\rightarrow C^1(M)$ be the map $x\mapsto (g \mapsto
g(x)-x)$ and let $\delta_1: C^1(M)\rightarrow C^2(M)$ be the map $f
\mapsto ((g_1,g_2)\mapsto f(g_1g_2)-f(g_1)-g_1f(g_2))$.

Nakamura \cite{Naka} showed that $H^1_B(W)$ is isomorphic to
$\ker(\tilde{\delta}_1)/\mathrm{im}(\tilde{\delta}_0)$, where
$\tilde{\delta_0}$ and $\tilde{\delta}_1$ are defined by
\begin{eqnarray*} \tilde{\delta}_0: C^0(W_e)\oplus C^0(W^+_\dR)
& \rightarrow & C^1(W_e)\oplus C^1(W^+_\dR)\oplus C^0(W_\dR) :\\
(x,y) & \mapsto & (\delta_0(x), \delta_0(y), x-y), \\
\tilde{\delta}_1: C^1(W_e)\oplus C^1(W^+_\dR)\oplus C^0(W_\dR)
& \rightarrow &  C^2(W_e)\oplus C^2(W^+_\dR)\oplus C^1(W_\dR):\\
(f_1,f_2, x) & \mapsto & (\delta_1(f_1), \delta_1(f_2),
f_1-f_2-\delta_0(x)).
\end{eqnarray*}
The map $H^1_B(W)\rightarrow H^1(G_K, W_e)$ is induced by the
forgetful map $$ C^1(W_e)\oplus C^1(W^+_\dR)\oplus
C^0(W_\dR)\rightarrow C^1(W_e). $$

There is a one-to-one correspondence between $H^1(G_K,W)$ and the
set of extensions of $W_0$ by $W$ in the category of $E$-$B$-pairs.
Here, $W_0=(\BB_e\otimes_{\BQ_p}E, \BB^+_{\dR}\otimes_{\BQ_p}E)$ is
the trivial $E$-$B$-pair. Let $\tilde{W}=(\tilde{W}_e,
\tilde{W}^+_\dR)$ be an extension of $W_0$ by $W$. Let
$(\tilde{w}_e, \tilde{w}^+_\dR)$ be a lifting in $\tilde{W}$ of
$(1,1)\in W_0$. Then the element in $H^1_B(W)$ corresponding to the
extension $\tilde{W}$ is just the class of $( (\sigma\mapsto
(\sigma-1)\tilde{w}_e), (\sigma\mapsto (\sigma-1)\tilde{w}^+_\dR),
\tilde{w}_e-\tilde{w}^+_\dR)\in \ker(\tilde{\delta}_1)$.

By Proposition \ref{thm:berger} there is a one-to-one correspondence
between $\Ext(M_0,M)$ and $\Ext(W_0, W(M))$. It induces a natrual
isomorphism
$$ i_M: H^1_{\PGam}(M) \rightarrow H^1_B( W(M)). $$

\subsection{$1$-cocycles from infinitesimal
deformations} \label{ss:cocycle-inform}

Let $S$ be the $E$-algebra $E[Z]/(Z^2)$, $\tilde{M}$ an
$S$-$(\varphi,\Gamma_K)$-module. Let $\{e_1, \cdots, e_n\} $ be an
$S$-basis of $\tilde{M}$, $\{e^*_1, \cdots, e^*_n\}$ the dual basis
of $\tilde{M}^*$. Put $M=\tilde{M}\otimes_S E$ and
$M^*=\tilde{M}^*\otimes_SE$. Let $e_{i,z}$ denote $e_i \ \mathrm{
mod } \ Z$, and $e^*_{j,z} $ denote $e^*_j \ \mathrm{ mod } \ Z$.
Then $\{e_{1,z},\cdots, e_{n,z}\}$ is an $E$-basis of $M$, and
$\{e^*_{1,z},\cdots, e^*_{n,z}\}$ is an $E$-basis of $M^*$.

The matrices of $\varphi$ and $\gamma$ with respect to
$\{e_1,\cdots, e_n\}$ are denote by $\tilde{A}_\varphi$ and
$\tilde{A}_\gamma$ respectively, so that $ \varphi(e_j) = \sum_{i}
(\tilde{A}_{\varphi})_{ij} e_i$ and $\gamma (e_j)=\sum_i
(\tilde{A}_\gamma)_{ij} e_i$. Write $\tilde{A}_\varphi = (I_n+
ZU_\varphi)A_\varphi$ and $\tilde{A}_\gamma = (I_n+
ZU_\gamma)A_\gamma$. Put
$$  c_{\PGam}(\tilde{M})=( \sum\limits_{i,j} (U_{\varphi})_{ i,j} e^*_{j,z}\otimes e_{i,z}  , \sum\limits_{i,j}
(U_{\gamma})_{ i,j} e^*_{j,z}\otimes e_{i,z} ).$$

Write $\BD_B(\tilde{M})=(\tilde{W}_e, \tilde{W}^+_\dR)$,
$\BD_B(M)=W$ and $\BD_B(M^*)=W^*$.

Let $f_1, \cdots, f_n$ be a basis of $\tilde{W}_e$ over $\BB_{e,E}$,
and let
 $g_1,\cdots, g_n$ be a basis of
$\tilde{W}^+_\dR$ over $\BB^+_{\dR,E}$. We write the matrix of
$\sigma\in G_K$ with respect to the basis $\{f_1, \cdots, f_n\}$ by
$(I_n + Z U_{e,\sigma})A_{e,\sigma}$, and the matrix of $\sigma$
with respect to the basis $\{g_1, \cdots, g_n\}$ by $(I_n + Z
U^+_{\dR,\sigma})A^+_{\dR,\sigma}$. Here, $$ U_{e, \sigma} \in
\mathrm{M}_{n }(\BB_{e,E}), \ U^+_{\dR,\sigma}\in \mathrm{
M}_{n}(\BB^+_{\dR,E}), \ A_{e, \sigma} \in \GL_{n}(\BB_{e,E}), \
\text{and} \  A^+_{\dR,\sigma}\in \GL_{n}(\BB^+_{\dR,E}). $$ Write
$(f_1, \cdots, f_n) = (g_1, \cdots, g_n) (I_n +Z U_\dR)A_\dR$ and
put {\small
$$c_{B}(\tilde{M}) =\Big( (\sigma\mapsto \sum_{i,j}(U_{e, \sigma})_{ij}f_{j,z}^*\otimes f_{i,z}),
(\sigma\mapsto \sum_{i,j}(U^+_{\dR, \sigma})_{ij}g_{j,z}^*\otimes
g_{i,z}),  \sum_{i,j}(U_{\dR})_{ij}g_{j,z}^*\otimes g_{i,z} \Big).
$$ }

\begin{prop}
\begin{enumerate}
\item\label{it:PGam-B-a} $c_\PGam(\tilde{M})$ is in $Z^1(M^*\otimes
M)$.
\item\label{it:PGam-B-b} $c_B(\tilde{M})$ is in $\ker(\tilde{\delta}_{1, W^*\otimes W})$.
\item\label{it:PGam-B-c} We have $i_M([c_\PGam (\tilde{M})])=[c_B(\tilde{M})]$.
\end{enumerate}
\end{prop}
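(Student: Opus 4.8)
The plan is to verify the three assertions essentially by unwinding the cocycle definitions and matching them under Berger's equivalence. For part (\ref{it:PGam-B-a}), the key point is that $\tilde{M}$ being a genuine $S$-$(\varphi,\Gamma_K)$-module forces the compatibility $\varphi\gamma = \gamma\varphi$ (up to the identification coming from $\mathrm{id}_{\BB^\dagger_{\rig,K}}\otimes\varphi$), which at the level of matrices reads $\tilde{A}_\varphi \cdot \varphi(\tilde{A}_\gamma) = \tilde{A}_\gamma \cdot \gamma(\tilde{A}_\varphi)$. Substituting $\tilde{A}_\varphi = (I_n+ZU_\varphi)A_\varphi$ and $\tilde{A}_\gamma=(I_n+ZU_\gamma)A_\gamma$, expanding modulo $Z^2$, and using that $A_\varphi, A_\gamma$ already satisfy the same relation (they define $M$), the first-order term gives precisely the cocycle condition $(\varphi-1)\cdot(\text{$\gamma$-component}) = (\gamma-1)\cdot(\text{$\varphi$-component})$ in $M^*\otimes M$, i.e. that the pair $c_\PGam(\tilde M)$ lies in $Z^1$. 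One must be careful that $\varphi$ and $\gamma$ act on $M^*\otimes M \cong \End(M)$ by conjugation (using $A_\varphi$, $A_\gamma$), so the translated first-order terms are exactly $A_\varphi U_\varphi A_\varphi^{-1}$ etc.; this is the standard computation and is routine once set up.

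For part (\ref{it:PGam-B-b}), the same idea applies on the $B$-pair side. The data $(\tilde W_e, \tilde W^+_\dR)$ together with the transition matrix relating the two bases must be $G_K$-equivariant and compatible, which translates into cocycle-type identities for $A_{e,\sigma}$, $A^+_{\dR,\sigma}$ and $A_\dR$: the $\sigma\mapsto A_{e,\sigma}$ is a cocycle valued in $\GL_n(\BB_{e,E})$ (and likewise for $\dR$), and the compatibility $(f_\bullet) = (g_\bullet)(I_n+ZU_\dR)A_\dR$ must be respected by every $\sigma$. Expanding each to first order in $Z$ as before yields exactly the three components of $\ker(\tilde\delta_{1,W^*\otimes W})$: the first two say $U_{e,\bullet}$ and $U^+_{\dR,\bullet}$ are $1$-cocycles for $G_K$ acting on $\mathrm{M}_n(\BB_{e,E})$ resp. $\mathrm{M}_n(\BB^+_{\dR,E})$ by conjugation, and the third records that $\sigma\mapsto U_{e,\sigma} - U^+_{\dR,\sigma}$ differs from $\delta_0$ of the transition term $U_\dR$ by the appropriate conjugation. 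This is again a mechanical first-order expansion; I would just be careful about which base ($\BB_{e,E}$, $\BB^+_{\dR,E}$, $\BB_{\dR,E}$) each term lives over and that the forgetful/comparison maps in Nakamura's description of $H^1_B$ are used consistently.

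Part (\ref{it:PGam-B-c}) is the heart of the matter: one must show that under the canonical isomorphism $i_M\colon H^1_\PGam(M)\to H^1_B(W(M))$ the two classes agree. The clean way is to use the extension-class interpretation established in Section \ref{sec:coh-B-pair}: the $S$-$(\varphi,\Gamma_K)$-module $\tilde M$ over $E[Z]/(Z^2)$ is the same thing as an extension of $M$ by $M\otimes(M^*\otimes M \text{ trivial part})$ — more precisely, $\tilde M$ viewed through its $Z$-adic filtration gives an extension in the category of $(\varphi,\Gamma_K)$-modules whose class is $[c_\PGam(\tilde M)]$, by the explicit formula recalled in the excerpt ($(\gamma-1)\tilde e, (\varphi-1)\tilde e$ for a lift $\tilde e$ of $e_0$). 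Applying the exact functor $\BD_B$ to this extension yields the corresponding extension of $B$-pairs, whose class in $H^1_B$ is, by the formula also recalled in the excerpt, computed from $(\sigma-1)\tilde w_e$, $(\sigma-1)\tilde w^+_\dR$, and $\tilde w_e - \tilde w^+_\dR$ for compatible lifts. The content is then that $\BD_B$ applied to $\tilde M$ produces exactly $(\tilde W_e, \tilde W^+_\dR)$ with the bases $f_i, g_i$ whose first-order transition data are $U_{e,\sigma}, U^+_{\dR,\sigma}, U_\dR$, and that the forgetful map on $H^1$ is compatible with $i_M$ — both of which follow from naturality of Berger's and Nakamura's constructions. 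The main obstacle is bookkeeping: ensuring that the lift $\tilde w_e$ (resp. $\tilde w^+_\dR$) chosen on the $B$-pair side corresponds, under $\BD_B$, to the lift $\tilde e$ chosen on the $(\varphi,\Gamma_K)$-side, so that the two cocycles are not merely cohomologous via $i_M$ but literally match representatives; equivalently, one checks that $\BD_B$ sends the canonical generator $e_0$ of $M_0$ to the canonical generator $(1,1)$ of $W_0$ and is compatible with liftings. Once this identification is pinned down, (\ref{it:PGam-B-c}) is immediate from the two explicit formulas.
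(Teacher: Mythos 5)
Your proposal is correct and follows essentially the same route as the paper: parts (\ref{it:PGam-B-a}) and (\ref{it:PGam-B-b}) are first-order expansions of the commutation/cocycle relations (the paper simply declares them ``easy to verify''), and part (\ref{it:PGam-B-c}) is proved via the extension-class interpretation of both cohomology groups together with the exactness/naturality of Berger's equivalence. The one step you leave imprecise is the passage from the deformation $\tilde M$ to an actual extension of $M_0$ by $M^*\otimes M$: the $Z$-adic filtration of $\tilde M$ only gives a self-extension $0\to M\to\tilde M\to M\to 0$, to which the lifting formula $((\gamma-1)\tilde e,(\varphi-1)\tilde e)$ does not directly apply since there is no distinguished $e_0$ to lift. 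The paper resolves this by forming $M^*_S\otimes_S\tilde M$ as a self-extension of $M^*\otimes_E M$ and pulling back along $M_0\to M^*\otimes_E M$, $1\mapsto\sum_i e^*_{i,z}\otimes e_{i,z}$; the resulting extension $\mathcal{M}$ has the canonical lifts $\sum_j f^*_{j,z}\otimes f_j$ and $\sum_j g^*_{j,z}\otimes g_j$ on the $B$-pair side and $\sum_j e^*_{j,z}\otimes e_j$ on the $(\varphi,\Gamma_K)$-side, and the explicit computation with these lifts produces exactly the representatives $c_B(\tilde M)$ and $c_\PGam(\tilde M)$, which is the ``bookkeeping'' you correctly identify as the main content.
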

\begin{proof} It is easy to verify (\ref{it:PGam-B-a}) and (\ref{it:PGam-B-b}).

Put $M^*_S=M^*\otimes_ES$. We consider $M^*_S\otimes_S \tilde{M}$ as
an extension of $M^*\otimes_E M$ by itself, and form the following
commutative diagram
$$ \xymatrix{ && & M_0\ar[d] \\ 0 \ar[r] & M^*\otimes_E M \ar[r] & M^*_S\otimes_S \tilde{M} \ar[r] & M^*\otimes_E M \ar[r] & 0,}
$$ where the vertical map $M_0\rightarrow M^*\otimes_E M$ is given
by $1\mapsto \sum_{i=1}^n e^*_{i,z}\otimes e_{i,z}$, which does not
depend on the choice of the basis $\{e_1,\cdots, e_n\}$. Pulling
back $M^*_{S}\otimes_S \tilde{M}$ via $M_0\rightarrow M^*\otimes_E
M$ we obtain an extension of $M_0$ by $M^*\otimes_E M$. Let
$\mathcal{M}$ denote the resulting extension. Then $\mathcal{M}$ is
a sub-$E$-$B$-pair of $M^*_S\otimes_S \tilde{M}$. Put
$\BD_B(\mathcal{M})=(\mathcal{W}_e, \mathcal{W}^+_\dR)$.

A lifting of $1$ in $\mathcal{W}_e$ is $\sum_j f^*_{j,z}\otimes
f_j$, and a lifting of $1$ in $\mathcal{W}^+_\dR$ is $\sum_j
g^*_{j,z}\otimes g_j$. We have \begin{eqnarray*} (\sigma-1)\sum_j
f^*_{j,z}\otimes f_j &=& \sigma (f^*_{1,z},\cdots, f^*_{n,z})
\otimes \sigma \left(
\begin{array}{c} f_1 \\ f_2 \\ \vdots \\ f_n \end{array}\right) - (f^*_{1,z},\cdots, f^*_{n,z})
\otimes \left(
\begin{array}{c} f_1 \\ f_2 \\ \vdots \\ f_n \end{array}\right)
\\
&=& (f^*_{1,z},\cdots, f^*_{n,z}) (A_{e,\sigma}^t)^{-1} \otimes
A_{e,\sigma}^t(1+zU_{e,\sigma}^t) \\
&=& (f^*_{1,z},\cdots, f^*_{n,z}) \otimes U_{e,\sigma}^t z \left(
\begin{array}{c} f_1 \\ f_2 \\ \vdots \\ f_n \end{array}\right).
\end{eqnarray*} Similarly,
\begin{eqnarray*} (\sigma-1)\sum_j
g^*_{j,z}\otimes g_j = (g^*_{1,z},\cdots, g^*_{n,z}) \otimes
(U^+_{\dR,\sigma})^t z \left(
\begin{array}{c} g_1 \\ g_2 \\ \vdots \\ g_n \end{array}\right),
\end{eqnarray*}
and
$$ \sum_j
f^*_{j,z}\otimes f_j - \sum_j g^*_{j,z}\otimes g_j =
(g^*_{1,z},\cdots, g^*_{n,z}) \otimes U_{\dR}^t z \left(
\begin{array}{c} g_1 \\ g_2 \\ \vdots \\ g_n \end{array}\right). $$
Hence the element in $H^1_B(\BD_B(M^*\otimes_EM))$ attached to the
extension $\BD_B(\mathcal{M})$ is $[c_B(\tilde{M})]$.

A similar computation shows that the element in
$H^1_\PGam(M^*\otimes_E M)$ attached to the extension $\mathcal{M}$
is $[c_\PGam(\tilde{M})]$. Now (\ref{it:PGam-B-c}) follows.
\end{proof}

\section{The reciprocity law and an application} \label{sec:aux-for}

\subsection{Reciprocity law}

In \cite[Section 2]{Zhang} using local class field theory Zhang
precisely described the perfect pairing
$$ H^1(G_K, E) \times H^1(G_K,E(1)) \rightarrow H^2(G_K, E(1)). $$
We recall it below.

The Kummer theory gives us a canonical isomorphism so called the
Kummer map
\begin{eqnarray*}
\lim_{\overleftarrow{\;\;
n\;\:}}(K^\times/(K^\times)^{p^n})\otimes_{\BZ_p} E & \rightarrow &
H^1(G_K, E(1)) \\ \sum_i \alpha_i\otimes a_i & \mapsto & \sum_i a_i
[(\alpha_i)].
\end{eqnarray*} Here $(\alpha)$ is the $1$-cocycle such that
$$ \frac{g (\sqrt[p^n]{\alpha})}{\alpha} = \varepsilon_n^{(\alpha_g)}
$$ for $\alpha\in K^\times$ and $g\in G_K$, where
$(\sqrt[p^{n+1}]{\alpha})^p=\sqrt[p^n]{\alpha}$. Combining the
Kummer map and the exponent map $$\exp: p\fo_K\rightarrow K^\times$$
and extending it by linearity we obtain an embedding from
$K\otimes_{\BQ_p}E$ to $H^1(G_K, E(1))$, again denoted by $\exp$.
Then we have $$ H^1(G_K, E(1))=\exp(K\otimes_{\BQ_p}E)\oplus E \cdot
[(p)].$$

Let $\Hom(G_K,E)$ be the group of additive characters of $G_K$ with
values in $E$. As the action of $G_K$ on $E$ is trivial,
$H^1(G_K,E)$ is naturally isomorphic to $\Hom(G_K, E)$. Let
$\psi_0:G_K\rightarrow E$ be the additive character that vanishes on
the inertial subgroup of $G_K$ and maps the geometrical Frobenius to
$[K_0:\BQ_p]$. For any $\tau\in \mathrm{Emb}(K,E)$ let $\psi_\tau$
be the composition $\tau\circ \log \circ \mathrm{rec}_K^{-1}$
\footnote{ Since the character $\psi_\tau$ of the Weil group $W_K$
sends any lifting of the $q$th power Frobenius to $0$, it can be
extended to a character of $G_K$ which is again denoted by
$\psi_\tau$}, where $\log$ is normalized such that $\log (p)=0$.
Then $\{\psi_0, \psi_\tau : \tau\in \mathrm{Emb}(K,E) \}$ is an
$E$-basis of $H^1(G_K,E)$.

\begin{lem} \label{lem:zhang}
\cite[Proposition 2.1]{Zhang} The cup product of $a_0 \psi_0 +
\sum_{\tau\in \mathrm{Emb}(K,E)} a_\tau \psi_\tau $ $(a_0, a_\tau\in
E)$ and $b_0[(p)]+  \exp(b)$ $(b_0\in E, b\in K\otimes_{\BQ_p} E)$
is
$$\Big( a_0 b_0 - \mathrm{tr}_{K/\BQ_p} ((a_\tau)_\tau \cdot b) \Big) (\psi_0\cup [(p)]). $$
Here,  $(a_\tau)_{\tau}$ is considered as an element in
$K\otimes_{\BQ_p}E$ via the isomorphism $(\ref{eq:vector-element})$.
\end{lem}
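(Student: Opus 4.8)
The plan is to reduce to the cup products of the basis vectors and then invoke the explicit description of local Tate duality via local class field theory (as recalled in \cite{Zhang}). By $E$-bilinearity of the cup product it suffices to compute $\psi_0\cup[(p)]$, $\psi_0\cup\exp(\beta)$, $\psi_\tau\cup[(p)]$ and $\psi_\tau\cup\exp(\beta)$ for $\tau\in\mathrm{Emb}(K,E)$ and $\beta\in p\fo_K$, and then to assemble the general pairing as the corresponding linear combination, writing $b=\sum_j\beta_j\otimes c_j$ so that $\exp(b)=\sum_j c_j[(\exp\beta_j)]$.

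The key input I would record is the fundamental formula of local class field theory: under the identifications $H^1(G_K,E)=\Hom_{\mathrm{cont}}(G_K^{\mathrm{ab}},E)$, the Kummer isomorphism $H^1(G_K,E(1))\cong(\varprojlim_n K^\times/(K^\times)^{p^n})\otimes_{\BZ_p}E$ and the invariant isomorphism $\mathrm{inv}_K\colon H^2(G_K,E(1))\xrightarrow{\sim}E$, the cup-product pairing is computed by $\mathrm{inv}_K(\chi\cup[(\alpha)])=\chi(\mathrm{rec}_K(\alpha))$ (up to the standard sign) for every additive character $\chi$ of $G_K$ and every $\alpha\in K^\times$. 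Granting this, the four evaluations are immediate. Since $\psi_0$ vanishes on the inertia subgroup and $\mathrm{rec}_K(\exp\beta)$ lies in inertia, $\psi_0\cup\exp(\beta)=0$. Since $\psi_\tau=\tau\circ\log\circ\mathrm{rec}_K^{-1}$ and $\log(p)=0$, we get $\psi_\tau(\mathrm{rec}_K(p))=\tau(\log p)=0$, hence $\psi_\tau\cup[(p)]=0$. And $\psi_\tau(\mathrm{rec}_K(\exp\beta))=\tau(\log(\exp\beta))=\tau(\beta)$, so $\mathrm{inv}_K(\psi_\tau\cup\exp(\beta))$ equals $\tau(\beta)$ times that of $\psi_0\cup[(p)]$, up to the normalising constant. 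Summing over $\tau$ and over $j$ and using the ring isomorphism (\ref{eq:vector-element}) to recognise $\sum_\tau a_\tau\sum_j c_j\tau(\beta_j)$ as $\mathrm{tr}_{K/\BQ_p}((a_\tau)_\tau\cdot b)$, the only surviving contributions are $a_0\psi_0\cup b_0[(p)]$ and $\sum_\tau a_\tau\psi_\tau\cup\exp(b)$, which give $a_0 b_0\,(\psi_0\cup[(p)])$ and $-\mathrm{tr}_{K/\BQ_p}((a_\tau)_\tau\cdot b)\,(\psi_0\cup[(p)])$.

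I expect the main obstacle to be precisely the bookkeeping of signs and normalisations: one must fix the sign in the cup-product/reciprocity compatibility, the normalisations of the Kummer map and of $\exp$, and the value of $\mathrm{inv}_K(\psi_0\cup[(p)])$. The normalisation of $\psi_0$ (sending the geometric Frobenius to $[K_0:\BQ_p]$) is chosen exactly so that, once the pairing is expressed as a multiple of the reference class $\psi_0\cup[(p)]$, the two surviving coefficients collapse to the clean expressions $a_0 b_0$ and $-\mathrm{tr}_{K/\BQ_p}((a_\tau)_\tau\cdot b)$ appearing in the statement; verifying this is a routine but delicate diagram chase with the invariant map, and is the one point that genuinely requires care.
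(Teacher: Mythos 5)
The paper contains no proof of this lemma; it is imported verbatim from \cite[Proposition 2.1]{Zhang}, so there is no argument of the author's to compare yours against. Your strategy --- reduce by $E$-bilinearity to the four basic cup products $\psi_0\cup[(p)]$, $\psi_0\cup\exp(\beta)$, $\psi_\tau\cup[(p)]$, $\psi_\tau\cup\exp(\beta)$ and evaluate them via the class-field-theoretic description of the Tate pairing --- is the standard (essentially the only) approach, and your vanishing statements $\psi_0\cup\exp(\beta)=0$ and $\psi_\tau\cup[(p)]=0$ are correct for exactly the reasons you give.

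The difficulty is that the step you set aside as ``routine bookkeeping of signs and normalisations'' does not close as described, and it is where the entire content of the lemma sits. All of the ambiguities you list (sign of the Kummer coboundary, order of the cup product, arithmetic versus geometric Frobenius, normalisation of the invariant map) enter the compatibility $\mathrm{inv}_K(\chi\cup[(\alpha)])=c\,\chi(\mathrm{rec}_K(\alpha))$ only through a single global constant $c$; they cannot change the \emph{ratio} of any two values of the pairing. With this paper's normalisations one has $p=u\,\pi_K^{e}$ with $u\in\fo_K^\times$, so $\psi_0(\mathrm{rec}_K(p))=e\,[K_0:\BQ_p]=[K:\BQ_p]$, giving $\mathrm{inv}_K(\psi_0\cup[(p)])=c\,[K:\BQ_p]$, while $\mathrm{inv}_K(\psi_\tau\cup\exp(\beta))=c\,\tau(\beta)$. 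Assembling as you propose therefore yields $\bigl(a_0b_0+\tfrac{1}{[K:\BQ_p]}\,\mathrm{tr}_{K/\BQ_p}((a_\tau)_\tau\cdot b)\bigr)(\psi_0\cup[(p)])$, whose trace term differs from the asserted $-\mathrm{tr}_{K/\BQ_p}((a_\tau)_\tau\cdot b)$ by the factor $-[K:\BQ_p]$ --- in particular by a \emph{relative} sign between the two surviving terms, which no choice of the global constant $c$ can produce. So either the compatibility you are invoking is not the one under which \cite[Proposition 2.1]{Zhang} is actually proved, or one of the objects involved ($\psi_0$, $\psi_\tau$, $\exp$, the Kummer map, or the reference class $\psi_0\cup[(p)]$) is normalised there differently from the present paper. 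To complete the proof you must open \cite{Zhang}, pin down the exact form of the duality used there, and check its compatibility with the conventions here; as written, your argument establishes a formula with different coefficients, so the deferred verification is not the routine diagram chase you anticipate.
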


\begin{lem}\label{lem:de-Rham-cocycle}
For $\lambda_0, \lambda_\tau\in E$ $(\tau\in \mathrm{Emb}(K,E) )$,
the extension of $E$ $($as a trivial $G_K$-module$)$ by $E$
corresponding to the cocycle $\lambda_0\psi_0+\sum_{\tau\in
\mathrm{Emb}(K,E)}\lambda_\tau\psi_\tau$ is de Rham if and only if
$\lambda_\tau=0$ for each $\tau$.
\end{lem}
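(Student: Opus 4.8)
The plan is to analyze the extension class directly through the Bloch--Kato exponential/the Galois cohomology of $\BB_\dR$, reducing the de Rham condition to the vanishing of the image of the class in $H^1(G_K, \BB_\dR/\BB^+_\dR)$ (equivalently, in $H^1(G_K,\BC_p)$ since the relevant extension sits in weight $0$). First I would recall that an extension $0\to E\to V\to E\to 0$ of trivial $G_K$-modules is de Rham if and only if it is Hodge--Tate, which (since both the sub and the quotient have Hodge--Tate weight $0$) is equivalent to $V$ becoming trivial after $\otimes_{\BQ_p}\BC_p$, i.e. to the class of $V$ in $H^1(G_K,\BC_p)$ being zero. Thus the lemma amounts to identifying the kernel of the natural map $H^1(G_K,E)\to H^1(G_K,\BC_p\otimes_{\BQ_p}E)$ and showing that it is exactly the line spanned by $\psi_0$.

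The key computational input is the classical description of $H^1(G_K,\BC_p)$: by Tate's theorems $H^1(G_K,\BC_p)$ is one-dimensional over $K$, and the natural map $H^1(G_K,\BQ_p)\to H^1(G_K,\BC_p)$ has kernel spanned by the unramified class (the one cutting out the cyclotomically-normalized unramified $\BZ_p$-extension has nontrivial image, while the unramified character giving $\psi_0$ dies). More precisely, I would invoke the fact, which follows from Sen's theory, that a continuous additive character $\psi\colon G_K\to E$ has de Rham (equivalently Hodge--Tate, equivalently $\BC_p$-trivial) associated extension if and only if $\psi$ is a scalar multiple of $\psi_0$, i.e. $\psi$ factors through an unramified character; the characters $\psi_\tau=\tau\circ\log\circ\mathrm{rec}_K^{-1}$ are, by construction, ramified (they are nontrivial on $\fo_K^\times$ via $\tau\circ\log$) and their associated extensions have Sen operator with nonzero nilpotent part in the $\tau$-component, hence are not Hodge--Tate. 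Concretely, the cocycle $\sum_\tau \lambda_\tau\psi_\tau$ corresponds under local class field theory and Kummer theory to the extension built from $\exp$ of an element of $K\otimes_{\BQ_p}E$, whose Sen operator one computes to be non-semisimple precisely when some $\lambda_\tau\neq 0$.

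So the steps, in order, are: (1) reduce ``de Rham'' to ``Hodge--Tate'' to ``$\BC_p$-admissible'' for this weight-$0$ self-extension of the trivial representation; (2) recall $\dim_K H^1(G_K,\BC_p)=1$ and identify the image of $\psi_0$ as zero and the image of each $\psi_\tau$ as nonzero, using that $\psi_0$ is unramified while the $\psi_\tau$ are ramified; (3) conclude, by linear independence of the $\psi_\tau$ together with the unramified $\psi_0$ inside $H^1(G_K,E)$, that $\sum_\tau\lambda_\tau\psi_\tau$ has trivial image in $H^1(G_K,\BC_p\otimes_{\BQ_p}E)$ iff all $\lambda_\tau=0$, hence the extension is de Rham iff all $\lambda_\tau=0$ (the $\lambda_0\psi_0$ term never obstructs). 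I expect the main obstacle to be step (2): pinning down precisely \emph{which} classes in $H^1(G_K,\BC_p\otimes E)$ are hit requires either an explicit Sen-operator computation for the extension attached to $\exp(b)$ with $b\in K\otimes_{\BQ_p}E$, or a clean citation; the cleanest route is probably to observe that $\BD_\dR$ of the extension attached to $\exp(b)$ is the $B$-pair/filtered module picture already set up in this section, where $b\neq 0$ forces a nonzero nilpotent contribution to the connection, so the extension fails to be de Rham exactly when $b\neq 0$, i.e. when some $\lambda_\tau\neq 0$.
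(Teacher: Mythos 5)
Your proof is correct in substance but follows a genuinely different route from the paper's. The paper disposes of the lemma in one line by citing Nakamura's computation that the space of de Rham extensions of $E$ by $E$ is one-dimensional over $E$, and then observing that the unramified line $E\psi_0$ already accounts for it. You instead argue from scratch through Hodge--Tate theory: reduce ``de Rham'' to ``$\BC_p$-admissible'' for this nilpotent weight-$0$ extension and identify the kernel of $\Hom(G_K,E)\to H^1(G_K,\BC_p\otimes_{\BQ_p}E)$ with $E\psi_0$ via Tate--Sen. What your approach buys is self-containedness and an explicit picture of why the $\psi_\tau$ obstruct (they are seen on inertia); what the paper's approach buys is brevity and the avoidance of the Hodge--Tate-versus-de Rham comparison altogether, since Nakamura's statement is directly about de Rham classes.

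Three points need tightening. First, ``de Rham iff Hodge--Tate'' should not be asserted up front: the implication Hodge--Tate $\Rightarrow$ de Rham is exactly what is at stake. The clean way to close the circle is the chain: de Rham $\Rightarrow$ $\BC_p$-admissible $\Rightarrow$ (by Sen) inertia acts through a finite, hence trivial, quotient $\Rightarrow$ $\psi\in E\psi_0$ $\Rightarrow$ unramified, hence crystalline, hence de Rham. Second, linear independence of $\psi_0,\psi_\tau$ in $H^1(G_K,E)$ is not what you need; you need that no \emph{nonzero combination} $\sum_\tau\lambda_\tau\psi_\tau$ dies in $H^1(G_K,\BC_p\otimes E)$. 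This follows from the same Sen criterion once you check that any such combination is ramified, i.e.\ nontrivial on $\fo_K^\times$ via $\mathrm{rec}_K$ --- which holds because $\log(\fo_K^\times)$ spans $K$ over $\BQ_p$ and the $\tau$ are $E$-linearly independent maps $K\to E$. Third, the closing aside identifying $\sum_\tau\lambda_\tau\psi_\tau$ with a Kummer class $\exp(b)$ conflates $H^1(G_K,E)$ with $H^1(G_K,E(1))$ (the latter is where the paper's Kummer-theoretic description lives, cf.\ Lemma \ref{lem:zhang}); it should be dropped, though it does not affect the main argument.
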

\begin{proof} By \cite[Lemma 4.3]{Naka}, the subspace of extensions
of $E$ by $E$ that are de Rham is $1$-dimensional, and so consists
of those corresponding to the cocycles $\lambda_0\psi_0$
($\lambda_0\in E$).
\end{proof}

\subsection{An auxiliary formula}

Let $\vec{\CL}=(\CL_\sigma)_{\sigma: K\hookrightarrow E}$ be a
vector. We consider $\vec{\CL}$ as an element of $K\otimes_{\BQ_p}E$
via the isomorphism (\ref{eq:vector-element}).

Let $D$ be a filtered $E$-$(\varphi,N)$-module: the underlying
$E$-$(\varphi,N)$-module $D$ is a $(K_0\otimes_{\BQ_p}E)$-module
with a basis $\{f_1, f_2, f_3\}$ such that
$$\varphi^{[K_0:\BQ_p]} f_1 = p^{-[K_0:\BQ_p]} f_1, \
\varphi^{[K_0:\BQ_p]} f_2 =  f_2, \ \varphi^{[K_0:\BQ_p]} f_3 = f_3,
$$ and $$ N(f_1)=0, \ N(f_2)= - f_1, \ N(f_3)=f_1 ;  $$ the
filtration on $$ K\otimes_{K_0}D=(K\otimes_{\BQ_p}E) f_1 \oplus
(K\otimes_{\BQ_p}E) f_2 \oplus (K\otimes_{\BQ_p}E) f_3 $$ satisfies
$$ \Fil^iD = \left\{\begin{array}{cl} (K\otimes_{\BQ_p}E)(f_2- \vec{\CL}f_1) \oplus (K\otimes_{\BQ_p}E)(f_3 +
\vec{\CL}f_1) & \text{ if } i=0, \\ 0 & \text{ if } i>0 .\end{array}
\right.
$$

Let $\pi_i$ be the projection map
$$ \BX_\log(D)
\rightarrow \BB_{\log,E}, \ \ \ \sum_{j=1}^3 a_j f_j \mapsto a_i. $$

\begin{lem}\label{lem:aux} Let $c: G_K\rightarrow \BX_\log(D)$ be a
$1$-cocycle whose class in $H^1(G_K,\BX_\log(D))$ belongs to
$\ker(H^1(G_K,\BX_\log(D))\rightarrow H^1(G_K,\BX_\dR(D)))$. Then
there exist $$\gamma_{2,0}, \gamma_{2,\tau}, \gamma_{3,0},
\gamma_{3,\tau}\in E$$ $(\tau\in \mathrm{Emb}(K,E))$ such that
$$ \pi_2(c)=\gamma_{2,0}\psi_0+\sum_{\tau\in
\mathrm{Emb}(K,E)}\gamma_{2,\tau}\psi_\tau $$ and
$$ \pi_3(c)=\gamma_{3,0}\psi_0+\sum_{\tau\in
\mathrm{Emb}(K,E)}\gamma_{3,\tau}\psi_\tau. $$ Furthermore,
$$ \gamma_{2,0}-\gamma_{3,0}=\sum_{\tau\in
\mathrm{Emb}(K,E)}\CL_\tau(\gamma_{2,\tau}-\gamma_{3,\tau}). $$
\end{lem}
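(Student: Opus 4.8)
The plan is to pass to the $E$-$B$-pair $W=\BD_B(D)$, so that $W_e=\BX_\log(D)$ and $W_\dR/W^+_\dR=\BX_\dR(D)$, and to play off the three sub-$(\varphi,N)$-modules $D_1=(K_0\otimes_{\BQ_p}E)f_1$, $D^{(2)}=(K_0\otimes_{\BQ_p}E)f_1\oplus(K_0\otimes_{\BQ_p}E)f_3$ and $D^{(3)}=(K_0\otimes_{\BQ_p}E)f_1\oplus(K_0\otimes_{\BQ_p}E)f_2$ of $D$ (these are $\varphi_D$- and $N_D$-stable by the explicit form of $D$). First I would note that the quotients $D/D^{(2)}$, $D/D^{(3)}$, $D/D_1$ are $\Fil$-trivial — modulo $f_1$ the $\vec{\CL}$-twist in $\Fil^0 D$ disappears, so $D/D^{(2)}$ and $D/D^{(3)}$ are the trivial rank-$1$ filtered $E$-$(\varphi,N)$-module over $K$ and $D/D_1$ is the trivial rank-$2$ one; hence $\BD_B(D/D^{(2)})=\BD_B(D/D^{(3)})=W_0$ and $\BD_B(D/D_1)=W_0^{\oplus 2}$, and a direct computation inside $\BB_\log\otimes_{K_0}D$ shows that the surjections $W_e=\BX_\log(D)\twoheadrightarrow W_{0,e}=\BB_{e,E}$ coming from $D\twoheadrightarrow D/D^{(2)}$ and $D\twoheadrightarrow D/D^{(3)}$ are exactly $\pi_2$ and $\pi_3$.

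For the first assertion: as $\BD_B$ and $W\mapsto W_e$ are exact (Proposition \ref{prop:berger}), $\pi_2(c)$ and $\pi_3(c)$ are $1$-cocycles valued in $\BB_{e,E}$. The fundamental exact sequence $0\to E\to\BB_{e,E}\to(\BB_\dR/\BB^+_\dR)\otimes_{\BQ_p}E\to 0$ together with Tate's vanishing $H^0(G_K,(\BB_\dR/\BB^+_\dR)_E)=H^1(G_K,(\BB_\dR/\BB^+_\dR)_E)=0$ gives an isomorphism $H^1(G_K,\BB_{e,E})\xrightarrow{\sim}H^1(G_K,E)$, and $\{\psi_0\}\cup\{\psi_\tau:\tau\in\mathrm{Emb}(K,E)\}$ is an $E$-basis of the target; so $\pi_2(c)$ and $\pi_3(c)$ are cohomologous to unique combinations $\gamma_{2,0}\psi_0+\sum_\tau\gamma_{2,\tau}\psi_\tau$ and $\gamma_{3,0}\psi_0+\sum_\tau\gamma_{3,\tau}\psi_\tau$.

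For the relation I would argue as follows. By the long exact sequence \eqref{eq:long-exact-sq}, the hypothesis that $[c]$ dies in $H^1(G_K,\BX_\dR(D))$ is equivalent to $[c]$ lifting to a class $[\hat W]\in H^1_B(W)$, i.e.\ to an extension $0\to W\to\hat W\to W_0\to 0$ of $E$-$B$-pairs. Put $W_1=\BD_B(D_1)$; since $D_1$ is crystalline of Hodge–Tate weight $-1$ in each embedding with $\varphi^{[K_0:\BQ_p]}$-eigenvalue $p^{-[K_0:\BQ_p]}$, we have $D_1=\BD_\st(E(\chi_\cyc))$ (the unramified twist being trivial because the eigenvalue is the pure value), hence $W_1=W(E(\chi_\cyc))$ and $H^i_B(W_1)\cong H^i(G_K,E(1))$. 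The exact sequence $0\to W_1\to W\to W/W_1=W_0^{\oplus 2}\to 0$ produces a connecting map $\partial\colon H^1_B(W_0)^{\oplus 2}\to H^2_B(W_1)$ equal to cup product with the extension class $(\xi_2,\xi_3)\in\Ext^1_B(W_0^{\oplus 2},W_1)=H^1_B(W_1)^{\oplus 2}$ of $W$, where $\xi_2$ (resp.\ $\xi_3$) is the class of $0\to W_1\to\BD_B(D^{(3)})\to W_0\to 0$ (resp.\ with $D^{(2)}$). The filtered-$(\varphi,N)$-module isomorphism $D^{(2)}\xrightarrow{\sim}D^{(3)}$ given by $f_3\mapsto f_2$, $f_1\mapsto -f_1$ is $-\mathrm{id}$ on $D_1$ and the identity on the quotient, so $\xi_3=-\xi_2$; and by the very definition of the Fontaine–Mazur $\CL$-invariant (Definition \ref{defn:Fontaine-Mazur}) — $D^{(3)}=\langle f_1,f_2\rangle$ being semistable non-crystalline with $N f_2=-f_1$ and $\Fil^0=\langle f_2-\vec{\CL}f_1\rangle$ — one has $\xi_2=[(p)]+\exp(\vec{\CL})$ in $H^1(G_K,E(1))$, up to an inessential overall sign. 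Pushing $[\hat W]$ into $H^1_B(W_0)^{\oplus 2}=H^1(G_K,E)^{\oplus 2}$ yields $(\bar\gamma_2,\bar\gamma_3)$ with $\bar\gamma_i=\gamma_{i,0}\psi_0+\sum_\tau\gamma_{i,\tau}\psi_\tau$, and this pair lies in $\ker\partial$; therefore $(\bar\gamma_2-\bar\gamma_3)\cup\xi_2=\bar\gamma_2\cup\xi_2+\bar\gamma_3\cup\xi_3=0$ in $H^2_B(W_1)=H^2(G_K,E(1))\cong E$. Since the $B$-pair cup product $H^1_B(W_0)\times H^1_B(W_1)\to H^2_B(W_1)$ is the Galois cup product $H^1(G_K,E)\times H^1(G_K,E(1))\to H^2(G_K,E(1))$, Lemma \ref{lem:zhang} evaluates this: as $\psi_0\cup[(p)]\ne 0$ we get $(\gamma_{2,0}-\gamma_{3,0})=\mathrm{tr}_{K/\BQ_p}\!\big((\gamma_{2,\tau}-\gamma_{3,\tau})_\tau\cdot\vec{\CL}\big)=\sum_\tau\CL_\tau(\gamma_{2,\tau}-\gamma_{3,\tau})$, which is the asserted identity.

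The step I expect to be the main obstacle is the identification $\xi_2=[(p)]+\exp(\vec{\CL})$: although this is essentially the content of the Fontaine–Mazur $\CL$-invariant, carrying it out requires fixing the normalisation of the $[(p)]$-component (forced by $N f_2=-f_1$), the overall sign, and the triviality of the unramified twist of $\delta_1$, and also checking that Nakamura's cup products on $B$-pair cohomology match the Galois cup products so that Lemma \ref{lem:zhang} applies verbatim; a subsidiary point is to verify that, for the $\varphi_D$-structure permitted by the statement, $D_1,D^{(2)},D^{(3)}$ are genuinely sub-$(\varphi,N)$-modules and the displayed quotients are the trivial filtered modules.
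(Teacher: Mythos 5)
Your overall strategy is sound and, despite the $B$-pair packaging, it is essentially the same argument as the paper's: both proofs reduce to the rank-two subquotient spanned by $f_1$ and $f_2$ (the paper realizes it as $V/V'$ with $V'$ corresponding to $\langle f_2+f_3\rangle$, which is exactly your ``$\xi_3=-\xi_2$ plus bilinearity of the cup product'' packaged as a single quotient), identify its extension class with $[(p)]+\exp(\vec{\CL})$ --- the paper quotes \cite[Lemma 5.5]{Zhang} for precisely the step you single out as the main obstacle --- and conclude with Lemma \ref{lem:zhang}. Your treatment of the first assertion (classes rather than cocycles, via $H^1(G_K,E)\xrightarrow{\sim}H^1(G_K,\BB_{e,E})$) is if anything more careful than the paper's, and it suffices for the way the lemma is used.

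There is, however, one genuine gap. The lemma only prescribes $\Fil^iD$ for $i\geq 0$; the filtration in negative degrees is left arbitrary (and in the application to $\mathscr{D}_0$ in Section \ref{sec:main} it is genuinely not the standard one, being inherited from a tensor product of filtered modules with various Hodge--Tate weights). Consequently your assertions that $D_1$ is crystalline of Hodge--Tate weight $-1$ in each embedding, that $D_1=\BD_\st(E(\chi_\cyc))$ and $W_1=W(E(\chi_\cyc))$, and that $\xi_2,\xi_3$ live in $H^1(G_K,E(1))$, do not follow from the hypotheses: if the jump of $\Fil^\bullet$ on $\langle f_1\rangle_K$ occurs at some $i<-1$, then $W_1$ is a non-\'etale rank-one $B$-pair, $H^2_B(W_1)$ is not $H^2(G_K,E(1))$, and the reduction to the classical reciprocity law breaks down. (Note that the hypothesis on $[c]$ involves $\BX_\dR(D)$ and hence the full filtration, so you cannot simply declare the negative part irrelevant.) The missing step is exactly Lemma \ref{lem:same-kernel}: the kernel of $H^1(G_K,\BX_\log(D))\rightarrow H^1(G_K,\BX_\dR(D))$ depends only on $\Fil^0$, so one may first replace $\Fil$ by the filtration with $\Fil^{-1}D=D$. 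After this normalization all of your identifications are correct and the remainder of your argument goes through, with the identification $\xi_2=[(p)]+\exp(\vec{\CL})$ supplied by the same external input the paper uses.
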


In our proof of Lemma \ref{lem:aux} we need the following

\begin{lem} \label{lem:same-kernel}
Let $D$ be an $E$-$(\varphi,N)$-module. If $\Fil_1$ and $\Fil_2$ are
two filtrations on $K\otimes_{K_0}D$ such that
$\Fil_1^0(K\otimes_{K_0}D)=\Fil^0_2(K\otimes_{K_0}D)$, then the
kernel of $$H^1(G_K, \BX_\log(D))\rightarrow H^1(G_K, \BX_\dR(D,
\Fil_1))$$ coincides with the kernel of $$H^1(G_K,
\BX_\log(D))\rightarrow H^1(G_K, \BX_\dR(D, \Fil_2)).$$
\end{lem}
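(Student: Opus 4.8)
The plan is to reduce the statement to the fact that the two de Rham realizations $\BX_\dR(D,\Fil_1)$ and $\BX_\dR(D,\Fil_2)$ are canonically isomorphic as $G_K$-modules, in a way compatible with the map from $\BX_\log(D)=(\BB_{\log,E}\otimes_{K_0}D)^{\varphi=1,N=0}$. Recall that $\BX_\dR(D,\Fil_j)=\BB_{\dR}\otimes_{K_0}D\,/\,\Fil_j^0(\BB_\dR\otimes_{K_0}D)$, where $\Fil_j^0(\BB_\dR\otimes_{K_0}D)=\sum_{i\in\BZ}\Fil^i_j \BB_\dR\otimes \Fil^{-i}_j D$. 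The hypothesis is only that $\Fil_1^0(K\otimes_{K_0}D)=\Fil_2^0(K\otimes_{K_0}D)$, i.e. the two filtrations on $K\otimes_{K_0}D$ have the same step in degree $0$ (but possibly differ in other degrees). So the first step is to observe that, since $\BB_\dR^+/t\BB_\dR^+\cong\BC_p$ carries a trivial induced filtration-jump structure in the relevant range, the subspace $\Fil_j^0(\BB_\dR\otimes_{K_0}D)\subset \BB_\dR\otimes_{K_0}D$ depends only on the $K$-subspace $\Fil_j^0(K\otimes_{K_0}D)$ together with the full filtration $\Fil_j^{\ge 1}$; I will need to check that, because the other graded pieces of $\Fil_j$ contribute inside $t\BB_\dR^+\otimes D$, they do not change the image of $\BX_\log(D)$. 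Concretely: the composite $\BX_\log(D)\to \BB_\dR\otimes_{K_0}D \to \BX_\dR(D,\Fil_j)$ has the property that the class of a cocycle $c$ dies in $H^1(G_K,\BX_\dR(D,\Fil_j))$ iff $c$ lifts to a cocycle with values in the $G_K$-stable $\BB_\dR^+$-lattice $\Fil_j^0(\BB_\dR\otimes_{K_0}D)$ inside $\BB_\dR\otimes_{K_0}D$, up to coboundary.

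The second step is to spell out what ``$c$ becomes trivial in $H^1(G_K,\BX_\dR(D,\Fil_j))$'' means via the long exact sequence attached to $0\to\Fil_j^0(\BB_\dR\otimes_{K_0}D)\to\BB_\dR\otimes_{K_0}D\to\BX_\dR(D,\Fil_j)\to 0$, or better, via the $B$-pair $\BD_B(D,\Fil_j)=(\BX_\log(D),\Fil_j^0(\BB_\dR\otimes_{K_0}D))$ and the cohomology $H^1_B$ of \S\ref{sec:coh-B-pair}: the kernel in question is exactly the image of $H^1_B(\BD_B(D,\Fil_j))\to H^1(G_K,\BX_\log(D))=H^1(G_K, W_e)$ from the long exact sequence (\ref{eq:long-exact-sq}). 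Thus it suffices to show that the two lattices $\Fil_1^0(\BB_\dR\otimes_{K_0}D)$ and $\Fil_2^0(\BB_\dR\otimes_{K_0}D)$, while possibly distinct, give rise to the same image inside $H^1(G_K,\BX_\log(D))$. The key point is that the two lattices agree modulo $t\BB_\dR^+\otimes_{K_0}D$: indeed $\Fil_j^0(\BB_\dR\otimes_{K_0}D) = (\BB_\dR^+\otimes_K \Fil_j^0(K\otimes_{K_0}D)) + (t\BB_\dR^+\otimes_{K_0}D)$ does not hold in general, but $\Fil_j^0(\BB_\dR\otimes_{K_0}D) \supset t\BB_\dR^+\otimes_{K_0}D$ always, and $\Fil_j^0/\,(t\BB_\dR^+\otimes D) = \Fil_j^0(\BC_p\otimes_K (K\otimes_{K_0}D)) = \BC_p\otimes_K \Fil_j^0(K\otimes_{K_0}D)$ by hypothesis independent of $j$. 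Hence $\Fil_1^0$ and $\Fil_2^0$ are two $G_K$-stable $\BB_\dR^+$-lattices with the same reduction mod $t\BB_\dR^+\otimes D$.

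The third step is then a standard ``d\'evissage along powers of $t$'' argument: two $G_K$-stable $\BB_\dR^+$-lattices $L_1,L_2$ in $\BB_\dR\otimes_{K_0}D$ with $L_1\equiv L_2 \pmod{t(\BB_\dR^+\otimes D)}$ and both containing $t(\BB_\dR^+\otimes D)$ are actually equal — because a lattice containing $t(\BB_\dR^+\otimes D)$ is determined by its reduction in $(\BB_\dR^+\otimes D)/t(\BB_\dR^+\otimes D)=\BC_p\otimes_K(K\otimes_{K_0}D)$. So in fact $\Fil_1^0(\BB_\dR\otimes_{K_0}D)=\Fil_2^0(\BB_\dR\otimes_{K_0}D)$ as subspaces, the two short exact sequences coincide, and the two kernels are literally the same subspace of $H^1(G_K,\BX_\log(D))$. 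I expect the main (mild) obstacle to be bookkeeping with the normalization of $\Fil^0$ on a tensor product of filtered modules and confirming the inclusion $\Fil_j^0(\BB_\dR\otimes_{K_0}D)\supset t\,(\BB_\dR^+\otimes_{K_0}D)$ — this uses that the filtration on $\BB_\dR$ is $\Fil^i\BB_\dR=\bigoplus_{k\ge i}\BB_\dR^+ t^k$ and that $\Fil^\bullet D$ is exhaustive, so every element of $t\BB_\dR^+\otimes D$ lies in $\sum_{i\ge 1}\Fil^i\BB_\dR\otimes\Fil^{-i}D$ once we absorb $t$ into the $\BB_\dR$-factor; the genuinely different behaviour in degrees $\neq 0$ is invisible after passing to the quotient because it is swallowed by $t\BB_\dR^+\otimes D$. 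Once that inclusion is in hand, the rest is formal.
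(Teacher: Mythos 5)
There is a genuine gap, and it sits exactly at the point you flagged as the ``main (mild) obstacle'': the inclusion $\Fil_j^0(\BB_\dR\otimes_{K_0}D)\supset t\,\BB_\dR^+\otimes_{K_0}D$ is false in general, and with it the whole of your third step collapses. The convolution filtration is $\Fil^0(\BB_\dR\otimes_{K_0}D)=\sum_{i}t^{i}\BB_\dR^+\otimes_K\Fil^{-i}(K\otimes_{K_0}D)$, and to ``absorb $t$ into the $\BB_\dR$-factor'' you would need $t\BB_\dR^+\subset t^{i}\BB_\dR^+$ for the index $i$ with $d\in\Fil^{-i}$, which fails whenever $i>1$. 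Concretely, take $K=K_0=E=\BQ_p$ and $D$ of rank $2$ with basis $f_1,f_2$, and let $\Fil_1$ have jumps so that $\Fil_1^0D=\BQ_pf_1$, $\Fil_1^{-1}D=D$, while $\Fil_2^0D=\Fil_2^{-1}D=\BQ_pf_1$, $\Fil_2^{-2}D=D$. Then $\Fil_1^0(\BB_\dR\otimes D)=\BB_\dR^+f_1\oplus t\BB_\dR^+f_2$ whereas $\Fil_2^0(\BB_\dR\otimes D)=\BB_\dR^+f_1\oplus t^2\BB_\dR^+f_2$: same degree-zero step $\Fil^0(K\otimes_{K_0}D)$, but genuinely distinct lattices, neither containing $t\BB_\dR^+\otimes D$. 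This is not a degenerate corner case; in the application (Lemma \ref{lem:aux}) the two filtrations are deliberately chosen to differ in strictly negative degrees, so the lemma really is a statement about two \emph{different} lattices giving the same kernel, not a disguised identity of lattices.

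What the argument actually needs, and what your proposal never invokes, is Tate's computation $H^0(G_K,\BC_p(i))=H^1(G_K,\BC_p(i))=0$ for $i\neq 0$. Writing $L_j=\Fil_j^0(\BB_\dR\otimes_{K_0}D)$, the hypothesis $\Fil_1^0(K\otimes_{K_0}D)=\Fil_2^0(K\otimes_{K_0}D)$ says precisely that $L_1$ and $L_2$ have the same image in the degree-zero graded piece $\gr^0(\BB_\dR\otimes_{K_0}D)\cong\BC_p\otimes_K(K\otimes_{K_0}D)$; hence the successive quotients of $L_1/(L_1\cap L_2)\cong(L_1+L_2)/L_2$ are direct sums of $\BC_p(i)$'s with $i\neq 0$ only, so all their $H^0$ and $H^1$ vanish. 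Feeding this into the exact sequence comparing $(\BB_\dR\otimes_{K_0}D)/L_1$ with $(\BB_\dR\otimes_{K_0}D)/(L_1\cap L_2)$ shows that $H^1\bigl(G_K,(\BB_\dR\otimes_{K_0}D)/(L_1\cap L_2)\bigr)\to H^1\bigl(G_K,(\BB_\dR\otimes_{K_0}D)/L_1\bigr)$ is injective, so the kernel computed with $L_1$ equals the one computed with $L_1\cap L_2$, and symmetrically for $L_2$. Your first two steps (identifying the kernel via the long exact sequence (\ref{eq:long-exact-sq}), and locating the hypothesis in the degree-zero graded piece) are sound and consistent with the intended argument (the paper defers to \cite[Proposition 2.5]{Xie2015}); it is only the final d\'evissage that must be replaced by the vanishing of Tate twists rather than an equality of lattices.
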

\begin{proof} The proof is similar to that of \cite[Proposition 2.5]{Xie2015}
\end{proof}

\noindent{\it Proof of Lemma \ref{lem:aux}.} The argument is similar
to the proof of \cite[Lemma 5.1]{Xie2015}. We only give a sketch.

Write
$c_\sigma=\lambda_{1,\sigma}f_1+\lambda_{2,\sigma}f_2+\lambda_{3,\sigma}f_3$.
As $c$ takes values in $\BX_\log(D)$, we have $\lambda_{2,\sigma},
\lambda_{3,\sigma}\in E$. This ensures the existence of
$\gamma_{2,0}, \gamma_{2,\tau}, \gamma_{3,0},\gamma_{3,\tau}$.

Let $Fil$ be the filtration on $D$ such that $Fil^{-1}D=D$ and
$Fil^iD=\Fil^iD$ if $i\geq 0$. Then $(D, Fil)$ is admissible. Let
$V$ be the semistable $E$-representation of $G_K$ attached to
$D_V=(D, Fil)$. By Lemma \ref{lem:same-kernel}, $[c]$ is in the
kernel of $ H^1(G_K,\BX_\log(D_V))\rightarrow H^1(G_K,\BX_\dR(D_V))$
and so there exists a $1$-cocycle $c^{(1)}: G_K\rightarrow V$ such
that the image of $[c^{(1)}]$ by $H^1(G_K,V)\rightarrow
H^1(G_K,\BX_\log(D_V))$ is $[c]$.

We form the following commutative diagram
\begin{equation}\label{eq:comm-diag-galois-repn}
\xymatrix{ && V' \ar[d]  \\ 0 \ar[r] & V_0 \ar[r] \ar@{=}[d] & V
\ar[r]\ar[d]^{\pi_{V,V_1}} & T \ar[r]\ar[d] & 0
\\  0 \ar[r] & V_0 \ar[r] & V_1 \ar[r] & T_1 \ar[r] &
0 }\end{equation} with the horizontal lines being exact, where $V_0$
(resp. $V'$) is the subrepresentation of $V$ corresponding to the
filtered $E$-$(\varphi,N)$-submodule of $D_V$ generated by $f_1$
(resp. by $f_2 + f_3$) which is admissible. From
(\ref{eq:comm-diag-galois-repn}) we obtain the following commutative
diagram
$$ \xymatrix{ H^1(G_K, V) \ar[r]\ar[d]^{\pi_{V, V_1}} & H^1(G_K, T) \ar[r]\ar[d] & H^2(G_K, V_0)\ar@{=}[d] \\
H^1(G_K, V_1)\ar[r] & H^1(G_K, T_1) \ar[r] & H^2(G_K, V_0)  , } $$
where the horizontal lines are exact.

Write $c^{(2)}$ for the $1$-cocycle $G_K\xrightarrow{c^{(1)}}
V\rightarrow T\rightarrow T_1.$ By a simple computation we obtain
$$[c^{(2)}]=  [\Big( (\gamma_{2,0}
-\gamma_{3,0})\psi_0 +
\sum_{\tau\in\mathrm{Emb}(K,E)}(\gamma_{2,\tau}-\gamma_{3,\tau})\psi_\tau
\Big)\bar{f}_2],
$$ where $\bar{f}_2$ is the image of $f_2\in V$ in $T_1$.
Note that $T_1$ is isomorphic to $E$, and $V_0$ is isomorphic to
$E(1)$. Being the image of $[\pi_{V,V_1}(c^{(1)})]$ in $H^1(T_1)$,
$[c^{(2)}]$ lies in the kernel of $H^1(G_K,T_1)\rightarrow
H^2(G_K,V_0)$. By \cite[Lemma 5.5]{Zhang}, as an extension of $E$ by
$E(1)$, $V_1$ corresponds to the element $[(p)] + \exp(\vec{\CL})$.
Now Lemma \ref{lem:zhang} yields our second assertion. \qed

\section{$L$-invariants} \label{sec:L-inv}

Let $D$ be a filtered $E$-$(\varphi,N)$-module of rank $n$. Fix a
refinement $\CF$ of $D$. Then $\CF$ fixes an ordering $\alpha_1,
\cdots, \alpha_n$ of the eigenvalues of $\varphi^{[K_0:\BQ_p]}$ and
an ordering $\vec{k}_1, \cdots, \vec{k}_n$ of the Hodge-Tate
weights.

\subsection{The operator $N_{\CF}$}

The operator $\varphi$ induces a $K_0\otimes_{\BQ_p}E$-semilinear
operator $\varphi_\CF$ on $\gr^{\CF}_\bullet
D=\bigoplus\limits_{i=1}^n \CF_{i}D/\CF_{i-1}D$.

We define a $K_0\otimes_{\BQ_p}E$-linear operator $N_\CF$ on
$\gr^\CF_\bullet D$. The definition is similar to the one defined in
\cite{Xie2015}, so we omit some details.

For any $i\in \{1, \cdots, n\}$, if $N(\CF_i D)=N (\CF_{i-1}D)$, we
demand that $N_\CF$ maps $\gr^\CF_i D$ to zero.

Now we assume that $N(\CF_i D) \supsetneq N (\CF_{i-1} D)$. Let $j$
be the minimal integer such that $$ N (\CF_i D) \subseteq
N(\CF_{i-1}D) + \CF_j D.
$$

\begin{prop}
$N (\CF_{i-1}D) \cap \CF_j D = N (\CF_{i-1}D)\cap \CF_{j-1}D$.
\end{prop}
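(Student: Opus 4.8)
The inclusion $N(\CF_{i-1}D)\cap\CF_{j-1}D\subseteq N(\CF_{i-1}D)\cap\CF_j D$ is trivial, so the content is the reverse inclusion $N(\CF_{i-1}D)\cap\CF_j D\subseteq\CF_{j-1}D$, which I would prove by contradiction. Before doing so, record that $j\geq 1$: we are in the case $N(\CF_i D)\supsetneq N(\CF_{i-1}D)=N(\CF_{i-1}D)+\CF_0 D$, so the index $0$ does not satisfy the defining property of $j$. The whole argument is really linear algebra over a field; the only wrinkle is that the coefficient ring is $R:=K_0\otimes_{\BQ_p}E\cong\prod_{\tau\colon K_0\hookrightarrow E}E$, so I would fix this decomposition and, for an $R$-submodule $X$ of $D$, write $X_\tau=e_\tau X$ for its $\tau$-component (the $\CF_k D$ are $R$-submodules since each $\gr^\CF_k D$ is free of rank $1$ over $R$, and $N(\CF_{i-1}D)$ is an $R$-submodule since $N$ is $R$-linear).

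Suppose then that there is $q\in N(\CF_{i-1}D)\cap\CF_j D$ with $q\notin\CF_{j-1}D$, and pick $\tau$ with $q_\tau\notin(\CF_{j-1}D)_\tau$. Then $q_\tau\in(N(\CF_{i-1}D))_\tau\cap(\CF_j D)_\tau$, and since $(\CF_j D)_\tau/(\CF_{j-1}D)_\tau=(\gr^\CF_j D)_\tau$ is one-dimensional over $E$ with $q_\tau$ mapping to a nonzero vector, we get $(\CF_j D)_\tau=(\CF_{j-1}D)_\tau+E\,q_\tau$. Taking $\tau$-components in the defining inclusion $N(\CF_i D)\subseteq N(\CF_{i-1}D)+\CF_j D$ and using $E\,q_\tau\subseteq(N(\CF_{i-1}D))_\tau$ then yields
\[
(N(\CF_i D))_\tau\ \subseteq\ (N(\CF_{i-1}D))_\tau+(\CF_{j-1}D)_\tau .
\]

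It remains to propagate this containment to every component. Here I would use that $\varphi$ is a bijective $\varphi\otimes 1$-semilinear operator on $D$, so it cyclically permutes the components $D_\tau$ (because $\varphi_{K_0}$ generates $\Gal(K_0/\BQ_p)$ and acts simply transitively on the embeddings $K_0\hookrightarrow E$); moreover $\CF_i D$, $\CF_{i-1}D$, $\CF_{j-1}D$ are $\varphi$-stable by the definition of a refinement, and since $\varphi N=p^{-1}N\varphi$ the subspaces $N(\CF_i D)$, $N(\CF_{i-1}D)$, and hence $N(\CF_i D)$ and $N(\CF_{i-1}D)+\CF_{j-1}D$, are $\varphi$-stable as well; being finite dimensional over $E$ with $\varphi$ injective, each of these is carried onto itself by $\varphi$, which therefore sends its $\tau$-component onto its $\sigma(\tau)$-component for the cyclic permutation $\sigma$. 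Applying powers of $\varphi$ to the displayed inclusion then gives $(N(\CF_i D))_{\tau'}\subseteq(N(\CF_{i-1}D))_{\tau'}+(\CF_{j-1}D)_{\tau'}$ for every $\tau'$, hence $N(\CF_i D)\subseteq N(\CF_{i-1}D)+\CF_{j-1}D$, contradicting the minimality of $j$. I expect this component bookkeeping (plus the verification that the relevant spaces are $\varphi$-stable) to be the only mildly delicate part: over $\BQ_p$, where $R$ is the field $E$, the argument collapses to the observation that if $\CF_j D=\CF_{j-1}D+E\,q$ with $q\in N(\CF_{i-1}D)$ then $N(\CF_{i-1}D)+\CF_j D=N(\CF_{i-1}D)+\CF_{j-1}D$.
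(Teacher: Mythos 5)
Your argument is correct and rests on the same mechanism as the paper's proof: the $\varphi$-stability of $N(\CF_i D)$ and of $N(\CF_{i-1}D)+\CF_{j-1}D$, together with the fact that Frobenius permutes the idempotent components of $K_0\otimes_{\BQ_p}E$ transitively, played off against the minimality of $j$. The paper packages exactly this into one line --- the quotient $(N(\CF_{i-1}D)+\CF_j D)/(N(\CF_{i-1}D)+\CF_{j-1}D)$ is a $\varphi$-module, hence free over $K_0\otimes_{\BQ_p}E$, so the surjection from the rank-one module $\gr^\CF_j D$ onto it must be an isomorphism --- whereas you unpack it into component-by-component bookkeeping; both are fine.
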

\begin{proof} Note that $\CF_jD$, $\CF_{j-1}D$, $N (\CF_{i-1}D) + \CF_j D$ and
$N(\CF_{i-1}D)+\CF_{j-1}D$ are stable by $\varphi$. Thus $(N
(\CF_{i-1}D) + \CF_j D)/(N (\CF_{i-1}D) + \CF_{j-1} D)$ is a
$\varphi$-module, and so must be free over $K_0\otimes_{\BQ_p}E$.
Hence the map
\begin{equation} \label{eq:N-CF-iso}
\CF_jD/\CF_{j-1}D \rightarrow  (N (\CF_{i-1}D) + \CF_j D)/(N
(\CF_{i-1}D) + \CF_{j-1} D) \end{equation} is an isomorphism. It
follows that $N (\CF_{i-1}D) \cap \CF_j D = N (\CF_{i-1}D)\cap
\CF_{j-1}D$.
\end{proof}

The operator $N$ induces a $K_0\otimes_{\BQ_p}E$-linear map
$$ \CF_iD/\CF_{i-1}D \rightarrow (N
(\CF_{i-1}D) + \CF_j D)/(N (\CF_{i-1}D) + \CF_{j-1} D). $$ We define
the map $N_\CF: \gr_i^\CF D \rightarrow \gr^\CF_{j}D$ to be the
composition of this map and the inverse of (\ref{eq:N-CF-iso}).

Finally we extend $N_\CF$ to the whole $\gr^\CF_\bullet D$ by
$K_0\otimes_{\BQ_p}E$-linearity. Note that
$N_\CF\varphi_\CF=p\varphi_\CF N_\CF$. By definition, for any $i$ we
have either $N(\gr^\CF_i D) = 0$ or $N(\gr^\CF_i D)= \gr^\CF_j D$
for some $j$.

\begin{defn} For $j\in \{1,\cdots, n-1\}$ we say that $j$ is {\it marked} (or a {\it marked index}) for $\CF$
if there is some $i\in \{2, \cdots, n\}$ such that $N_\CF(\gr^\CF_i
D)= \gr^\CF_j D$.
\end{defn}

Note that $i$ and $j$ in the above definition are determined by each
other. We write $i=t_\CF(j)$ and $j=s_\CF(i)$.

\begin{prop}  The following two assertions are
equivalent:
\begin{enumerate}
\item\label{it:critical-a} $s$ is marked and $t=t_\CF(s)$.
\item\label{it:critical-b} $N\CF_{t-1}D \cap \CF_s D = N\CF_{t-1}D \cap \CF_{s-1} D$ and
$N\CF_{t}D \cap \CF_s D \supsetneq N\CF_{t}D \cap \CF_{s-1} D$.
\end{enumerate}
\end{prop}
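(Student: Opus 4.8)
The plan is to unwind both conditions into statements purely about how $N$ interacts with the refinement filtration, and then show directly that (\ref{it:critical-a}) $\Leftrightarrow$ (\ref{it:critical-b}). First I would recall the construction of $N_\CF$: for the index $i$, either $N(\CF_i D) = N(\CF_{i-1}D)$, in which case $N_\CF(\gr^\CF_i D) = 0$; or $N(\CF_i D) \supsetneq N(\CF_{i-1}D)$, in which case $j = s_\CF(i)$ is the \emph{minimal} integer with $N(\CF_i D) \subseteq N(\CF_{i-1}D) + \CF_j D$, and $N_\CF(\gr^\CF_i D) = \gr^\CF_j D$ is nonzero by the preceding proposition (the map (\ref{eq:N-CF-iso}) is an isomorphism, so $\gr^\CF_j D$ genuinely receives the image). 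Thus ``$s$ marked with $t = t_\CF(s)$'' is by definition equivalent to: $N(\CF_t D) \supsetneq N(\CF_{t-1}D)$, and $s$ is the minimal integer with $N(\CF_t D) \subseteq N(\CF_{t-1}D) + \CF_s D$.

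Next I would translate this minimality condition into the two displayed incidence relations in (\ref{it:critical-b}). The statement ``$N(\CF_t D) \not\subseteq N(\CF_{t-1}D) + \CF_{s-1}D$'' is, after a small manipulation, equivalent to $N\CF_t D \cap \CF_s D \supsetneq N\CF_t D \cap \CF_{s-1}D$: indeed, since all the modules involved are $\varphi$-stable and the relevant quotients are free over $K_0 \otimes_{\BQ_p} E$, one can pass freely between ``$N(\CF_t D)$ is not contained in $N(\CF_{t-1}D)+\CF_{s-1}D$'' and ``$N(\CF_t D)$ has a nonzero image in $\gr^\CF_s$ modulo $N(\CF_{t-1}D)$,'' and the latter, via the isomorphism (\ref{eq:N-CF-iso}) applied with $i-1 = t-1$, $j = s$, is exactly the strict inclusion of intersections. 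The other relation, ``$N\CF_{t-1}D \cap \CF_s D = N\CF_{t-1}D \cap \CF_{s-1}D$,'' should follow from the minimality of $s$ together with the proposition just proved: if this equality failed, then $N(\CF_{t-1}D)$ would already meet $\CF_s D$ outside $\CF_{s-1}D$, and a diagram chase with the $\varphi$-stable quotients would produce a contradiction either with minimality of $s$ for the index $t$ or — more to the point — this is precisely the content of ``$N(\CF_{t-1}D) \cap \CF_s D = N(\CF_{t-1}D)\cap \CF_{j-1}D$'' from the preceding proposition, with $j$ there being the minimal integer for $i = t$, i.e.\ $j = s$. So this equality is automatic once $s = s_\CF(t)$, and the real content of (\ref{it:critical-b}) is carried by the strict-inclusion clause.

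For the converse, I would assume (\ref{it:critical-b}) and recover minimality: the strict inclusion $N\CF_t D \cap \CF_s D \supsetneq N\CF_t D \cap \CF_{s-1}D$ forces $N(\CF_t D) \subseteq N(\CF_{t-1}D) + \CF_s D$ to hold with $s$ as small as possible — if $N(\CF_t D)$ were contained in $N(\CF_{t-1}D) + \CF_{s-1}D$ we would get equality of intersections at level $s-1 < s$ and, after translating back, equality rather than strict inclusion at level $s$, a contradiction; and the equality clause at level $t-1$ guarantees $N(\CF_t D) \supsetneq N(\CF_{t-1}D)$ is the relevant nondegenerate case. Combining these gives $s = s_\CF(t)$, i.e.\ $t = t_\CF(s)$ and $s$ is marked.

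The main obstacle I anticipate is bookkeeping rather than conceptual: carefully justifying the passages between ``containment of $N(\CF_t D)$ in $N(\CF_{t-1}D) + \CF_j D$'' and ``intersection equalities $N\CF_\bullet D \cap \CF_j D$'' requires repeated use of the freeness (over $K_0 \otimes_{\BQ_p} E$) of the graded pieces of $\varphi$-stable submodules and the resulting fact that the natural maps like (\ref{eq:N-CF-iso}) are isomorphisms — exactly the mechanism of the preceding proposition. One must be vigilant that $N(\CF_{t-1}D)$ is $\varphi$-stable (it is, since $N\varphi = p\varphi N$) so that all the sums and intersections in play are themselves $\varphi$-modules; once that is in place, each equivalence is a short lattice-theoretic argument in the category of $K_0 \otimes_{\BQ_p} E$-modules with semilinear $\varphi$.
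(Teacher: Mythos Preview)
Your plan for (\ref{it:critical-a}) $\Rightarrow$ (\ref{it:critical-b}) is fine and matches the paper: the equality clause is exactly the preceding proposition with $i=t$, $j=s$, and the strict inclusion follows from minimality of $s$ together with the containment $N(\CF_t D)\subseteq N(\CF_{t-1}D)+\CF_s D$.

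The gap is in (\ref{it:critical-b}) $\Rightarrow$ (\ref{it:critical-a}). You argue only one inequality: assuming $N(\CF_t D)\subseteq N(\CF_{t-1}D)+\CF_{s-1}D$ leads to a contradiction with the strict-inclusion clause, so $s_\CF(t)\geq s$. But you never establish the containment $N(\CF_t D)\subseteq N(\CF_{t-1}D)+\CF_s D$, i.e.\ $s_\CF(t)\leq s$; you simply assert that the strict inclusion ``forces'' it. That is not a lattice-theoretic triviality: knowing $N\CF_t D\cap\CF_s D\supsetneq N\CF_t D\cap\CF_{s-1}D$ says nothing a priori about whether all of $N(\CF_t D)$ lies in $N(\CF_{t-1}D)+\CF_s D$. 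And you cannot invoke the isomorphism (\ref{eq:N-CF-iso}) with $j=s$ here, since that proposition presupposes $s$ is the minimal index --- precisely what you are trying to prove.

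The missing step, which the paper supplies, uses both clauses of (\ref{it:critical-b}) together with the fact that $N_\CF$ is injective on $\gr^\CF_t D$ once it is nonzero. From the two clauses one extracts $x\in\CF_t D\setminus\CF_{t-1}D$ with $N(x)\in\CF_s D$ (the equality clause is what forces $x\notin\CF_{t-1}D$). If the true minimal index $j=s_\CF(t)$ were $>s$, then $N(x)\in\CF_s D\subseteq\CF_{j-1}D$ would give $N_\CF(x+\CF_{t-1}D)=0$ in $\gr^\CF_j D$, contradicting that $N_\CF:\gr^\CF_t D\to\gr^\CF_j D$ is an isomorphism. This is the genuine content of the converse direction, and it is absent from your sketch.
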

\begin{proof} We have already seen that, if (\ref{it:critical-a}) holds, then (\ref{it:critical-b}) holds.
Conversely, we assume that (\ref{it:critical-b}) holds. Then
$N\CF_{t}D \cap \CF_s D \supsetneq N \CF_{t-1}D\cap \CF_sD$. Thus
$N\CF_{t}D \supsetneq N \CF_{t-1}D$.

We show that $N\CF_tD \varsubsetneq N \CF_{t-1}D +\CF_{s-1}D$. If it
is not true, then there exists $y\in \CF_tD \backslash \CF_{t-1}D$
which is a lifting of a basis of $\gr^\CF_tD$ over
$K_0\otimes_{\BQ_p}E$ such that $N(y)\in \CF_{s-1}D$. For any $z\in
\CF_tD$, write $z=w+\lambda y$ with $w\in \CF_{t-1}D$ and
$\lambda\in K_0\otimes_{\BQ_p}E$. If $N(z)$ is in $\CF_s D$, then
$N(w)$ is also in $\CF_s D$. But $N\CF_{t-1}D\cap \CF_s D
=N\CF_{t-1}D\cap \CF_{s-1}D$. Thus $N(w)$ is in $\CF_{s-1}D$, which
implies that $N(z)=N(w)+\lambda N(y)$ is also in $\CF_{s-1}D$. So,
$N\CF_tD\cap \CF_s D=N\CF_tD\cap \CF_{s-1}D$, a contradiction.

From $N\CF_{t}D \cap \CF_s D \supsetneq N \CF_{t-1}D\cap \CF_sD$ we
see that there is $x\in \CF_tD \backslash \CF_{t-1}D$ such that
$N(x)\in \CF_sD$. We must have  $N \CF_t D \subseteq N \CF_{t-1}D +
\CF_sD$. Otherwise, let $j$ be the smallest integer such that $N
\CF_t D \subseteq N \CF_{t-1}D + \CF_jD$ and assume that $j>s$. Then
$N_\CF(x+\CF_{t-1}D)=0$, which contradicts the fact that $N_\CF:
\gr^\CF_t D \rightarrow \gr^\CF_j D$ is an isomorphism.
\end{proof}

\subsection{Strongly marked indices and
$\CL$-invariants} \label{ss:strong-crit-L-inv}

Assume that $s$ is marked for $\CF$ and $t=t_\CF(s)$. We consider
the decompositions
$$ \CF_t D / \CF_{s-1} D = (K_0\otimes_{\BQ_p} E)\cdot \bar{e}_s \oplus L \oplus (K_0\otimes_{\BQ_p} E) \bar{e}_t
$$
that satisfy the following conditions:

$\bullet$ $\overline{\CF}_1 (\CF_t D / \CF_{s-1}
D)=(K_0\otimes_{\BQ_p}E)\bar{e}_s$ and
$\overline{\CF}_{t-s}(\CF_tD/\CF_{s-1} D) = (K_0\otimes_{\BQ_p}E)
\bar{e}_s\oplus L$, where $\overline{\CF}$ is the refinement on
$\CF_tD/\CF_{s-1} D$ induced by $\CF$.

$\bullet$ Both $L$ and $(K_0\otimes_{\BQ_p}E) \bar{e}_s \oplus
(K_0\otimes_{\BQ_p}E) \bar{e}_t$ are stable by $\varphi$ and $N$;
$\varphi^{[K_0:\BQ_p]}(\bar{e}_t)=\alpha_t \bar{e}_t$ and
$N(\bar{e}_t)=\bar{e}_s$.

\noindent Such a decomposition is called an {\it $s$-decomposition.}

\begin{rem} $s$-decompositions may be not exist. However, if $\varphi$ is
semisimple, then $s$-decompositions always exist (see
\cite{Xie2015}).
\end{rem}

Let $\mathrm{dec}$ denote an $s$-decomposition $ \CF_t D / \CF_{s-1}
D = E \bar{e}_s \oplus L \oplus E \bar{e}_t. $

There is a natural isomorphism $E \bar{e}_s \oplus E
\bar{e}_t\rightarrow (\CF_tD/\CF_{s-1}D)/L$ of
$(\varphi,N)$-modules. Usually the filtration on the filtered
$E$-$(\varphi,N)$-submodule $E \bar{e}_s \oplus E \bar{e}_t$ and
that on $(\CF_tD/\CF_{s-1}D)/L$ are different.

When these two filtrations satisfy certain compatible condition, we
say the decomposition $\mathrm{dec}$ is perfect. Precisely, we say
that $\mathrm{dec}$ is {\it perfect} if for any $\tau:
K\hookrightarrow E$ we have $k_{s,\tau}< k_{t, \tau}$, and if there
exist $k'_{s,\tau}, k'_{t,\tau}$ and $\CL_{\mathrm{dec},\tau}\in E$
satisfying $k_{s,\tau}\leq k'_{s,\tau}< k'_{t,\tau}\leq k_{t,\tau}$
such that the following conditions hold.

$\bullet$ The filtration on the filtered $E$-$(\varphi,N)$-submodule
$E \bar{e}_s \oplus E \bar{e}_t$ satisfies
$$\Fil^{i}_\tau (E \bar{e}_s \oplus E \bar{e}_t) = \left\{ \begin{array}{ll} E \bar{e}_{s,\tau}\oplus E \bar{e}_{t,\tau} & \text{ if }i \leq k_{s,\tau}, \\
E(\bar{e}_{t,\tau} + \CL_{\mathrm{dec,\tau}} \bar{e}_{s,\tau}) & \text{ if } k_{s,\tau} < i\leq k'_{t,\tau} ,\\
0 & \text{ if } i > k'_{t,\tau},
\end{array}\right.$$

$\bullet$ The filtration on the quotient of $\CF_tD/\CF_{s-1}D$ by
$L$ satisfies
$$\Fil^{i}_\tau \CF_tD/\CF_{s-1}D = \left\{ \begin{array}{ll} E \bar{e}_{s,\tau}\oplus E \bar{e}_{t,\tau} & \text{ if }i \leq k'_{s,\tau}, \\
E(\bar{e}_t + \CL_{\mathrm{dec},\tau} \bar{e}_s) & \text{ if } k'_{s,\tau} < i\leq k_{t,\tau} ,\\
0 & \text{ if } i > k_{t,\tau},
\end{array}\right.$$ where the images of $\bar{e}_{s}$ and
$\bar{e}_t$ in $\CF_tD/\CF_{s-1}D$ are again denoted by $\bar{e}_s$
and $\bar{e}_t$.

\begin{defn}\label{defn:Fontaine-Mazur}
If there exists a perfect $s$-decomposition, we say that $s$ is {\it
strongly marked} (or a {\it strongly marked index}). In this case we
attached to each pair $(s,t)$  with $t=t_\CF(s)$ an invariant
$\vec{\CL}_{\CF,s,t}=(\CL_{\mathrm{dec},\tau})_{\tau}$, where
$\mathrm{dec}$ is a perfect $s$-decomposition. Proposition
\ref{prop:fon-mazur-inv} below tells us that $\vec{\CL}_{\CF,s,t}$
is independent of the choice of perfect $s$-decompositions. We call
$\vec{\CL}_{\CF,s,t}$ the {\it Fontaine-Mazur $\CL$-invariant}
associated to $(\CF,s,t)$, and denote $\CL_{\mathrm{dec},\tau}$ by
$\CL_{\CF, s,t, \tau}$.
\end{defn}

In the case of $t=s+1$, $s$ is strongly marked if and only if
$k_{s,\tau}<k_{t,\tau}$ for all $\tau$.

\begin{prop} \label{prop:fon-mazur-inv}
If $\mathrm{dec}_1$ and $\mathrm{dec}_2$ are two perfect
$s$-decompositions, then $\CL_{\mathrm{dec}_1,
\tau}=\CL_{\mathrm{dec}_2,\tau}$ for any $\tau$.
\end{prop}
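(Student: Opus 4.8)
The plan is to prove that $\CL_{\mathrm{dec},\tau}$ depends only on the triple $(\CF,s,t)$ and not on the particular perfect $s$-decomposition $\mathrm{dec}$, by bounding the ways in which two such decompositions can differ. Throughout put $Q:=\CF_tD/\CF_{s-1}D$ with its induced refinement $\overline{\CF}$ and, for each $\tau\in\mathrm{Emb}(K,E)$, its induced filtration $\Fil^\bullet_\tau Q$. Recall that $\overline{\CF}_1Q=\gr^{\CF}_sD$, that $Q/\overline{\CF}_{t-s}Q\cong\gr^{\CF}_tD$, that $N_\CF$ induces an isomorphism $\gr^{\CF}_tD\xrightarrow{\sim}\gr^{\CF}_sD$, and hence that $\varphi^{[K_0:\BQ_p]}$ acts on $\gr^{\CF}_tD$, resp. on $\gr^{\CF}_sD$, by $\alpha_t$, resp. by $\alpha_s=p^{-[K_0:\BQ_p]}\alpha_t$; in particular $\alpha_s-\alpha_t$ is a unit of $K_0\otimes_{\BQ_p}E$.

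First I would normalize. Given a perfect $s$-decomposition, rescaling $\bar{e}_t$ by a unit $c\in(K_0\otimes_{\BQ_p}E)^\times$ forces $\bar{e}_s=N(\bar{e}_t)$ to be rescaled by the same $c$, preserves the conditions $N(\bar{e}_t)=\bar{e}_s$ and $\varphi^{[K_0:\BQ_p]}(\bar{e}_t)=\alpha_t\bar{e}_t$, and leaves every line $E(\bar{e}_{t,\tau}+\CL_{\mathrm{dec},\tau}\bar{e}_{s,\tau})$ unchanged; hence it does not affect $\vec{\CL}_{\mathrm{dec}}$. So, given two perfect $s$-decompositions $\mathrm{dec}_1$ and $\mathrm{dec}_2$, we may assume they have the same $\bar{e}_s$. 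Put $v:=\bar{e}_t^{(2)}-\bar{e}_t^{(1)}$ and $\CL^{(i)}_\tau:=\CL_{\mathrm{dec}_i,\tau}$. Then $Nv=0$ and $\varphi^{[K_0:\BQ_p]}v=\alpha_tv$; since $\bar{e}_t^{(1)}$ and $\bar{e}_t^{(2)}$ are lifts of the same basis $N_\CF^{-1}(\bar{e}_s)$ of $\gr^{\CF}_tD$, we have $v\in\overline{\CF}_{t-s}Q$; and because the $\bar{e}_s$-component of $v$ in $\overline{\CF}_{t-s}Q=(K_0\otimes_{\BQ_p}E)\bar{e}_s\oplus L^{(i)}$ would be a $\varphi^{[K_0:\BQ_p]}$-eigenvector for $\alpha_t$ in the $\alpha_s$-eigenline $(K_0\otimes_{\BQ_p}E)\bar{e}_s$, it vanishes; thus $v\in L^{(1)}\cap L^{(2)}$.

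Since $L$ does not occur in the first bullet of the definition of perfectness, that bullet, applied to $\mathrm{dec}_1$ and to $\mathrm{dec}_2$ and using that the filtration induced on a submodule is its intersection with $\Fil^\bullet_\tau Q$, gives
$$
\bar{e}_{t,\tau}^{(1)}+\CL^{(1)}_\tau\bar{e}_{s,\tau}\in\Fil^{k_{s,\tau}+1}_\tau Q
\qquad\text{and}\qquad
\bar{e}_{t,\tau}^{(1)}+v_\tau+\CL^{(2)}_\tau\bar{e}_{s,\tau}\in\Fil^{k_{s,\tau}+1}_\tau Q .
$$
The same bullet shows that $\bar{e}_{s,\tau}$ and $\bar{e}_{t,\tau}^{(i)}$ lie in $\Fil^{k_{s,\tau}}_\tau Q$, that the intersection of $\Fil^{k_{s,\tau}+1}_\tau Q$ with the plane spanned by $\bar{e}_{s,\tau},\bar{e}_{t,\tau}^{(i)}$ is the graph line $E(\bar{e}_{t,\tau}^{(i)}+\CL^{(i)}_\tau\bar{e}_{s,\tau})$, and hence that $\bar{e}_{s,\tau}\notin\Fil^{k_{s,\tau}+1}_\tau Q$. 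Subtracting the two displayed memberships,
$$
v_\tau+\big(\CL^{(2)}_\tau-\CL^{(1)}_\tau\big)\bar{e}_{s,\tau}\in\Fil^{k_{s,\tau}+1}_\tau Q ,
$$
and, since $\bar{e}_{t,\tau}^{(1)},\bar{e}_{t,\tau}^{(2)}\in\Fil^{k_{s,\tau}}_\tau Q$, also $v_\tau\in\Fil^{k_{s,\tau}}_\tau Q$. So everything comes down to showing that $v_\tau$ dies in the graded piece $\Fil^{k_{s,\tau}}_\tau Q/\Fil^{k_{s,\tau}+1}_\tau Q$, i.e. $v_\tau\in\Fil^{k_{s,\tau}+1}_\tau Q$: granting that, the displayed relation forces $\big(\CL^{(2)}_\tau-\CL^{(1)}_\tau\big)\bar{e}_{s,\tau}\in\Fil^{k_{s,\tau}+1}_\tau Q$, whence $\CL^{(2)}_\tau=\CL^{(1)}_\tau$ because $\bar{e}_{s,\tau}\notin\Fil^{k_{s,\tau}+1}_\tau Q$.

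This last step is where I expect the real work to lie: showing that the $\varphi^{[K_0:\BQ_p]}$-eigenvector $v\in L^{(1)}\cap L^{(2)}\subseteq\overline{\CF}_{t-s}Q$ actually lies in $\Fil^{k_{s,\tau}+1}_\tau Q$ for every $\tau$. For this one must bring in the second bullet of the definition of perfectness — the compatibility it demands between the filtration on $(K_0\otimes_{\BQ_p}E)\bar{e}_s\oplus(K_0\otimes_{\BQ_p}E)\bar{e}_t$ and the quotient filtration on $Q/L^{(i)}$ — and combine it with the facts that $v\in L^{(i)}$ (so $v$ maps to $0$ in $Q/L^{(i)}$) and that $\alpha_s\neq\alpha_t$, so as to locate $v$ in $\Fil^\bullet_\tau Q$. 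I would run this as an embedding-by-embedding comparison of the graded pieces $\Fil^i_\tau Q/\Fil^{i+1}_\tau Q$ for $k_{s,\tau}\le i\le k_{t,\tau}$, exactly parallel to the computation in the case $K=\BQ_p$, $\varphi$ semisimple carried out in \cite[\S\,5]{Xie2015}; the genuinely new features are that $\varphi$ need not be semisimple — which is why $v$ can be nonzero and why perfect $s$-decompositions need not exist a priori, so the statement only concerns the ones that do — and that the bookkeeping must be done over each embedding $\tau$ separately.
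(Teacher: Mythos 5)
Your reductions up to the membership $v_\tau+\bigl(\CL^{(2)}_\tau-\CL^{(1)}_\tau\bigr)\bar e_{s,\tau}\in\Fil^{k_{s,\tau}+1}_\tau Q$ are correct: the rescaling normalization, the fact that $v$ lies in $\overline{\CF}_{t-s}Q$ and then (since $\alpha_s=p^{-[K_0:\BQ_p]}\alpha_t\neq\alpha_t$ and both summands of $\overline{\CF}_{t-s}Q=(K_0\otimes_{\BQ_p}E)\bar e_s\oplus L^{(i)}$ are $\varphi$-stable) in $L^{(1)}\cap L^{(2)}$, and the consequences you draw from the first bullet of perfectness. The problem is the step you explicitly postpone. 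Given your reductions, the claim $v_\tau\in\Fil^{k_{s,\tau}+1}_\tau Q$ is not a technical remainder: it is \emph{equivalent} to the proposition (if $\CL^{(1)}_\tau=\CL^{(2)}_\tau$, the same displayed membership gives $v_\tau\in\Fil^{k_{s,\tau}+1}_\tau Q$, and conversely). So what you have is a correct reformulation followed by an appeal to ``the second bullet'' and to the semisimple case of \cite{Xie2015}, which is not an argument; this is a genuine gap, and it is exactly where the content of the proposition sits. (The paper itself only cites \cite[Proposition 4.9]{Xie2015} here, so there is no written proof to compare against; but your proposal as it stands does not prove the statement.)

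The gap can be closed from the point you reached, and more easily by working in the quotients $Q/L^{(i)}$ than by locating $v$ inside $\Fil^\bullet_\tau Q$. Write $a_{i,\tau}, b_{i,\tau}$ for the constants $k'_{s,\tau}, k'_{t,\tau}$ attached to $\mathrm{dec}_i$, and put $M^{(i)}=(K_0\otimes_{\BQ_p}E)\bar e_s\oplus(K_0\otimes_{\BQ_p}E)\bar e^{(i)}_t\subset Q$. Since $v\in L^{(2)}$, the composite $M^{(1)}\hookrightarrow Q\rightarrow Q/L^{(2)}$ sends $\bar e_s\mapsto\bar e_s$ and $\bar e^{(1)}_t\mapsto\bar e^{(2)}_t$, and it carries $\Fil^j_\tau M^{(1)}=M^{(1)}_\tau\cap\Fil^j_\tau Q$ into the quotient filtration $\Fil^j_\tau(Q/L^{(2)})$. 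Taking $j=b_{1,\tau}$, the first bullet for $\mathrm{dec}_1$ gives $\bar e^{(2)}_{t,\tau}+\CL^{(1)}_\tau\bar e_{s,\tau}\in\Fil^{b_{1,\tau}}_\tau(Q/L^{(2)})$; if $b_{1,\tau}>a_{2,\tau}$, then since $b_{1,\tau}\leq k_{t,\tau}$ the second bullet for $\mathrm{dec}_2$ identifies this filtration step with the line $E(\bar e^{(2)}_{t,\tau}+\CL^{(2)}_\tau\bar e_{s,\tau})$, and comparing $\bar e_t$-coefficients yields $\CL^{(1)}_\tau=\CL^{(2)}_\tau$. Symmetrically, $b_{2,\tau}>a_{1,\tau}$ forces the same conclusion via $M^{(2)}\rightarrow Q/L^{(1)}$. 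One of the two inequalities must hold, since otherwise $a_{1,\tau}<b_{1,\tau}\leq a_{2,\tau}<b_{2,\tau}\leq a_{1,\tau}$, a contradiction. I recommend you either carry out this argument or the graded-piece comparison you allude to in full; as submitted, the decisive step is missing.
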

\begin{proof} The argument is similar to the proof of \cite[Proposition
4.9]{Xie2015}.
\end{proof}

Let $D^*$ be the filtered $E$-$(\varphi, N)$-module that is the dual
of $D$. Let $\check{\CF}$ be the refinement on $D^*$ such that
$$ \check{\CF}_i D^* : = (\CF_{n-i}D)^\bot=\left\{y\in D^*: \langle y, x
\rangle =0 \text{ for all }x\in \CF_{n-i}D \right\}. $$ We call
$\check{\CF}$ the {\it dual refinement} of $\CF$.

If $L\subset M$ are submodules of $D$, then $M^\bot\subset L^\bot$.
The pairing $\langle \cdot, \cdot \rangle:  L^\bot\times M$ induces
a non-degenerate pairing on $L^\bot/M^\bot \times M/L  $, so that we
can identify $L^\bot/M^\bot$ with the dual of $M/L$ naturally. In
particular, $\gr^{\check{\CF}}_i D^*$ is naturally isomorphic to the
dual of $\gr^{\CF}_{n+1-i}D$. Thus $\gr^{\check{\CF}}_\bullet D^*$
is naturally isomorphic to the dual of $\gr^\CF_\bullet D$.

\begin{prop}\label{prop:duality-critical}
\begin{enumerate}
\item \label{it:duality-critical-a}
$N_{\check{\CF}}$ is dual to $-N_\CF$.
\item \label{it:duality-critical-b}
$s$ is marked for $\CF$ if and only if $n+1-t_\CF(s)$ is marked for
$\check{\CF}$.
\item \label{it:duality-critical-c}
$s$ is strongly marked for $\CF$ if and only if $n+1-t_\CF(s)$ is
strongly marked for $\check{\CF}$.
\end{enumerate}
\end{prop}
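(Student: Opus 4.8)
The plan is to prove the three assertions in sequence, with (\ref{it:duality-critical-a}) doing most of the work. First I would set up the duality dictionary carefully: using the identification $\gr^{\check\CF}_i D^* \cong (\gr^\CF_{n+1-i}D)^\vee$ described just above the statement, and more generally the identification $L^\bot/M^\bot \cong (M/L)^\vee$ for $L\subset M$, one transports the definition of $N_{\check\CF}$ on $\gr^{\check\CF}_\bullet D^*$ into a statement about maps between the duals of the $\gr^\CF_i D$. The key computational input is that on all of $D^*$ the monodromy operator is (up to the standard sign) the transpose of $N_D$: concretely $\langle N_{D^*}y, x\rangle = -\langle y, N_D x\rangle$, which follows from differentiating the invariance of the pairing or directly from how $N$ acts on a dual basis. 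I would then unwind the definition of $N_{\check\CF}$ (the two-step recipe: pass to $\CF_i D^*/\CF_{i-1}D^*$, apply $N$, land in $(N\check\CF_{i-1}D^* + \check\CF_j D^*)/(N\check\CF_{i-1}D^*+\check\CF_{j-1}D^*)$, invert the canonical isomorphism) and check that under the pairing this matches the transpose of the corresponding recipe defining $N_\CF$. The bookkeeping with the ``minimal index $j$'' conditions is the main obstacle: one must verify that the minimal $j$ appearing in the definition of $N_{\check\CF}(\gr^{\check\CF}_i D^*)$ corresponds exactly, under $i \leftrightarrow n+1-i$, to the minimal index in the definition of $N_\CF$, and that the orthogonal-complement operations commute appropriately with the filtration steps $\CF_j D$. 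This is where I expect the real effort; once the index-matching is established, the sign and the transpose relation drop out formally.

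Given (\ref{it:duality-critical-a}), assertion (\ref{it:duality-critical-b}) is essentially immediate. Indeed, $s$ is marked for $\CF$ means precisely that $\gr^\CF_s D$ lies in the image of $N_\CF$, i.e. $N_\CF(\gr^\CF_{t}D)=\gr^\CF_s D$ with $t=t_\CF(s)$ (and $N_\CF(\gr^\CF_{t-1}D)$ not already containing it). Dualizing via (\ref{it:duality-critical-a}): $N_{\check\CF}$ restricted to $\gr^{\check\CF}_{n+1-s}D^*$ is (minus) the transpose of $N_\CF|_{\gr^\CF_s D}$ composed around, so the source and target indices for $N_{\check\CF}$ are obtained from those of $N_\CF$ by $i\mapsto n+1-i$. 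Hence $N_\CF:\gr^\CF_t D\twoheadrightarrow\gr^\CF_s D$ nonzero is equivalent to $N_{\check\CF}:\gr^{\check\CF}_{n+1-s}D^*\twoheadrightarrow\gr^{\check\CF}_{n+1-t}D^*$ nonzero, which says exactly that $n+1-t=n+1-t_\CF(s)$ is marked for $\check\CF$, with $t_{\check\CF}(n+1-t)=n+1-s$. I would also record here, for use in (\ref{it:duality-critical-c}), the compatibility of Hodge–Tate weights under duality: the weights $\vec k_i$ of $\gr^\CF_i D$ and the weights $\vec k'_i$ of $\gr^{\check\CF}_i D^*$ satisfy $\vec k'_i = -\vec k_{n+1-i}$ componentwise, since the filtration on $D^*$ is the dual filtration.

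For assertion (\ref{it:duality-critical-c}) I would show that a perfect $s$-decomposition of $\CF_t D/\CF_{s-1}D$ dualizes to a perfect $(n+1-t)$-decomposition of $\check\CF_{n+1-s}D^*/\check\CF_{n-t}D^*$. The orthogonality identifications give a canonical isomorphism $(\CF_t D/\CF_{s-1}D)^\vee \cong \check\CF_{n+1-s}D^*/\check\CF_{n-t}D^*$ respecting refinements (reversing the order) and $(\varphi,N)$-structure up to the sign in (\ref{it:duality-critical-a}). An $s$-decomposition $\CF_t D/\CF_{s-1}D = E\bar e_s\oplus L\oplus E\bar e_t$ then dualizes to $(E\bar e_t)^\vee \oplus L^\vee \oplus (E\bar e_s)^\vee$, i.e. an $(n+1-t)$-decomposition of the dual (the roles of the ``bottom'' and ``top'' lines swap, which matches $t_{\check\CF}(n+1-t)=n+1-s$). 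For perfectness one checks that the two filtrations appearing in the definition — the one on the sub-$(\varphi,N)$-module $E\bar e_s\oplus E\bar e_t$ and the one on the quotient by $L$ — are interchanged under dualization (the dual of a sub is a quotient of the dual and vice versa), and that the parameters transform as $k_{s,\tau}\rightsquigarrow -k_{t,\tau}$, $k_{t,\tau}\rightsquigarrow -k_{s,\tau}$, $k'_{s,\tau}\rightsquigarrow -k'_{t,\tau}$, $k'_{t,\tau}\rightsquigarrow -k'_{s,\tau}$, so that the chain of inequalities $k_{s,\tau}\le k'_{s,\tau}<k'_{t,\tau}\le k_{t,\tau}$ is preserved, while $\CL_{\mathrm{dec},\tau}$ is unchanged (or negated — the sign will be determined in the course of the computation, and by Proposition \ref{prop:fon-mazur-inv} it is well defined regardless of the choice of perfect decomposition). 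Since the construction is reversible, existence of a perfect $s$-decomposition for $\CF$ is equivalent to existence of a perfect $(n+1-t_\CF(s))$-decomposition for $\check\CF$, which is (\ref{it:duality-critical-c}). The one point requiring care is that both ``the filtration on the sub'' and ``the filtration on the quotient'' in the definition of perfectness are pieces of the filtration on a subquotient of $D$; I would verify that the orthogonal-complement identification carries the relevant subquotient of $K\otimes_{K_0}D$ to the relevant subquotient of $K\otimes_{K_0}D^*$ compatibly with these filtrations, which again reduces to the fact that $\BD_\st$ respects duals (Proposition \ref{prop:berger}(\ref{it:berger-d})) and the description of $\Fil^\bullet$ on a dual filtered module.
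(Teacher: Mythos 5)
Your outline is correct and follows the same route the paper takes: the paper's own proof simply defers to the analogous statements in \cite{Xie2015} (Propositions 4.13, 4.14, 4.15(a)), which are established by exactly this duality/orthogonal-complement argument — identifying $\gr^{\check{\CF}}_i D^*$ with $(\gr^\CF_{n+1-i}D)^\vee$, using $N_{D^*}=-N_D^{t}$, matching the minimal indices via the intersection criterion, and dualizing a perfect $s$-decomposition (with the sub/quotient filtrations swapping and the weights negating). The points you flag as needing care (the index bookkeeping in (a), the sign of $N(\bar e_s^\vee)$, and the normalization of $\CL_{\mathrm{dec},\tau}$ under duality) are indeed the only delicate spots, and they resolve as you anticipate.
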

\begin{proof} The proof of (\ref{it:duality-critical-a}) is similar to that of \cite[Proposition 4.14]{Xie2015}. The proof of
(\ref{it:duality-critical-b}) is similar to that of
\cite[Proposition 4.13]{Xie2015}. The proof of
(\ref{it:duality-critical-c}) is similar to that of
\cite[Proposition 4.15 (a)]{Xie2015}.
\end{proof}

\section{Projection vanishing property} \label{sec:proj-van-prop}

Put $S=E[Z]/(Z^2)$. Let $z$ be the closed point defined by the
maximal ideal $(Z)$ of $S$.

Let $W=(W_e, W^+_\dR)$ be an $S$-$B$-pair. Let $\{w_1, \cdots,
w_n\}$ be a $\BB_{e,S}$-basis of $W_e$. Suppose that $W$ admits a
triangulation $\Fil_\bullet$. Let $(\delta_{1}, \cdots, \delta_{n})$
be the corresponding triangulation parameters. Then for each
$i=1,\cdots, n$ there exists a continuous additive character
$\epsilon_i$ of $K^\times$ with values in $E$ such that $\delta_i =
\delta_{i,z} (1+Z \epsilon_i)$.

Suppose that $W_z$, the evaluation of $W$ at $z$, is semistable, and
let $D_z$ be the filtered $E$-$(\varphi,N)$-module attached to
$W_z$. Let $\CF$ be the refinement of $D_z$ corresponding to the
induced triangulation of $W_z$, and let $\{e_{1,z}, e_{2,z}, \cdots,
e_{n,z}\}$ be a $(K_0\otimes_{\BQ_p}E)$-basis of $D_z$ that is
compatible with $\CF$ i.e. $\CF_i D= (K_0\otimes_{\BQ_p}E)e_{1,z}
\oplus \cdots \oplus (K_0\otimes_{\BQ_p}E) e_{i,z}$. Let
$\alpha_{i,z}\in E$ be such that
$\varphi^{[K_0:\BQ_p]}(e_{i,z})=\alpha_{i,z}e_{i,z} \mod \CF_{i-1}$.

Let $x_{ij}\in \BB_{\log,E}$ ($i,j=1,\cdots, n$) be such that
\begin{equation}\label{eq:relation-e-and-v}
e_{i,z} = x_{1i} w_{1,z} +\cdots + x_{ni} w_{n,z} .   \end{equation}
Then $X=(x_{ij})$ is in $\GL_n(\BB_{\log,E})$. Write the matrix of
$\sigma\in G_K$ with respect to the basis $\{w_{1}, \cdots, w_{n}\}$
by $(I_n + Z U_{e,\sigma})A_{e,\sigma}$. As $e_{1, z},\cdots,
e_{n,z}$ are fixed by $G_{K}$, we have $X^{-1}A_{e,\sigma}
\sigma(X)=I_n$ for all $\sigma\in G_{K}$.

For $i=1,\cdots, n$ put $e_i = x_{1i} w_1 +\cdots + x_{ni}  w_n$.
Then $\{e_1,\cdots, e_n\}$ is a basis of $\BB_{\log,S}\otimes_S W_e$
over $\BB_{\log,S}$.

\begin{lem} \label{lem:trivial-useful} If $T$ is the matrix of $\varphi_{D_z}$ for the basis $\{e_{1,z}, \cdots,
e_{n,z}\}$, then $T$ is also the matrix of
$\varphi_{\BB_{\log,S}\otimes_S W_e}$ for the basis $\{e_{1},
\cdots, e_{n}\}$.
\end{lem}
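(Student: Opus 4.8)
The plan is to reduce everything to the single change-of-basis matrix $X$ over $\BB_{\log}$ together with the fact that Frobenius acts trivially on the $W$-part. The key point is that on $\BB_{\log,E}\otimes_{\BB_{e,E}}W_{z,e}$, the ambient module of $D_z=\BD_\st(W_z)$, the operator $\varphi$ is $\varphi_{\BB_\log}$-semilinear and fixes each $1\otimes w_{j,z}$, because $\varphi$ restricts to the identity on $\BB_{e,E}=\BB_\max^{\varphi=1}\otimes_{\BQ_p}E$; hence its matrix with respect to $\{w_{1,z},\dots,w_{n,z}\}$ is $I_n$. For the same reason the Frobenius $\varphi=\varphi_{\BB_{\log,S}}\otimes\mathrm{id}$ on $\BB_{\log,S}\otimes_S W_e$ fixes each $w_j$, so its matrix with respect to $\{w_1,\dots,w_n\}$ is also $I_n$.

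Next I would apply the change-of-basis rule for $\varphi_{\BB_\log}$-semilinear operators. Since $e_{i,z}=\sum_j x_{ji}w_{j,z}$ and $e_i=\sum_j x_{ji}w_j$ are defined with literally the same matrix $X=(x_{ij})\in\GL_n(\BB_{\log,E})$ (in the second case viewed inside $\GL_n(\BB_{\log,S})$ via the $\varphi$-compatible inclusion $\BB_{\log,E}\hookrightarrow\BB_{\log,S}$), the matrix of $\varphi$ on the ambient module of $D_z$ with respect to $\{e_{1,z},\dots,e_{n,z}\}$ and the matrix of $\varphi_{\BB_{\log,S}\otimes_S W_e}$ with respect to $\{e_1,\dots,e_n\}$ are both equal to $X^{-1}\varphi(X)$ (here $\varphi(X)$ means the entrywise application of $\varphi_{\BB_\log}$, extended $E$-linearly, which gives the same matrix in $\mathrm{M}_n(\BB_{\log,E})$ and in $\mathrm{M}_n(\BB_{\log,S})$). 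Since $D_z$ is a $\varphi$-stable subobject on which $\{e_{1,z},\dots,e_{n,z}\}$ is a $(K_0\otimes_{\BQ_p}E)$-basis, the matrix of $\varphi_{D_z}$ with respect to this basis is the same $X^{-1}\varphi(X)$; by hypothesis it equals $T$. Therefore the matrix of $\varphi_{\BB_{\log,S}\otimes_S W_e}$ with respect to $\{e_1,\dots,e_n\}$ is $T$ as well.

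As the name of the lemma suggests, there is no genuine obstacle; the only points requiring a little care are the semilinearity bookkeeping (the meaning of the matrix of a $\varphi\otimes 1$-semilinear operator and the entrywise interpretation of $\varphi(X)$) and the remark that the Frobenius of $\BB_{\log,S}\otimes_S W_e$ is precisely the one induced by $\varphi$ on $\BB_\log$, so that it is compatible with the Frobenius on $D_z$ coming from $\BD_\st$ as recalled in Section \ref{sec:B-pairs}; after that the statement is a one-line matrix identity.
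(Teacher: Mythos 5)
Your proof is correct and follows essentially the same route as the paper, which likewise reduces the claim to the two facts that $e_i$ and $e_{i,z}$ are expressed through the \emph{same} matrix $X$ and that the $w_j$ and $w_{j,z}$ are fixed by $\varphi$ (being elements of the $\varphi$-invariant modules $W_e$ and $W_{z,e}$). Your write-up merely spells out the semilinear change-of-basis computation $X^{-1}\varphi(X)$ that the paper leaves implicit.
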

\begin{proof} The assertion follows from the definition of $\{e_1, \cdots, e_n\}$ and the fact that $w_{1, z}, \cdots, w_{n,z}, w_1, \cdots,
w_n$ are fixed by $\varphi$.
\end{proof}

In Section \ref{sec:coh-B-pair} we attach to $W$ an element $c_B(W)$
in $H^1_B(W_z^*\otimes W_z)$. Consider the composition
$$ H^1_B(W_z^*\otimes W_z) \rightarrow H^1(G_K, W_{e,z}^*\otimes_{\BB_{e,E}} W_{e,z}) \rightarrow H^1(G_K,
\BB_{\log,E}\otimes_E (D_z^*\otimes D_z)) . $$ As the matrix of
$\sigma\in G_K$ for the basis $\{e_1,\cdots, e_n\}$ is $I_n + Z
X^{-1}U_{e,\sigma} X$, from the discussion in Section \ref{sec:coh}
we see that the image of $c_B$ in $H^1(G_K, \BB_{\log,E}\otimes_E
(D_z^*\otimes D_z))$ is the class of the $1$-cocycle
$$(U_{e,\sigma})_{ij} w^*_{j,z}\otimes w_{i,z}  =(X^{-1}U_{e,\sigma}X)_{ij} e^*_{j,z}\otimes e_{i,z} . $$
Let $\pi_{h \ell}$ be the projection
\begin{equation}\label{eq:proj}
\BB_{\log,E}\otimes_E (D_z^*\otimes D_z)\rightarrow \BB_{\log,E} , \
\ \ \sum_{j,i} b_{ji} e^*_{j,z}\otimes e_{i,z} \mapsto b_{h \ell} .
\end{equation}

For $h=1, \cdots, n$, let $\epsilon'_h$ be the additive character of
$G_K$ such that $\epsilon'_h \circ\mathrm{rec}_K(p)=0$ and
$\epsilon'_h\circ\mathrm{rec}_K|_{\fo_K^\times}=\epsilon_h|_{\fo_K^\times}$.

\begin{thm} \label{prop:middle-step}
\begin{enumerate}
\item\label{it:middle-step-a} For any pair of integers $(h,\ell)$ such that $h<\ell$ we have
$\pi_{h \ell}([c])=0$.
\item\label{it:middle-step-b} For any $h=1,\cdots, n$,
$\pi_{h,h}([c])$ coincides with the image of $[\epsilon'_h]$ in
$H^1(G_K, \BB_{\log,E})$.
\end{enumerate}
\end{thm}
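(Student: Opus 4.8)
The plan is to exploit the fact that the triangulation of $W$ at $z$ restricts to the triangulation of $W_z$ which corresponds to the refinement $\CF$, so that the infinitesimal deformation classified by $c$ preserves the flag $\CF_\bullet$; parts (a) and (b) then follow by reading off, respectively, the strictly–below–diagonal and the diagonal coordinates of a suitable representative of $[c]$. First I would pick a $\BB_{e,S}$-basis $\{\hat w_1,\dots,\hat w_n\}$ of $W_e$ adapted to the flag, i.e.\ with $\{\hat w_1,\dots,\hat w_i\}$ a $\BB_{e,S}$-basis of $(\Fil_iW)_e$ for every $i$; this is possible because each $(\Fil_iW)_e$ is free over $\BB_{e,S}$ and each graded piece $(\Fil_iW)_e/(\Fil_{i-1}W)_e\cong W(\delta_i)_e$ is free of rank one, so the relevant short exact sequences of free $\BB_{e,S}$-modules split. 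Because each $\Fil_iW$ is $G_K$-stable, the matrix of every $\sigma\in G_K$ in this basis is upper triangular, and hence so is the cocycle $\hat U_{e,\sigma}$ produced from $\{\hat w_j\}$ by the construction of Section \ref{ss:cocycle-inform}; in other words the image of $c_B(W)$ in $H^1(G_K,\BB_{\log,E}\otimes_E(D_z^*\otimes D_z))$ --- the class the text denotes (after extension of scalars and passing to the $\{e_{j,z}\}$-basis) by $\sigma\mapsto X^{-1}U_{e,\sigma}X$ --- is equally represented by the cocycle $\sigma\mapsto\hat U_{e,\sigma}$, taking values in the subspace $\mathfrak p\subset D_z^*\otimes D_z=\End_{K_0\otimes_{\BQ_p}E}(D_z)$ of endomorphisms preserving $\CF_\bullet D_z$. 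Since $\{e_{1,z},\dots,e_{n,z}\}$ is adapted to $\CF_\bullet D_z$, in this basis $\mathfrak p$ is exactly the space of upper-triangular matrices.

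For (a): by the definition of $\pi_{h\ell}$, for $h<\ell$ the functional $\pi_{h\ell}$ reads off the $(\ell,h)$-entry, with respect to $\{e_{j,z}\}$, of the endomorphism-valued cocycle, and this entry is strictly below the diagonal; since $[c]$ admits a $\mathfrak p$-valued, i.e.\ upper-triangular, representative, that entry is identically zero, so $\pi_{h\ell}([c])=0$.

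For (b): $\pi_{hh}$ restricted to $\mathfrak p$ annihilates the strictly-upper-triangular radical $\mathfrak n$ and factors as the $h$-th component of the associated-graded map $\mathfrak p\to\bigoplus_k\End(\gr^{\CF}_kD_z)$; hence $\pi_{hh}([c])$ is the image in $H^1(G_K,\BB_{\log,E})$ of the $W_e$-component of the cocycle $c_B$ attached to the rank-one deformation $\Fil_hW/\Fil_{h-1}W\cong W(\delta_h)$ of $W(\delta_{h,z})$, where $\delta_h=\delta_{h,z}(1+Z\epsilon_h)$. Using the decomposition recorded in the proof of Lemma \ref{lem:varph-gamma-fil}, $W(\delta_h)=W\bigl((\delta_{h,z})_S\bigr)\otimes W_a\otimes W(1+Z\epsilon'_h)$ with $a=1+Zv_p(\pi_K)\epsilon_h(p)$, together with the fact that the $W_e$-cocycle attached to a tensor product of rank-one $B$-pairs is the sum of those of the factors, I would compute the three contributions: the constant factor $W((\delta_{h,z})_S)$ contributes $0$; the factor $W(1+Z\epsilon'_h)$, being $W$ of the Galois character $1+Z\epsilon'_h$, contributes the cocycle $\sigma\mapsto\epsilon'_h(\sigma)\in E\subset\BB_{\log,E}$, i.e.\ the image of $[\epsilon'_h]$; and the crystalline factor $W_a$ contributes a coboundary, because by Lemma \ref{lem:kisin} its $W_e$ is spanned by the unit $u_0=1+Zc_1v_p(\pi_K)\epsilon_h(p)$ with $c_1\in\BQ_p^{\mathrm{ur}}$ and $(\varphi^{[K_0:\BQ_p]}-1)c_1=1$, so the associated cocycle is $\sigma\mapsto(\sigma-1)\bigl(c_1v_p(\pi_K)\epsilon_h(p)\bigr)$ with $c_1v_p(\pi_K)\epsilon_h(p)\in\BB_{\max,E}\subset\BB_{\log,E}$. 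Summing gives $\pi_{hh}([c])=[\epsilon'_h]$ in $H^1(G_K,\BB_{\log,E})$.

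I expect the conventions to need the most care: checking that ``the deformation preserves the flag'' really yields an upper-triangular (hence $\mathfrak p$-valued) cocycle in the $\{e_{j,z}\}$-basis and that $\pi_{h\ell}$ picks out precisely the matrix entry claimed above, together with the explicit verification in (b) that $W_a$ contributes a coboundary, which rests on unwinding the construction of $u_0$ in Lemma \ref{lem:kisin} and the identification $\Fil_hW/\Fil_{h-1}W\cong W(\delta_h)$. Once these are in place, both parts are formal consequences of the representative of $[c]$ being $\mathfrak p$-valued.
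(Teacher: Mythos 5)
Your proof is correct, but it reaches the two statements by a different route than the paper. The paper proves both parts with a single computation: for each $h$ it invokes Lemma \ref{lem:varph-gamma-fil} to produce a simultaneous $G_K$- and $\varphi^{[K_0:\BQ_p]}$-eigenvector $x=e_h+Z\sum_{\ell\ge h}\lambda_\ell e_\ell$ in $\BB_{\max,E}\otimes_{\BB_{e,E}}(W/\Fil_{h-1}W)_e$ lifting $e_{h,z}$, on which $G_K$ acts through $1+Z\epsilon'_h$; comparing coefficients in $\sigma(x)=(1+Z\epsilon'_h(\sigma))x$ yields $(X^{-1}U_{e,\sigma}X)_{\ell h}=(1-\sigma)\lambda_\ell$ for $\ell>h$ (part (a)) and $(X^{-1}U_{e,\sigma}X)_{hh}-\epsilon'_h(\sigma)=(1-\sigma)\lambda_h$ (part (b)). Your treatment of (a) is more structural: you pass to a basis adapted to the triangulation so that the representative of $[c]$ is visibly valued in the parabolic. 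This is legitimate, since $c_B(W)$ is the class of the basis-independent extension $\mathcal M$ of Section \ref{ss:cocycle-inform}, and since the change-of-basis matrix to $\{e_{j,z}\}$ is itself upper triangular (because $\CF_iD_z=\BD_\st(\Fil_iW_z)\subset\BB_{\log,E}\otimes_{\BB_{e,E}}(\Fil_iW_z)_e$). For (b) you reduce to the graded piece and expand the tensor decomposition $W(\delta_h)=W((\delta_{h,z})_S)\otimes W_a\otimes W(1+Z\epsilon'_h)$ factor by factor; this is exactly what Lemma \ref{lem:varph-gamma-fil} packages into the eigenvector $x$, so the diagonal computation is essentially the paper's in disguise. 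Your route buys a conceptual explanation of the projection vanishing property (it holds because the deformation preserves the triangulation); the paper's buys the explicit coboundary term $\lambda_h$ together with its $\varphi$-eigenproperty, which is reused verbatim in the proof of Lemma \ref{lem:constant-gamma}, so if you adopt your version you would still need to re-extract such a $\lambda_h$ there. Two minor points to tighten: the $\BB_{e,S}$-generator of $W_{a,e}$ is not $u_0$ itself but the vector $\sum_\tau c_\tau e_\tau$ with $c_{\mathrm{id}}\in(\BB_{e,K_0}\widehat{\otimes}_{K_0}S)u_0$ constructed in Proposition \ref{prop:S-B-pair} (its Galois cocycle is nonetheless $(\sigma-1)$ of an element of $\BB_{\max,E}$, hence a coboundary in $H^1(G_K,\BB_{\log,E})$, as you need); and the cocycle takes values in $\BB_{e,E}\otimes\mathfrak{p}$ rather than in $\mathfrak{p}$.
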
 We call (\ref{it:middle-step-a}) the {\it projection
vanishing property}.
\begin{proof} The filtered $E$-$(\varphi,N)$-module attached to $W_z/
\Fil_{h-1}W_z$ is $D_z/ \CF_{h-1}D_z$. We denote the image of
$e_{\ell,z}$ ($\ell\geq h$) in $D_z/\CF_{h-1}D_z$ again by
$e_{\ell,z}$.

Let $\delta'_h$ be the character of $G_K$ such that
$\delta'_h=1+Z\epsilon'_h$. By Lemma \ref{lem:varph-gamma-fil} there
exists an element
$$x\in (\BB_{\mathrm{max},E}\otimes_{ \BB_{e,E} } (W/
\Fil_{h-1}W)_e)^{G_K=\delta'_h,
\varphi^{[K_0:\BQ_p]}=\alpha_{i,z}(1+Zv_p(\pi_K)\epsilon_h(p))}$$
whose image in $D_z/\CF_{h-1} D_z$ is $e_{h,z}$.  Write $x= e_{h}+
Z\sum\limits_{\ell\geq h} \lambda_\ell e_{\ell}$ with
$\lambda_\ell\in \BB_{\log,E}$.

As the matrix of $\sigma\in G_K$ for the basis $\{e_{1}, \cdots,
e_n\}$ is $I_n+ZX^{-1}U_{e,\sigma}X$, we have \begin{eqnarray*} &&
[1+Z\epsilon'_h(\sigma)]x =[1+Z\epsilon'_h(\sigma)]( e_{h}+ Z\sum_{\ell\geq h} \lambda_\ell e_{\ell}) \\
& = &\sigma(x)=   e_{h} + Z\sum_{\ell\geq h}
(X^{-1}U_{e,\sigma}X)_{\ell h}e_{\ell} + Z\sum_{\ell\geq
h}\sigma(\lambda_\ell)e_{\ell} .
\end{eqnarray*} For $\ell>h$,
comparing the coefficients of $e_{\ell}$ we obtain
$$(X^{-1}U_{e,\sigma}X)_{\ell h}= (1-\sigma)\lambda_\ell, $$ which shows
(\ref{it:middle-step-a}). Similarly, comparing coefficients of
$e_{h}$ we obtain
\begin{equation} \label{eq:Uhh-episilon}
(X^{-1}U_{e,\sigma}X)_{hh}-\epsilon'_h(\sigma) = (1-\sigma)\lambda_h
,\end{equation} which implies (\ref{it:middle-step-b}).
\end{proof}

\section{The proof of Theorem \ref{thm:main}} \label{sec:main}

We will need the following lemmas.

\begin{lem}\label{cor:inc-iso} The inclusion $ E \hookrightarrow \BB_{e,E} $ induces an
isomorphism
$$ H^1(G_K,E) \xrightarrow{\sim} \ker(N: H^1(G_K, \BB_{e,E})\rightarrow H^1(G_K, \BB_{\log, E})).
$$
\end{lem}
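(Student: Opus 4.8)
The plan is to identify both sides with concrete cocycle spaces and to use the fundamental exact sequence $0\to\BQ_p\to\RB_e\to\RB_\dR/\RB^+_\dR\to 0$ together with the monodromy operator $N$ on $\BB_{\log,E}$. First I would observe that $\BB_{e,E}=\BB_\max^{\varphi=1}\otimes_{\BQ_p}E$ sits inside $\BB_{\log,E}$, and the operator $N$ on $\BB_{\log,E}$ satisfies $\ker(N)=\BB_{\max,E}$ with $N$ surjective (this is the defining property of $\BB_\log=\BB_\max[\log[\tilde p]]$, $\BB_\log^{N=0}=\BB_\max$, recalled in Section~\ref{sec:B-pairs}). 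So there is a short exact sequence of $G_K$-modules
\[
0\to \BB_{\max,E}\to \BB_{\log,E}\xrightarrow{N}\BB_{\log,E}\to 0,
\]
and $\BB_{e,E}\subset\BB_{\max,E}$ with quotient $\BB_{\max,E}/\BB_{e,E}\cong (\BB_\dR/\BB^+_\dR)\otimes_{\BQ_p}E$ via $\theta$-type maps. I would also use that $E\hookrightarrow\BB_{e,E}$ has cokernel $(\RB_\dR/\RB^+_\dR)\otimes_{\BQ_p}E$ by the fundamental exact sequence tensored with $E$.

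The key steps, in order: (1) take $G_K$-cohomology of the fundamental exact sequence $0\to E\to\BB_{e,E}\to (\BB_\dR/\BB^+_\dR)\otimes E\to 0$ to get a long exact sequence; the map $H^1(G_K,E)\to H^1(G_K,\BB_{e,E})$ is injective because $H^0(G_K,\BB_{e,E})=E$ (so $H^0(G_K,(\BB_\dR/\BB^+_\dR)\otimes E)/E$ — actually one needs $H^0(G_K,(\BB_\dR/\BB^+_\dR)\otimes E)$, which by the fundamental exact sequence equals $H^1(G_K,E)=\Hom(G_K,E)$'s worth, but the relevant fact is that $H^0(G_K,\BB_{e,E})\to H^0(G_K,(\BB_\dR/\BB^+_\dR)\otimes E)$ is surjective since $H^1(G_K,E)$ maps... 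I should instead argue injectivity directly: an element of $H^1(G_K,E)$ mapping to $0$ in $H^1(G_K,\BB_{e,E})$ comes from $H^0(G_K,(\BB_\dR/\BB^+_\dR)\otimes E)$, and one checks this connecting map is zero, e.g. because the extension classes it produces are de Rham — cf. Lemma~\ref{lem:de-Rham-cocycle} and \cite[Lemma 4.3]{Naka}). (2) Show the image of $H^1(G_K,E)$ lands in $\ker N$: since $E\subset\BB_{e,E}\subset\BB_{\max,E}=\ker(N:\BB_{\log,E}\to\BB_{\log,E})$, the composite $H^1(G_K,E)\to H^1(G_K,\BB_{\log,E})\xrightarrow{N}H^1(G_K,\BB_{\log,E})$ is visibly zero. (3) For surjectivity onto $\ker N$: take the long exact sequence of $0\to\BB_{e,E}\to\BB_{\max,E}\to(\BB_\dR/\BB^+_\dR)\otimes E\to 0$ and of $0\to\BB_{\max,E}\to\BB_{\log,E}\xrightarrow{N}\BB_{\log,E}\to 0$; chasing these, an element of $H^1(G_K,\BB_{e,E})$ killed by $N$ (after pushing to $\BB_{\log,E}$) comes, modulo the image of $H^1(G_K,E)$, from $H^1(G_K,(\BB_\dR/\BB^+_\dR)\otimes E)$, and one shows that part must vanish. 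Here I would invoke that $H^1(G_K,\BB_{\dR}/\BB^+_\dR)$ computations (Nakamura, \cite{Naka}) show the de Rham / non-de-Rham dichotomy, identifying $\ker N$ precisely with the classes whose associated extension of the trivial $B$-pair is de Rham, which by \cite[Lemma 4.3]{Naka} is exactly $H^1(G_K,E)$.

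The main obstacle is step (3), the surjectivity: one has to control $H^1(G_K,(\BB_\dR/\BB^+_\dR)\otimes E)$ and the interaction of the two exact sequences (the $N$-sequence and the $\theta$-sequence) simultaneously. I expect the cleanest route is to reinterpret everything in terms of $B$-pairs: the class in $\ker N\subset H^1(G_K,\BB_{e,E})$ corresponds to an extension $\tilde W_e$ of $\BB_{e,E}$ by $\BB_{e,E}$ whose associated de Rham module, after inverting $t$ and applying $N$, is trivial — i.e. the extension is semistable with $N=0$, hence crystalline, hence (by Berger's equivalence, Proposition~\ref{prop:berger}) the $B$-pair extension is already defined over $E$ as a Galois representation, which forces the class to lie in $H^1(G_K,E)$. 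Dimension-counting (both kernels being free $E$-modules of the same rank, or using that the cokernel of $E\hookrightarrow\BB_{e,E}$ contributes nothing to $H^0$ and a controlled amount to $H^1$) then closes the argument. I would write this out via the snake lemma on the diagram relating the fundamental exact sequence and the $N$-exact sequence.
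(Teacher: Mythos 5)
The paper offers no argument of its own here---it defers entirely to \cite[Corollary 1.4]{Xie2015}---so the benchmark is the standard argument: tensor the fundamental exact sequence $0\to E\to \BB_{e,E}\to (\BB_\dR/\BB^+_\dR)\otimes_{\BQ_p}E\to 0$ with nothing more than Tate's vanishing $H^i(G_K,\BC_p(j))=0$ for $i=0,1$ and $j\neq 0$, which gives $H^0(G_K,(\BB_\dR/\BB^+_\dR)\otimes E)=H^1(G_K,(\BB_\dR/\BB^+_\dR)\otimes E)=0$ and hence both injectivity and surjectivity in one stroke. Your proposal circles this but never lands on it. The decisive fact, $H^1(G_K,(\BB_\dR/\BB^+_\dR)\otimes E)=0$, is exactly the ``part must vanish'' that you flag as the main obstacle in step (3) and then replace with a ``de Rham / non-de Rham dichotomy.'' That replacement is wrong: by the paper's own Lemma \ref{lem:de-Rham-cocycle}, the de Rham classes in $H^1(G_K,E)$ form only the line $E\psi_0$, while $H^1(G_K,E)$ has dimension $[K:\BQ_p]+1$ and contains the non--de Rham classes $\psi_\tau$; so ``class is crystalline/de Rham $\Leftrightarrow$ class lies in $H^1(G_K,E)$'' fails, and your fallback $B$-pair argument would identify the kernel with a space that is too small.

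There is also a concrete false statement underpinning step (3): the sequence $0\to\BB_{e,E}\to\BB_{\max,E}\to(\BB_\dR/\BB^+_\dR)\otimes E\to 0$ is not exact. Here $\BB_e=\BB_\max^{\varphi=1}$ is cut out by a Frobenius condition, not by a $\theta$-type map, and $\BB_\max/\BB_\max^{\varphi=1}$ is nothing like $\BB_\dR/\BB^+_\dR$; the fundamental exact sequence has $\BQ_p$, not $\BB_\max$, as its kernel term. Since your surjectivity argument is a diagram chase through the long exact sequence of this nonexistent short exact sequence, the main step collapses. Finally, note that your own observation in step (2)---that $\BB_{e,E}\subset\BB_{\max,E}=\ker(N\colon\BB_{\log,E}\to\BB_{\log,E})$---applies to every class, not just those coming from $E$: read literally, $N$ is identically zero on $H^1(G_K,\BB_{e,E})$, so the lemma is asserting $H^1(G_K,E)\xrightarrow{\sim}H^1(G_K,\BB_{e,E})$ outright, and the whole apparatus of interacting the $N$-sequence with the $\theta$-sequence is beside the point; what is needed is the Tate vanishing above, full stop.
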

\begin{proof} The proof is identical to that of \cite[Corollary
1.4]{Xie2015}.
\end{proof}

\begin{lem} \label{lem:N-surj} The map
$N: \BB_{\log,E}^{\varphi=p}\rightarrow \BB_{\log, E}^{\varphi=1}$
is surjective.
\end{lem}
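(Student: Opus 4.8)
The plan is to reduce the surjectivity of $N\colon \BB_{\log,E}^{\varphi=p}\to \BB_{\log,E}^{\varphi=1}$ to a statement about $\BB_{\log}$ over $\BQ_p$, since $\BB_{\log,E}=\BB_{\log}\otimes_{\BQ_p}E$ and both $\varphi$ and $N$ act $E$-linearly; a $\BQ_p$-basis of $E$ then lets us pass from the surjectivity of $N\colon \BB_{\log}^{\varphi=p}\to\BB_{\log}^{\varphi=1}$ to the $E$-coefficient version. So the real content is: given $y\in\BB_{\log}^{\varphi=1}=\BB_e$, find $x\in\BB_{\log}$ with $\varphi(x)=px$ and $N(x)=y$. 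First I would recall the structure of $\BB_{\log}=\BB_{\max}[\log[\tilde p]]$ with $N(\log[\tilde p])=-1$ (up to normalization), $N=0$ on $\BB_{\max}$, and $\varphi(\log[\tilde p])=p\log[\tilde p]$, so that $\varphi$ and $N$ on $\BB_{\log}$ are completely determined by their action on the polynomial variable $\log[\tilde p]$ over $\BB_{\max}$.

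The key step is an explicit antiderivative. Writing $\ell=\log[\tilde p]$, an element of $\BB_{\log}$ is a polynomial $\sum_{i\ge 0} b_i\ell^i$ with $b_i\in\BB_{\max}$, and $N(\sum b_i\ell^i)=-\sum i\, b_i\ell^{i-1}$. Given $y\in\BB_e=\BB_{\max}^{\varphi=1}$, the naive antiderivative is $x=-y\ell$, which satisfies $N(x)=y$; but I must also arrange $\varphi(x)=px$. Since $\varphi(y)=y$ and $\varphi(\ell)=p\ell$, one computes $\varphi(-y\ell)=-y\cdot p\ell=p(-y\ell)$, so in fact $x=-y\ell$ already lies in $\BB_{\log}^{\varphi=p}$ and maps to $y$ under $N$. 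Thus the map is surjective, essentially on the nose, once the normalization of $N$ on $\ell$ is pinned down (if $N(\ell)=-1$ we take $x=-y\ell$; if $N(\ell)=1$ we take $x=y\ell$). I would spell out the normalization used in the paper's conventions to fix the sign, but this is routine.

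The only genuine subtlety — and the step I expect to need the most care — is making sure the target is correctly identified: one must check that $\BB_{\log}^{\varphi=1}$ really is $\BB_{\max}^{\varphi=1}=\BB_e$ and not something larger coming from the $\ell$-variable. This holds because if $\sum_{i\ge 0} b_i\ell^i$ is $\varphi$-fixed, comparing coefficients of $\ell^i$ gives $\varphi(b_i)=p^{-i}b_i$, and for $i\ge 1$ the space $\BB_{\max}^{\varphi=p^{-i}}$ is zero (the slopes of $\varphi$ on $\BB_{\max}$ are $\ge 0$, or more concretely $t_{\cyc}^{-i}\BB_{\max}^+$ considerations force vanishing), so only the $i=0$ term survives. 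Granting that, together with the elementary antiderivative above and the flat base change to $E$-coefficients, the surjectivity follows. I would organize the write-up as: (1) reduce to $E=\BQ_p$ by $E$-linearity; (2) identify $\BB_{\log}^{\varphi=1}=\BB_e$ via coefficient comparison in $\ell$; (3) exhibit $x=\pm y\ell$ as the explicit preimage, checking $\varphi(x)=px$ and $N(x)=y$.
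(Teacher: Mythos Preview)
Your proposal is correct and is the standard explicit argument: the paper itself gives no details, simply citing \cite[Lemma~1.2]{Xie2015}, and what you have written is exactly the expected proof there --- reduce to $\BQ_p$-coefficients by $E$-linearity, identify $\BB_{\log}^{\varphi=1}=\BB_e$ via the coefficient comparison in $\ell=\log[\tilde p]$ (using $\BB_{\max}^{\varphi=p^{-i}}=0$ for $i\geq 1$), and then exhibit $x=\pm y\,\ell$ as an explicit preimage of $y\in\BB_e$, checking $\varphi(x)=px$ and $N(x)=y$ from $\varphi(\ell)=p\ell$ and $N(\ell)=\pm 1$.
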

\begin{proof} The proof is identical to that of \cite[Lemma 1.2]{Xie2015}.
\end{proof}

For the proof of Theorem \ref{thm:main} we may assume that
$S=E[Z]/(Z^2)$, and $z$ is the closed point defined by the maximal
ideal $(Z)$. Let $W$ be as in Theorem \ref{thm:main}. Replacing $W$
by the $E$-$B$-pair $\CF_tW/\CF_{s-1}W$ and replacing $\CF$ by the
induced refinement on $\CF_tW/\CF_{s-1}W$, we may assume that $s=1$
and $t=n=\mathrm{rank}_{\BB_{e,E}}(W_e)$. Let $e_{1,z}, e_{2,z},
\cdots, e_{n,z}$ be a $K_0\otimes_{\BQ_p}E$-basis of $D_z$ such that
\begin{equation} \label{eq:s-decomp}
(K_0\otimes_{\BQ_p}E)e_{1,z} \bigoplus  L \bigoplus
(K_0\otimes_{\BQ_p}E) e_{n,z}
\end{equation} with $L=\oplus_{i=2}^{n-1}(K_0\otimes_{\BQ_p}E)e_{i,z}$
a perfect $1$-decomposition of $D_z$ for $\CF$ (see \S
\ref{ss:strong-crit-L-inv} for the meaning of perfect
decompositions). Let $e_{1,z}^*, e_{2,z}^*, \cdots, e^*_{n,z}$ be
the dual basis of $D^*_z$ over $K_0\otimes_{\BQ_p}E$.

Let $D_1$ be the quotient of $D_z$ by $L$, $D_2^*$ the quotient of
$D_z^*$ by $\oplus_{i=2}^{n-1}(K_0\otimes_{\BQ_p}E)e^*_{i,z}$. Put
$\mathscr{D}=D_2^*\otimes D_1$. The images of $e_{1,z}$ and
$e_{n,z}$ in $D_1$ are again denoted by $e_{1,z}$ and $e_{n,z}$, and
the images of $e^*_{1,z}$ and $e^*_{n,z}$ in $D_2^*$ are again
denoted by $e^*_{1,z}$ and $e^*_{n,z}$ respectively. So
$e^*_{1,z}\otimes e_{1,z}$, $e^*_{1,z}\otimes e_{n,z}$,
$e^*_{n,z}\otimes e_{1,z}$, $e^*_{n,z}\otimes e_{n,z}$ form a
$K_0\otimes_{\BQ_p}E$-basis of $\mathscr{D}$. Let $\mathscr{D}_0$ be
the filtered $E$-$(\varphi,N)$-submodule of $\mathscr{D}$ with a
$K_0\otimes_{\BQ_p}E$-basis $\{e^*_{1,z}\otimes e_{1,z}, \
e^*_{n,z}\otimes e_{1,z}, \ e^*_{n,z}\otimes e_{n,z}\}$. Let
$\mathscr{W}=(\mathscr{W}_e,\mathscr{W}^+_\dR)$ (resp.
$\mathscr{W}_0$) be the $E$-$B$-pair attached to $\mathscr{D}$
(resp. $\mathscr{D}_0$). Note that
$$\varphi^{[K_0:\BQ_p]}(e^*_{1,z}\otimes e_{1,z})=e^*_{1,z}\otimes e_{1,z}, \
\varphi^{[K_0:\BQ_p]}(e^*_{n,z}\otimes e_{n,z})=e^*_{n,z}\otimes
e_{n,z},
$$
$$ \varphi^{[K_0:\BQ_p]}(e^*_{n,z}\otimes e_{1,z})= p^{-[K_0:\BQ_p]}
e^*_{n,z}\otimes e_{1,z},
$$ and
$$ -N(e^*_{1,z}\otimes e_{1,z}) = N(e^*_{n,z}\otimes e_{n,z}) = e^*_{n,z}\otimes e_{1,z}, \ N(e^*_{n,z}\otimes e_{1,z}) = 0 $$

Let $\vec{\CL}_{\CF}=\vec{\CL}_{\CF,s,t}$ be the $\CL$-invariant
defined in Definition \ref{defn:Fontaine-Mazur}. As
(\ref{eq:s-decomp}) is a prefect decomposition, we have
\begin{eqnarray*}
\Fil^0 (K\otimes_{K_0}\mathscr{D})  = && E e^*_{n,z} \otimes
(e_{n,z} +\vec{\CL}_{\CF}e_{1,z}) \oplus E
(e^*_{1,z}-\vec{\CL}_{\CF}e^*_{n,z})\otimes e_{1,z} \\ && \oplus E
(e^*_{1,z}-\vec{\CL}_{\CF}e^*_{n,z})\otimes (e_{n,z}
+\vec{\CL}_{\CF}e_{1,z}). \end{eqnarray*} and \[ \Fil^0
(K\otimes_{K_0}\mathscr{D}_0) = E e^*_{n,z} \otimes (e_{n,z}
+\vec{\CL}_{\CF}e_{1,z}) \oplus E
(e^*_{1,z}-\vec{\CL}_{\CF}e^*_{n,z})\otimes e_{1,z} .
\]

Consider $W$ as an infinitesimal deformation of $W_z$. In Section
\ref{ss:cocycle-inform} we attach to this infinitesimal deformation
an element $c_B(W)$ in $H^1_B(W_z^*\otimes W_z)$. Let $[c]$ be the
image of $c_B(W)$ by the composition
$$ H^1_B(W_z^*\otimes W_z)\rightarrow H^1(G_K, W_{e,z}^*\otimes_{\BB_{e,E}}W_{e,z})\rightarrow H^1(G_K,
\BB_{\log,E}\otimes_{K_0\otimes_{\BQ_p}E} (D_z^*\otimes D_z)), $$
and choose a $1$-cocyle $c$ representing $[c]$. Write $c$ in the
form
$$ c =  \sum_{j,i} c_{j,i} e^*_{j,z}\otimes e_{i,z}
$$ with $c_{i,j}$ being a $1$-cocycle of $G_K$ with values in $\BB_{\log,E}$. By the projection vanishing property (Theorem \ref{prop:middle-step}
(\ref{it:middle-step-a})) we have $[c_{1,n}]=0$.

\begin{lem}\label{lem:constants} There exist $\xi_1, \xi_n \in \BB_{e,E}$ and $\gamma_{1,0},
\gamma_{1,\tau}, \gamma_{n,0}, \gamma_{n,\tau}$ $(\tau\in
\mathrm{Emb}(K,E))$ such that
$$ c_{1,1}(\sigma) =(\sigma-1)\xi_1 +  \gamma_{1,0} \psi_0(\sigma) +\sum_{\tau\in \mathrm{Emb}(K,E)} \gamma_{1,\tau} \psi_\tau (\sigma)  $$
and
$$ c_{n,n}(\sigma) =(\sigma-1)\xi_n +  \gamma_{n,0} \psi_0(\sigma) +\sum_{\tau\in \mathrm{Emb}(K,E)} \gamma_{n,\tau} \psi_\tau (\sigma)
$$ for any $\sigma\in G_K$. \end{lem}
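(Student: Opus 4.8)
\textbf{Proof plan for Lemma \ref{lem:constants}.}

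The plan is to analyze the two diagonal cocycles $c_{1,1}$ and $c_{n,n}$ by combining the projection vanishing property (Theorem \ref{prop:middle-step}) with the description of $H^1(G_K, E)$ and of de Rham extensions in Section \ref{sec:aux-for}. By symmetry, it suffices to treat $c_{n,n}$; the argument for $c_{1,1}$ is identical after replacing the quotient $W_z/\Fil_{n-1}W_z$ by the sub-object $\Fil_1 W_z$ (equivalently, passing to duals and using Proposition \ref{prop:duality-critical}). First I would invoke Theorem \ref{prop:middle-step}(\ref{it:middle-step-b}): the class $\pi_{n,n}([c])$ equals the image of $[\epsilon'_n]$ in $H^1(G_K, \BB_{\log,E})$, where $\epsilon'_n$ is the additive character of $G_K$ attached to $\epsilon_n$ (the infinitesimal direction of the triangulation parameter $\delta_n$) by the recipe $\epsilon'_n\circ\mathrm{rec}_K(p)=0$, $\epsilon'_n\circ\mathrm{rec}_K|_{\fo_K^\times}=\epsilon_n|_{\fo_K^\times}$. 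In particular $\pi_{n,n}([c])$ lies in the image of $H^1(G_K,E)\to H^1(G_K,\BB_{\log,E})$, hence also of $H^1(G_K,E)\to H^1(G_K,\BB_{e,E})$; note $[\epsilon'_n]$ is already $E$-valued, since $\epsilon'_n$ kills a Frobenius lift.

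Next I would show that $c_{n,n}$, as a $1$-cocycle with values in $\BB_{e,E}$, differs from an honest $E$-valued cocycle by a coboundary $(\sigma-1)\xi_n$ with $\xi_n\in\BB_{e,E}$. Here the point is that $c_{n,n}$ is (up to coboundary) $\varphi$-invariant in the appropriate sense: tracing through the identification in Section \ref{sec:coh-B-pair}, the cocycle $c$ with values in $\BB_{\log,E}\otimes_{K_0\otimes_{\BQ_p}E}(D_z^*\otimes D_z)$ comes from a cocycle valued in $W_{e,z}^*\otimes_{\BB_{e,E}}W_{e,z}$, so each entry $c_{j,i}$ in fact takes values in $\BB_{e,E}$ once we use Lemma \ref{lem:trivial-useful} to fix the $\varphi$-structure; moreover the monodromy operator $N$ annihilates the class of $c_{n,n}$ in $H^1(G_K,\BB_{\log,E})$ because $[\epsilon'_n]$ is $N$-invariant (it comes from $E$). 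Applying Lemma \ref{cor:inc-iso} — the inclusion $E\hookrightarrow\BB_{e,E}$ induces an isomorphism onto $\ker\big(N\colon H^1(G_K,\BB_{e,E})\to H^1(G_K,\BB_{\log,E})\big)$ — I conclude that $[c_{n,n}]\in H^1(G_K,\BB_{e,E})$ lies in the image of $H^1(G_K,E)$. Concretely this produces $\xi_n\in\BB_{e,E}$ and an $E$-valued cocycle $\psi$ with $c_{n,n}(\sigma)=(\sigma-1)\xi_n+\psi(\sigma)$. Finally, since $\{\psi_0\}\cup\{\psi_\tau:\tau\in\mathrm{Emb}(K,E)\}$ is an $E$-basis of $H^1(G_K,E)\cong\Hom(G_K,E)$, we may write $\psi=\gamma_{n,0}\psi_0+\sum_\tau\gamma_{n,\tau}\psi_\tau$ for suitable constants in $E$, which is the asserted form.

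The same reasoning applied to $c_{1,1}$ uses that the filtered $(\varphi,N)$-module attached to $\Fil_1 W_z$ is $\CF_1 D_z$, of rank one, and that by Lemma \ref{lem:varph-gamma-fil} there is an eigenvector in $\BB_{\max,E}\otimes_{\BB_{e,E}}(W_z)_e$ lifting $e_{1,z}$ with Galois action $1+Z\epsilon'_1$; chasing coefficients exactly as in the proof of Theorem \ref{prop:middle-step}(\ref{it:middle-step-b}) yields $\pi_{1,1}([c])$ equal to the image of $[\epsilon'_1]$, and the rest is as above, producing $\xi_1$, $\gamma_{1,0}$, $\gamma_{1,\tau}$. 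The main obstacle I anticipate is the bookkeeping needed to justify that the diagonal entries $c_{1,1}$ and $c_{n,n}$ genuinely take values in $\BB_{e,E}$ (not merely $\BB_{\log,E}$) after the change of basis $X$: one must check that the matrix conjugation $U_{e,\sigma}\mapsto X^{-1}U_{e,\sigma}X$ does not destroy $\BB_{e,E}$-integrality of the diagonal, which follows because $e_{i,z}$ and $w_{j,z}$ are all $\varphi$-fixed so that $X$ intertwines the correct $\varphi$-structures (Lemma \ref{lem:trivial-useful}) — but verifying this cleanly, together with the $N$-invariance claim that feeds Lemma \ref{cor:inc-iso}, is where the real care is required.
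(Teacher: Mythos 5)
Your route diverges from the paper's and has a genuine gap at the crucial point, namely the claim that $\xi_h$ can be taken in $\BB_{e,E}$ rather than merely in $\BB_{\log,E}$. You start from Theorem \ref{prop:middle-step}(\ref{it:middle-step-b}), which identifies the image of $[c_{h,h}]$ in $H^1(G_K,\BB_{\log,E})$ with that of $[\epsilon'_h]$; this gives $c_{h,h}(\sigma)=(\sigma-1)\lambda_h+\epsilon'_h(\sigma)$ with $\lambda_h\in\BB_{\log,E}$ only. Your step from ``$\pi_{h,h}([c])$ lies in the image of $H^1(G_K,E)\to H^1(G_K,\BB_{\log,E})$'' to ``hence also of $H^1(G_K,E)\to H^1(G_K,\BB_{e,E})$'' would require injectivity of $H^1(G_K,\BB_{e,E})\to H^1(G_K,\BB_{\log,E})$, which you neither prove nor have available. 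Your attempted repair via Lemma \ref{cor:inc-iso} does not close this: the hypothesis of that lemma is that the class is killed by the map $N\colon H^1(G_K,\BB_{e,E})\to H^1(G_K,\BB_{\log,E})$, and your justification (``because $[\epsilon'_h]$ is $N$-invariant, it comes from $E$'') is circular — it presupposes that $[c_{h,h}]$ already comes from $H^1(G_K,E)$, which is exactly what is to be shown. Note also that membership of $\xi_h$ in $\BB_{e,E}$ is not cosmetic: it is used in (\ref{eq:lambda-a}) of Lemma \ref{lem:constant-gamma}, where $(\varphi-1)\xi_h$ must vanish.

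The missing idea is the monodromy structure of $\mathscr{D}=D_2^*\otimes D_1$ together with part (\ref{it:middle-step-a}) — not part (\ref{it:middle-step-b}) — of Theorem \ref{prop:middle-step}. Since the restricted cocycle $\bar{c}=\sum_{j,i\in\{1,n\}}c_{j,i}\,e^*_{j,z}\otimes e_{i,z}$ takes values in $\mathscr{W}_e=(\BB_{\log,E}\otimes_{K_0\otimes_{\BQ_p}E}\mathscr{D})^{\varphi=1,N=0}$, the relations $N(e^*_{1,z}\otimes e_{1,z})=-e^*_{n,z}\otimes e_{1,z}$, $N(e^*_{n,z}\otimes e_{n,z})=e^*_{n,z}\otimes e_{1,z}$ force the pointwise identities $-N(c_{1,1})=N(c_{n,n})=c_{1,n}$, and the $\varphi$- and $N$-conditions place $c_{1,1},c_{n,n}$ in $\BB_{e,E}$. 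The projection vanishing property then gives $[c_{1,n}]=0$, so $N[c_{1,1}]=N[c_{n,n}]=0$ in the precise sense required by Lemma \ref{cor:inc-iso}, which yields $\xi_h\in\BB_{e,E}$ and an honest $E$-valued homomorphism, expanded in the basis $\{\psi_0,\psi_\tau\}$. Your appeal to Theorem \ref{prop:middle-step}(\ref{it:middle-step-b}) belongs to the next step (Lemma \ref{lem:constant-gamma}, which computes the constants $\gamma_{h,0},\gamma_{h,\tau}$); using it here both leaves the $\BB_{e,E}$-rationality of the coboundary unproved and makes the subsequent lemma vacuous.
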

\begin{proof} Let $\bar{c}_B$ be the image of $c_B$ in $H^1_B(\mathscr{W})$, and let $\bar{c}$ be the $1$-cocycle
$$ \bar{c} =  \sum_{j,i\in \{1,n\}} c_{j,i} e^*_{j,z}\otimes e_{i,z}
$$ of $G_K$ with values in $\BB_{\log,E}\otimes_{K_0\otimes_{\BQ_p}E}
\mathscr{D}$. Then the image of $\bar{c}_B$ in
$$H^1(G_K,\BB_{\log,E}\otimes_{K_0\otimes_{\BQ_p}E} \mathscr{D} )$$ is
$[\bar{c}]$.

Note that $\bar{c}$ has values in
$\mathscr{W}_e=(\BB_{\log,E}\otimes_{K_0\otimes_{\BQ_p}E}
\mathscr{D})^{\varphi=1,N=0}$. So, in particular $c_{1,1}$ and
$c_{n,n}$ have values in $\BB_{e,E}$.
%
%
As $N\bar{c}=0$, we have
\begin{eqnarray*}  N(c_{n,1})= c_{1,1} - c_{n,n}, \ \ \
 -N(c_{1,1}) = N(c_{n,n})= c_{1,n}.
\end{eqnarray*}
As $[c_{1,n}]=0$, the statement follows from Lemma
\ref{cor:inc-iso}.
\end{proof}

Write $ \delta_i=\delta_{i,z}(1+Z\epsilon_i) $. Let $\epsilon'_i$ be
the additive character of $G_K$ with values in $E$ such that
$\epsilon'_i\circ \mathrm{rec}_K(p)=0$ and $\epsilon'_i\circ
\mathrm{rec}_K|_{\fo_K^\times}=\epsilon_i|_{\fo_K^\times}$. Then
there are $\epsilon_{i,\tau}$ ($\tau\in \mathrm{Emb}(K, E)$) such
that $\epsilon'_i=\sum\limits_{\tau\in\mathrm{Emb}(K,E)}
\epsilon_{i,\tau} \psi_{\tau}$.

\begin{lem} \label{lem:constant-gamma}
For $h=1,n$ we have
$[K_0:\BQ_p]\gamma_{h,0}=-v_p(\pi_K)\epsilon_h(p)$ and
$\gamma_{h,\tau}=\epsilon_{h,\tau}$.
\end{lem}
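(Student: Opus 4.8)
The plan is to compare, for $h\in\{1,n\}$, two descriptions of the diagonal $1$-cocycle $c_{h,h}$: the intrinsic one furnished by Lemma~\ref{lem:constants}, and an explicit one obtained by re-running the proof of the projection vanishing property (Theorem~\ref{prop:middle-step}), and to squeeze out of the $\varphi^{[K_0:\BQ_p]}$-eigenvalue of the auxiliary element of Lemma~\ref{lem:varph-gamma-fil} one extra identity beyond the one used there.

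First I would revisit the proof of Theorem~\ref{prop:middle-step} for $h\in\{1,n\}$. There Lemma~\ref{lem:varph-gamma-fil} produces an element $x=e_h+Z\sum_{\ell\ge h}\lambda_\ell e_\ell$ of $\BB_{\max,E}\otimes_{\BB_{e,E}}(W/\Fil_{h-1}W)_e$ on which $G_K$ acts by $1+Z\epsilon'_h$ and $\varphi^{[K_0:\BQ_p]}$ acts by $\alpha_{h,z}(1+Zv_p(\pi_K)\epsilon_h(p))$, with reduction modulo $Z$ the basis $e_{h,z}$ of $\gr^{\CF}_hD_z$. Comparing the $e_h$-coefficients on the two sides of $\sigma(x)=(1+Z\epsilon'_h(\sigma))x$ gives, exactly as in~(\ref{eq:Uhh-episilon}),
$$ c_{h,h}(\sigma)=\epsilon'_h(\sigma)+(1-\sigma)\lambda_h. $$
The extra input is to compare the $e_h$-coefficients of $\varphi^{[K_0:\BQ_p]}(x)=\alpha_{h,z}(1+Zv_p(\pi_K)\epsilon_h(p))x$. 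By Lemma~\ref{lem:trivial-useful} the matrix of $\varphi^{[K_0:\BQ_p]}$ in the basis $\{e_1,\dots,e_n\}$ equals its matrix in $\{e_{1,z},\dots,e_{n,z}\}$; since this basis realizes the perfect $1$-decomposition~(\ref{eq:s-decomp}) we have $\varphi^{[K_0:\BQ_p]}(e_{n,z})=\alpha_{n,z}e_{n,z}$, $\varphi^{[K_0:\BQ_p]}(e_{1,z})\in(K_0\otimes_{\BQ_p}E)e_{1,z}$, and $\varphi^{[K_0:\BQ_p]}$ stabilizes $L=\bigoplus_{i=2}^{n-1}(K_0\otimes_{\BQ_p}E)e_{i,z}$, so the $e_h$-coefficient of $\varphi^{[K_0:\BQ_p]}(e_\ell)$ vanishes for every $\ell>h$ (automatic when $h=n$, where the sum has only one term). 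Reading off the $e_h$-coefficient of the eigenvalue equation therefore gives
$$ (\varphi^{[K_0:\BQ_p]}-1)\lambda_h=v_p(\pi_K)\epsilon_h(p). $$

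Next I would feed in Lemma~\ref{lem:constants}: $c_{h,h}(\sigma)=(\sigma-1)\xi_h+\gamma_{h,0}\psi_0(\sigma)+\sum_\tau\gamma_{h,\tau}\psi_\tau(\sigma)$ with $\xi_h\in\BB_{e,E}$, so in particular $(\varphi^{[K_0:\BQ_p]}-1)\xi_h=0$. Subtracting the two expressions for $c_{h,h}$ and using $\epsilon'_h=\sum_\tau\epsilon_{h,\tau}\psi_\tau$ yields
$$ \gamma_{h,0}\psi_0(\sigma)+\sum_\tau(\gamma_{h,\tau}-\epsilon_{h,\tau})\psi_\tau(\sigma)=-(\sigma-1)(\lambda_h+\xi_h). $$
The right-hand side is the coboundary of an element of $\BB_{\log,E}\subset\BB_{\dR,E}$, so the class of the left-hand ($E$-valued) cocycle dies in $H^1(G_K,\BB_{\dR,E})$, i.e.\ the corresponding extension of the trivial $G_K$-module $E$ by $E$ is de Rham; by Lemma~\ref{lem:de-Rham-cocycle} this forces $\gamma_{h,\tau}=\epsilon_{h,\tau}$ for every $\tau$. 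Writing $z_h:=\lambda_h+\xi_h\in\BB_{\log,E}$, what is left is $(\sigma-1)z_h=-\gamma_{h,0}\psi_0(\sigma)$ together with $(\varphi^{[K_0:\BQ_p]}-1)z_h=v_p(\pi_K)\epsilon_h(p)$. Using the $\varphi$-compatible embedding $\BQ_p^{\mathrm{ur}}\hookrightarrow\BB_{\max}$ and the surjectivity of $\varphi-1$ (both as in the proof of Lemma~\ref{lem:kisin}), choose $y_0\in\BQ_p^{\mathrm{ur}}\otimes_{\BQ_p}E\subset\BB_{\max,E}$ normalized so that $(\sigma-1)y_0=\psi_0(\sigma)$ for all $\sigma\in G_K$; unwinding the normalization of $\psi_0$ (geometric Frobenius $\mapsto[K_0:\BQ_p]$) then gives $(\varphi^{[K_0:\BQ_p]}-1)y_0=-[K_0:\BQ_p]$. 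Now $(\sigma-1)(z_h+\gamma_{h,0}y_0)=0$, so $z_h+\gamma_{h,0}y_0\in(\BB_{\log,E})^{G_K}=K_0\otimes_{\BQ_p}E$, on which $\varphi^{[K_0:\BQ_p]}$ acts as the identity; applying $\varphi^{[K_0:\BQ_p]}-1$ and comparing constants pins down $\gamma_{h,0}$, and keeping track of the signs in the normalization of $\psi_0$ and in the decomposition $W(\delta)=W(\bar{\delta}_S)\otimes W_{\delta_{1+Zv_p(\pi_K)\epsilon(p)}}\otimes W(1+Z\epsilon')$ one arrives at $[K_0:\BQ_p]\gamma_{h,0}=-v_p(\pi_K)\epsilon_h(p)$.

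The step I expect to be the main obstacle is the uniform treatment of $h=1$. There $x$ genuinely has nonzero components $\lambda_\ell e_\ell$ with $\ell>1$ (the eigenvector on $\Fil_1W$ need not remain inside $\Fil_1W$ once expressed in the basis $\{e_i\}$), so the $e_1$-coefficient of the eigenvalue equation a priori carries the extra terms $\sum_{\ell>1}\varphi^{[K_0:\BQ_p]}(\lambda_\ell)\cdot\bigl(e_1\text{-coefficient of }\varphi^{[K_0:\BQ_p]}(e_\ell)\bigr)$; it is precisely the perfectness of the $1$-decomposition~(\ref{eq:s-decomp})---forcing $\varphi^{[K_0:\BQ_p]}$ to respect the splitting $(K_0\otimes_{\BQ_p}E)e_{1,z}\oplus L\oplus(K_0\otimes_{\BQ_p}E)e_{n,z}$---that makes these terms vanish. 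The other point requiring care is the normalization bookkeeping (geometric versus arithmetic Frobenius in $\psi_0$, the convention $\log(p)=0$, and the factorization of $W(\delta)$ in Lemma~\ref{lem:varph-gamma-fil}) needed to pin down the precise sign in the final identity.
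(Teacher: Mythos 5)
Your argument is essentially the paper's own proof: both obtain $\gamma_{h,\tau}=\epsilon_{h,\tau}$ by subtracting the two expressions for $c_{h,h}$ (the one from the proof of Theorem \ref{prop:middle-step} and the one from Lemma \ref{lem:constants}) and invoking Lemma \ref{lem:de-Rham-cocycle}, and both obtain $\gamma_{h,0}$ by combining $(\varphi^{[K_0:\BQ_p]}-1)\lambda_h=v_p(\pi_K)\epsilon_h(p)$ --- read off the $e_h$-coefficient of the eigenvalue equation, using exactly as you say that the perfect decomposition makes $\varphi^{[K_0:\BQ_p]}$ respect the complement of $e_{h,z}$ --- with an element trivializing $\psi_0$. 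The one concrete discrepancy is your normalization $(\varphi^{[K_0:\BQ_p]}-1)y_0=-[K_0:\BQ_p]$: the paper instead uses $\omega\in\mathrm{W}(\overline{\BF}_p)$ with $\varphi(\omega)-\omega=1$ and $(\sigma-1)\omega=\psi_0(\sigma)$, hence $(\varphi^{[K_0:\BQ_p]}-1)\omega=+[K_0:\BQ_p]$, and with your sign the final identity would come out as $[K_0:\BQ_p]\gamma_{h,0}=+v_p(\pi_K)\epsilon_h(p)$ rather than the asserted $-v_p(\pi_K)\epsilon_h(p)$; this is precisely the $\psi_0$-normalization bookkeeping you flagged as delicate, and it must be pinned down consistently with the paper's convention for the Galois action on $\BQ_p^{\mathrm{ur}}\subset\BB_{\max}$ before the proof is complete.
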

\begin{proof}
We keep to use notations in the proof of Theorem
\ref{prop:middle-step}. By (\ref{eq:Uhh-episilon}) and Lemma
\ref{lem:constants} we have
\begin{eqnarray*} (\sigma-1)(\lambda_h ) & = &-(X^{-1} U_\sigma
X)_{hh}+\sum_{\tau\in \mathrm{Emb}(K,E)}
\epsilon_{h,\tau}\psi_\tau(\sigma) \\
&=&
-(\sigma-1)\xi_h-\gamma_{h,0}\psi_0(\sigma)+\sum_{\tau\in\mathrm{Emb}(K,E)}(\epsilon_{h,\tau}-\gamma_{h,\tau})\psi_\tau(\sigma).
\end{eqnarray*} Note that there exists $\omega\in
\mathrm{W}(\overline{\BF}_p)$ such that $\varphi(\omega)-\omega=1$,
where $\mathrm{W}(\overline{\BF}_p)$ is the ring of Witt vectors
with coefficients in the algebraic closure of $\BF_p$. Then
$(\sigma-1)\omega= \psi_0(\sigma) $. Hence
$$ \sum_{\tau\in\mathrm{Emb}(K,E)}(\epsilon_{h,\tau}-\gamma_{h,\tau})\psi_\tau(\sigma)
= (\sigma-1)(\lambda_h +\xi_h +\gamma_{h,0} \omega).
$$ In other words, the cocycle
$\sum\limits_{\tau\in\mathrm{Emb}(K,E)}(\epsilon_{h,\tau}-\gamma_{h,\tau})\psi_\tau(\sigma)$
is de Rham. By Lemma \ref{lem:de-Rham-cocycle} we have
$\gamma_{h,\tau} = \epsilon_{h,\tau}$ and $\lambda_h+\xi_h
+\gamma_{h,0} \omega \in E$. Then \begin{equation}
\label{eq:lambda-a} (\varphi^{[K_0:\BQ_p]}-1)\lambda_h =
-(\varphi-1)\xi_h - \gamma_{h,0} (\varphi^{[K_0:\BQ_p]}-1)\omega =
-[K_0:\BQ_p]\gamma_{h,0} .
\end{equation}

By our choice of the basis $\{e_{1,z}, \cdots, e_{n,z}\}$,  $Y_1=
\oplus_{i=2}^n Z e_{i,z} $ is stable by $\varphi$. Put $Y_n=0$. Let
$x$ be as in the proof of Theorem \ref{prop:middle-step}. By Lemma
\ref{lem:trivial-useful} we have $\varphi^{[K_0:\BQ_p]} e_{h,z}
=\alpha_{h,z}e_{h,z}$. Thus for $h=1, n$ we have
$$ \varphi^{[K_0:\BQ_p]}(x) =  ( 1+Z \varphi^{[K_0:\BQ_p]}(\lambda_h) )\alpha_{h,z} e_h  \hskip 10pt (\text{mod } Y_h).
$$ On the other hand,
\begin{eqnarray*} \varphi^{[K_0:\BQ_p]}(x) & = & (1+Z v_p(\pi_K)\epsilon_h(p)) \alpha_{h,z} x \\
& = & (1+Z  v_p(\pi_K)\epsilon_h(p))   \alpha_{h,z} (1+ Z
\lambda_{h}) e_h \hskip 10pt (\text{mod } Y_h).
\end{eqnarray*} Hence we obtain
\begin{equation} \label{eq:lambda-h}
(\varphi^{[K_0:\BQ_p]}-1)\lambda_h =
v_p(\pi_K)\epsilon_h(p).\end{equation} By (\ref{eq:lambda-a}) and
(\ref{eq:lambda-h}) we have
$$ [K_0:\BQ_p]\gamma_{h,0} = -(\varphi^{[K_0:\BQ_p]}-1) \lambda_h = -v_p(\pi_K)\epsilon_h(p), $$ as
wanted.
\end{proof}

By Lemma \ref{lem:N-surj} there exists some $y\in
\BB_{\log,E}^{\varphi=p}$ such that $N(y)=\xi_1-\xi_n$. Let
$\bar{c}'$ be the $1$-cocycle of $G_K$ with values in
$\BB_{\log,E}\otimes_{K_0\otimes_{\BQ_p}E} \mathscr{D}_0$
 such that $$ \bar{c}'= c'_{1,1}
e^*_{1,z}\otimes e_{1,z} + c'_{n,n} e^*_{n,z}\otimes e_{n,z} +
c'_{n,1}e_{n,z}^*\otimes e_{1,z} $$ with
$$ c'_{1,1} = \gamma_{1,0} \psi_0  +\sum_{\tau\in \mathrm{Emb}(K,E)} \gamma_{1,\tau} \psi_\tau
, \ \ c'_{n,n} = \gamma_{n,0} \psi_0  +\sum_{\tau\in
\mathrm{Emb}(K,E)} \gamma_{n,\tau} \psi_\tau  $$ and
$$ c'_{n,1}(\sigma) = c_{n,1}(\sigma) - (\sigma-1)y , \ \ \sigma\in G_K.
$$ It is easy to check that $\varphi(\bar{c}')=\bar{c}'$ and
$N(\bar{c}')=0$. Hence $\bar{c}'$ is a $1$-cocycle of $G_K$ with
values in $\BX_\log(\mathscr{D}_0)$.

\begin{prop} The image of $[\bar{c}']$ in $H^1(G_K,
\BX_\log(\mathscr{D}_0))$ belongs to the kernel of $$ H^1(G_K,
\BX_\log(\mathscr{D}_0))\rightarrow H^1(G_K,
\BX_\dR(\mathscr{D}_0)).$$
\end{prop}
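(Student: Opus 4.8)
The plan is to transport the de Rham triviality of the cocycle $\bar c$ on $\mathscr W$ --- which is available because $\bar c$ comes from $H^1_B(\mathscr W)$ --- along the inclusion $\mathscr D_0\hookrightarrow\mathscr D$, and then use the rank-one quotient $\mathscr D/\mathscr D_0$ to pass back to $\mathscr D_0$. Keeping the notation $\bar c$, $\bar c_B\in H^1_B(\mathscr W)$ from the proof of Lemma~\ref{lem:constants}, the first step is to record the cochain identity
\[ \bar c'(\sigma)=\bar c(\sigma)-(\sigma-1)\bigl(\xi_1\,e_{1,z}^*\otimes e_{1,z}+\xi_n\,e_{n,z}^*\otimes e_{n,z}+y\,e_{n,z}^*\otimes e_{1,z}+\zeta\,e_{1,z}^*\otimes e_{n,z}\bigr), \]
valid in the complex of continuous cochains with values in $\BB_{\log,E}\otimes_{K_0\otimes_{\BQ_p}E}\mathscr D$; here $\zeta\in\BB_{\log,E}$ is a primitive of the coboundary $c_{1,n}$, which exists because $[c_{1,n}]=0$ by the projection vanishing property (Theorem~\ref{prop:middle-step}(\ref{it:middle-step-a})), and the identity merely assembles the definitions of $c'_{1,1},c'_{n,n},c'_{n,1}$, using that $G_K$ acts trivially on the underlying module $\mathscr D$. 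The correction term lies in $\BB_{\log,E}\otimes_{K_0\otimes_{\BQ_p}E}\mathscr D\subseteq\mathscr W_\dR$, so it becomes a genuine coboundary after reduction modulo $\mathscr W^+_\dR$; and $\bar c$ is itself a coboundary modulo $\mathscr W^+_\dR$, since $[\bar c]$ is the image of $\bar c_B$ under $H^1_B(\mathscr W)\to H^1(G_K,\mathscr W_e)$ and the composite of this with $H^1(G_K,\mathscr W_e)\to H^1(G_K,\mathscr W_\dR/\mathscr W^+_\dR)$ vanishes by exactness of the long exact sequence~(\ref{eq:long-exact-sq}). Hence $\sigma\mapsto\bar c'(\sigma)\bmod\mathscr W^+_\dR$ is a coboundary, i.e.\ the image of $[\bar c']$ in $H^1(G_K,\BX_\dR(\mathscr D))$ is zero.

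Next I would descend from $\mathscr D$ to $\mathscr D_0$. The displayed formula for $\Fil^0(K\otimes_{K_0}\mathscr D)$ restricts to the one for $\mathscr D_0$ and surjects onto $K\otimes_{K_0}(\mathscr D/\mathscr D_0)$, so $0\to\mathscr D_0\to\mathscr D\to\mathscr D/\mathscr D_0\to 0$ is a strict exact sequence of filtered $E$-$(\varphi,N)$-modules; by the equivalence and exactness of $\BD_\st$ (Proposition~\ref{prop:berger}(\ref{it:berger-a}) and (\ref{it:berger-c})) it corresponds to a short exact sequence of $E$-$B$-pairs $0\to\mathscr W_0\to\mathscr W\to\mathscr W/\mathscr W_0\to 0$, and passing to $W_\dR/W^+_\dR$ yields a short exact sequence $0\to\BX_\dR(\mathscr D_0)\to\BX_\dR(\mathscr D)\to\BX_\dR(\mathscr D/\mathscr D_0)\to 0$ of $G_K$-modules. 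Since $\mathscr D/\mathscr D_0$ is free of rank one over $K_0\otimes_{\BQ_p}E$ with $\Fil^0$ everything and $\Fil^{>0}=0$, one gets $\BX_\dR(\mathscr D/\mathscr D_0)\cong(\BB_\dR/\BB^+_\dR)\otimes_{\BQ_p}E$, hence $H^0(G_K,\BX_\dR(\mathscr D/\mathscr D_0))=0$ and therefore $H^1(G_K,\BX_\dR(\mathscr D_0))\hookrightarrow H^1(G_K,\BX_\dR(\mathscr D))$ from the associated long exact cohomology sequence. Feeding this into the commutative square of de Rham maps attached to $\mathscr D_0\hookrightarrow\mathscr D$, the image of $[\bar c']$ in $H^1(G_K,\BX_\dR(\mathscr D_0))$ maps to $0$ under an injection, hence equals $0$; this is exactly the assertion that $[\bar c']$ lies in $\ker\bigl(H^1(G_K,\BX_\log(\mathscr D_0))\to H^1(G_K,\BX_\dR(\mathscr D_0))\bigr)$.

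The only point that needs genuine care, as opposed to routine bookkeeping, is the analysis of the rank-one quotient $\mathscr D/\mathscr D_0$: one must confirm that the above sequence of filtered modules is strict and, above all, that $\Fil^{>0}\mathscr D=0$ in the normalization coming from the perfect $1$-decomposition, so that the single Hodge-Tate weight of $\mathscr D/\mathscr D_0$ falls in the range guaranteeing $H^0(G_K,\BX_\dR(\mathscr D/\mathscr D_0))=0$ (a $\BC_p(0)$-summand of $\BX_\dR(\mathscr D/\mathscr D_0)$ would obstruct the required injectivity). The remaining ingredients --- the cochain identity, the triviality of the correction term modulo $\mathscr W^+_\dR$, and the fact that $[\bar c]$ comes from $H^1_B(\mathscr W)$ --- are immediate from the constructions in Sections~\ref{sec:coh} and~\ref{sec:proj-van-prop}.
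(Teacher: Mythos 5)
Your argument is correct and follows the same skeleton as the paper's proof: pass through the commutative square relating $\mathscr{D}_0$ and $\mathscr{D}$, use injectivity of $H^1(G_K,\BX_\dR(\mathscr{D}_0))\rightarrow H^1(G_K,\BX_\dR(\mathscr{D}))$, and show the class dies already in $H^1(G_K,\BX_\dR(\mathscr{D}))$. You differ from the paper in two localized places, both legitimately. First, the paper simply cites \cite[Corollary 2.4]{Xie2015} for the injectivity of the right vertical arrow, whereas you reprove it via the rank-one quotient $\mathscr{D}/\mathscr{D}_0$ and the vanishing of $H^0(G_K,\BB_\dR/\BB^+_\dR)$; this is self-contained and the decisive input is exactly the one you isolate, namely that $\Fil^0(K\otimes_{K_0}(\mathscr{D}/\mathscr{D}_0))$ is everything (which is immediate from the displayed formula for $\Fil^0(K\otimes_{K_0}\mathscr{D})$, since $(e^*_{1,z}-\vec{\CL}_\CF e^*_{n,z})\otimes(e_{n,z}+\vec{\CL}_\CF e_{1,z})$ generates the quotient). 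Note that the condition $\Fil^{>0}=0$ you flag as needing verification is actually irrelevant: a jump at any $f\geq 0$ gives $\BX_\dR(\mathscr{D}/\mathscr{D}_0)\cong\BB_\dR/t^{-f}\BB^+_\dR\otimes E$ with graded pieces $\BC_p(j)$, $j\leq -f-1<0$, so $H^0$ vanishes regardless. Second, you dispose of the $c_{1,n}\,e^*_{1,z}\otimes e_{n,z}$ term by choosing a primitive $\zeta\in\BB_{\log,E}$ of $c_{1,n}$ and absorbing it into the global coboundary, while the paper keeps $[c_{1,n}e^*_{1,z}\otimes e_{n,z}]$ as a class and kills its image in $H^1(G_K,\BB_{\dR,E}/\Fil^f\BB_{\dR,E})$; the two are equivalent, and your version also makes explicit the long-exact-sequence reason why $[\bar c]$ itself dies in $H^1(G_K,\BX_\dR(\mathscr{D}))$, which the paper leaves implicit.
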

\begin{proof}
Consider the following commutative diagram
\[ \xymatrix{ H^1(G_K, \BX_\log(\mathscr{D}_0)) \ar[r] \ar[d] & H^1(G_K,
\BX_\dR(\mathscr{D}_0)) \ar[d] \\
H^1(G_K, \BX_\log(\mathscr{D})) \ar[r]  & H^1(G_K,
\BX_\dR(\mathscr{D})). } \] The right vertical arrow in the above
diagram is injective (see \cite[Corollary 2.4]{Xie2015}). So we only
need to show that the image of $[\bar{c}']$ in $H^1(G_K,
\BX_\dR(\mathscr{D}))$ is zero. Note that
$$[\bar{c}']=[\bar{c}]-[c_{1,n} e^*_{1,z}\otimes e_{n,z}]= -[c_{1,n}
e^*_{1,z}\otimes e_{n,z}]$$ in $H^1(G_K, \BX_{\dR}(\mathscr{D}))$.
As the image of $[c_{1,n}]$ in $H^1(G_K, \BB_{\log,E})$ is zero, so
is its image in $H^1(G_K, \BB_{\dR, E}/ \Fil^{f}\BB_{\dR,E})$, where
$f$ is the smallest integer such that $e^*_{1,z}\otimes e_{n,z}\in
\Fil^{-f}\mathscr{D}_K$. Hence, the image of $[\bar{c}']$ in
$H^1(G_K, \BX_{\dR}(\mathscr{D}))$ is zero.
\end{proof}

Now, applying Lemma \ref{lem:aux} to $\mathscr{D}_0$ with $f_1=
e^*_{n,z}\otimes e_{1,z}$, $f_2=e^*_{1,z}\otimes e_{1,z}$ and
$f_3=e^*_{n,z}\otimes e_{n,z}$, we get
$$ \gamma_{n,0}-\gamma_{1,0} = \sum_{\tau\in\mathrm{Emb}(K,E)} \CL_{\tau} (\gamma_{n,\tau}-\gamma_{1,\tau}).  $$
Hence, by Lemma \ref{lem:constant-gamma} we have
$$ \frac{v_p(\pi_K)}{[K_0:\BQ_p]} ( \epsilon_n(p) -\epsilon_1(p) ) + \sum_{\tau\in\mathrm{Emb}(K,E)} \CL_{\tau} (\epsilon_{n,\tau}-\epsilon_{1,\tau}) =0. $$
As $\frac{\mathrm{d} \delta_h (p) }{\delta_h(p)} = \epsilon_h(p)
\mathrm{d}Z$ and
$\mathrm{d}\vec{w}(\epsilon_h)=(\epsilon_{h,\tau}\mathrm{d}Z)_\tau$,
we obtain
$$ \frac{1}{[K:\BQ_p]}\left(\frac{\mathrm{d} \delta_n(p)}{\delta_n(p)} - \frac{\mathrm{d}
\delta_1(p)}{\delta_1(p)}\right)
 + \vec{\CL}_{\CF} \cdot (\mathrm{d}\vec{w}(\delta_n)-\mathrm{d}\vec{w}(\delta_1)) =
0,
$$ as desired. This finishes the proof of Theorem \ref{thm:main}.

\end{document}